\documentclass[11pt,a4paper]{article}

\usepackage[T1]{fontenc}
\usepackage[utf8]{inputenc}
\usepackage{newtxtext}
\usepackage{mathtools,amssymb,amsthm}
\usepackage{newtxmath}
\usepackage[final]{microtype}
\usepackage[
  a4paper,
  top=23mm,
  bottom=24mm,
  inner=24mm,
  outer=24mm,
  headheight=22pt
]{geometry}
\usepackage[dvipsnames]{xcolor}
\usepackage{setspace}
\usepackage{enumitem}
\usepackage{titlesec}
\usepackage{fancyhdr}
\usepackage{fancyvrb}
\usepackage{booktabs}
\usepackage{hyperref}
\usepackage{bookmark}
\usepackage[nameinlink,noabbrev]{cleveref}

\definecolor{ink}{HTML}{20252B}
\definecolor{accent}{HTML}{315A74}
\definecolor{rulegray}{HTML}{AAB4BC}
\definecolor{questionblue}{HTML}{3E78A8}
\definecolor{questionback}{HTML}{F4F8FC}
\definecolor{answergreen}{HTML}{4F8A63}
\definecolor{answerback}{HTML}{F4FAF5}
\definecolor{aiyellow}{HTML}{D3A13A}
\definecolor{aiback}{HTML}{FFF9E9}
\definecolor{aititle}{HTML}{F2CD72}
\color{ink}

\hypersetup{
  colorlinks=true,
  linkcolor=accent,
  citecolor=accent,
  urlcolor=accent,
  pdftitle={On Some More Problems from the Kourovka Notebook},
  pdfauthor={Vasily Ionin and Artem Semidetnov}
}

\setlist[enumerate]{
  label=\textcolor{accent}{\arabic*.},
  leftmargin=2.15em,
  itemsep=0.35em,
  topsep=0.35em
}

\titleformat{\section}
  {\large\bfseries\color{accent}}
  {\thesection}{0.65em}{}
\titleformat{\subsection}
  {\normalsize\bfseries\color{ink}}
  {\thesubsection}{0.65em}{}
\titlespacing*{\section}{0pt}{3.2ex plus 0.8ex minus 0.4ex}{1.0ex}
\titlespacing*{\subsection}{0pt}{1.8ex plus 0.5ex minus 0.2ex}{0.6ex}

\renewcommand{\headrulewidth}{0.35pt}
\renewcommand{\headrule}{\hbox to\headwidth{\color{rulegray}\leaders\hrule height \headrulewidth\hfill}}

\newtheorem{theorem}{Theorem}[section]
\newtheorem{proposition}[theorem]{Proposition}
\newtheorem{lemma}[theorem]{Lemma}

\theoremstyle{definition}

\theoremstyle{remark}

\newcommand{\statementbox}[3]{%
  \par\addvspace{1.1em}%
  \begingroup
  \setlength{\fboxsep}{8pt}%
  \setlength{\fboxrule}{0.8pt}%
  \noindent\fcolorbox{#1}{#2}{%
    \begin{minipage}{\dimexpr\linewidth-2\fboxsep-2\fboxrule\relax}
      #3
    \end{minipage}}%
  \par\endgroup
  \addvspace{1.1em}%
}

\NewDocumentEnvironment{Question}{o +b}{%
  \statementbox{questionblue}{questionback}{%
    \refstepcounter{theorem}%
    \noindent{\bfseries Question~\thetheorem
      \IfValueT{#1}{\ {\normalfont(#1)}}.}\enspace
    #2}%
}{}

\NewDocumentEnvironment{Answer}{o +b}{%
  \statementbox{answergreen}{answerback}{%
    \refstepcounter{theorem}%
    \noindent{\bfseries Answer~\thetheorem
      \IfValueT{#1}{\ {\normalfont(#1)}}.}\enspace
    #2}%
}{}

\NewDocumentEnvironment{aipractices}{+b}{%
  \refstepcounter{subsection}%
  \addcontentsline{toc}{subsection}{%
    \protect\numberline{\thesubsection}AI best practices}%
  \par\addvspace{2.0em}%
  \begingroup
  \setlength{\fboxsep}{8pt}%
  \setlength{\fboxrule}{0.8pt}%
  \noindent\fcolorbox{aiyellow}{aiback}{%
    \begin{minipage}{\dimexpr\linewidth-2\fboxsep-2\fboxrule\relax}
      \begingroup
      \setlength{\fboxsep}{4pt}%
      \noindent\colorbox{aititle}{%
        \parbox{\dimexpr\linewidth-2\fboxsep\relax}{%
          \bfseries\color{ink}\thesubsection\quad AI best practices}}%
      \endgroup
      \medskip
      #1
    \end{minipage}}%
  \par\endgroup
  \addvspace{2.3em}%
}{}

\makeatletter
\renewenvironment{abstract}{%
  \small\quotation
  \setlength{\leftskip}{0.8em}%
  \setlength{\rightskip}{0.8em}%
  \noindent
  {\bfseries\color{accent}Abstract.}\enspace\ignorespaces
}{%
  \endquotation
}
\makeatother

\begin{document}

\thispagestyle{plain}
\begin{center}
  {\fontsize{18}{21}\selectfont\bfseries
    On Some More Problems from the Kourovka Notebook\par}
  \vspace{0.45em}
  \begin{minipage}[t]{0.47\textwidth}
    \centering
    {\normalsize Vasily Ionin\par}
    \vspace{0.12em}
    {\footnotesize
      Saint Petersburg Department of\par
      Steklov Mathematical Institute\par
      27 Fontanka, St.~Petersburg, 191023, Russia\par
      \href{mailto:ionin.code@gmail.com}{\nolinkurl{ionin.code@gmail.com}}\par}
  \end{minipage}\hfill
  \begin{minipage}[t]{0.47\textwidth}
    \centering
    {\normalsize Artem Semidetnov\par}
    \vspace{0.12em}
    {\footnotesize
      Universit\'{e} de Gen\`{e}ve, Section de math\'{e}matiques\par
      rue du Conseil-G\'{e}n\'{e}ral 7--9, 1205 Gen\`{e}ve, Switzerland\par
      \href{mailto:artemsemidetnov@gmail.com}{\nolinkurl{artemsemidetnov@gmail.com}}\par
      \href{mailto:Artem.Semidetnov@etu.unige.ch}{\nolinkurl{Artem.Semidetnov@etu.unige.ch}}\par}
  \end{minipage}
\end{center}

\begin{abstract}
We give solutions to several problems in combinatorial group theory recorded
as open in the literature.  Our main source is the Kourovka Notebook, although
we also consider questions from elsewhere.  All solutions were found with the
assistance of large language models and checked by the authors.
\end{abstract}

\vspace{-0.3em}

\makeatletter
\noindent\hspace*{0.09\textwidth}%
\begin{minipage}{0.82\textwidth}
\begingroup
\small
\setcounter{tocdepth}{1}
\renewcommand*\l@section{\@dottedtocline{1}{0em}{1.6em}}
\tableofcontents
\endgroup
\end{minipage}
\par
\makeatother

\section*{Introduction}
\phantomsection
\addcontentsline{toc}{section}{Introduction}

This paper presents solutions to problems explicitly identified as open in the
literature.  Most are drawn from the Kourovka Notebook \cite{kourovka21}, but
some originate elsewhere.  The title of this paper echoes
\cite{vandoorn-et-al}, a recent paper on open problems in group theory that
attracted attention for its use of language models.

Every solution presented here was found with the assistance of artificial
intelligence.  The systems used were Codex with \mbox{ChatGPT~5.6~Sol} and
Claude Desktop with \mbox{Opus~5}.  They suggested proof strategies, located relevant results,
designed searches, and tested candidate constructions.  The authors then
checked every argument and computation independently.

\subsection*{Motivation}

Our motivation is twofold.

\begin{enumerate}
  \item We hope these solutions will be useful to researchers who considered
  the same questions without resolving them.  In several cases, the solution
  requires a finite computer search, an appropriate reference, or a short
  structural argument.  Publishing verified solutions may also improve the mathematical
  corpus on which later reasoning systems are trained.

  \item While searching for the solutions, we developed and compared workflows
  for interacting with large language models.  Some proved substantially more
  productive than others.  In the relevant sections, we state explicitly which
  practices were useful.
\end{enumerate}

We would be pleased if any of these solutions were to prompt further work or
advance an existing line of research.

\subsection*{Organization}

Each section treats one problem and may be read as a self-contained research
note: it gives the necessary background, states the question, and supplies
either a proof or an explicit construction with verification.  Every solution
has been checked manually, and the authors take responsibility for its
correctness.  The paper is organized as follows.

\begin{itemize}[
  label=\textcolor{accent}{\textbullet},
  leftmargin=1.45em,
  itemsep=0.15em,
  topsep=0.3em
]
  \item In Section~\ref{sec:problem-14-85}, we answer Problem~14.85
  \cite{kourovka21} affirmatively.
  \item In Section~\ref{sec:problem-19-94}, we answer Problem~19.94
  \cite{kourovka21} negatively.
  \item In Section~\ref{sec:problem-16-11}, we answer Problem~16.11
  \cite{kourovka21} negatively.
  \item In Section~\ref{sec:problem-17-47}, we answer Problem~17.47
  \cite{kourovka21} negatively.
  \item In Section~\ref{sec:question-2-8-14}, we answer Question~2.8.14
  \cite{LintonNybergBrodda2025} negatively.
  \item In Section~\ref{sec:neshchadim}, we answer Problem~18
  \cite{BardakovEtAl2015} affirmatively.
  \item In Section~\ref{sec:plotkin-stable-representations}, we answer
  Problem~4.3.5 \cite{Plotkin1977} affirmatively.
  \item In Section~\ref{sec:problem-17-32}, we answer
  Problem~17.32 \cite{kourovka21} negatively.
\end{itemize}

\subsection*{Acknowledgements}

The authors warmly thank Roman Mikhailov for his support and valuable
discussions.

\vspace{1.6\baselineskip}
We write \(G'=[G,G]\) for the \emph{commutator subgroup} (or \emph{derived
subgroup}) of a group \(G\).

\section{Primitive-preserving endomorphisms of free metabelian groups}
\label{sec:problem-14-85}

Let \(F_n\) be the free group of rank \(n\), and let

\[
M_n=F_n/F_n''
\]
be the \emph{free metabelian group} of rank \(n\).  An element is called
\emph{primitive} if it belongs to a free basis.  An endomorphism is
\emph{primitive-preserving} if it maps every primitive element to a primitive
element.

For free groups, every primitive-preserving endomorphism is an automorphism
\cite{Lee2002}.  For free metabelian groups, the same conclusion was known for
\(n\leq2\) \cite{GuptaTimoshenko1997}.  It was also known in every finite rank
under the stronger assumption that the endomorphism maps every tuple
extendable to a free basis to another such tuple \cite{Timoshenko2015}.  The
single-element question remained open.

\begin{Question}[{\cite[Problem~14.85]{kourovka21}}]
Suppose that an endomorphism \(\varphi\colon M_n\to M_n\) maps every primitive
element to a primitive element.  Must \(\varphi\) be an automorphism?
\end{Question}

This question also appears as Problem~(M0) in the online problem list
\cite{TimoshenkoShpilrain2026}.

\begin{Answer}
\label{ans:problem-14-85}
Yes.  Every primitive-preserving endomorphism of \(M_n\) is an automorphism.
\end{Answer}

\subsection{Solution}

The proof has three steps.
\begin{enumerate}[
  label=\textcolor{accent}{\arabic*.},
  leftmargin=2.15em,
  itemsep=0.15em,
  topsep=0.25em
]
  \item First we show that \(\varphi^{\mathrm{ab}}\) is an
  automorphism.  After composing with an automorphism induced from \(F_n\), we
  reduce to an endomorphism \(\psi\) satisfying
  \(\psi^{\mathrm{ab}}=\operatorname{id}\), with \(\psi\) an automorphism if
  and only if \(\varphi\) is.

  \item Fox identities reduce the problem to proving that the
  Jacobian \(J(\psi)\) is invertible.

  \item If \(\det J(\psi)\) were not a unit, reduction modulo a
  maximal ideal containing it would produce a nonzero left kernel.  A suitable
  primitive element has a specialized Fox row in this kernel, forcing its image
  under \(\psi\) to have a non-unimodular Fox row and hence to be
  nonprimitive.
\end{enumerate}

Fix a free basis \(x_1,\ldots,x_n\) of \(M_n\).  Put

\[
M_n^{\mathrm{ab}}=M_n/M_n'=F_n^{\mathrm{ab}},\qquad
R=\mathbb Z[M_n^{\mathrm{ab}}]
=\mathbb Z[t_1^{\pm1},\ldots,t_n^{\pm1}],\qquad
b=(t_1-1,\ldots,t_n-1)^{\mathsf T}
\in R^{n\times1},
\]
where \(t_i\) is the image of \(x_i\) in \(M_n^{\mathrm{ab}}\).  The Fox derivatives on
\(\mathbb Z[F_n]\), with respect to the basis
\(x_1,\ldots,x_n\), are recalled in \cite{Fox1953,MKS1976}.  For
\(u\in M_n\), write

\[
\partial_i u:=\overline{\frac{\partial u}{\partial x_i}}\in R,
\]

\noindent where the bar denotes the image under
\(\mathbb Z[F_n]\to\mathbb Z[M_n^{\mathrm{ab}}]=R\), and the expression on the right is
computed using any lift of \(u\) to \(F_n\).  These derivatives define the
map

\[
\begin{aligned}
D\colon M_n&\longrightarrow R^{1\times n},\\
u&\longmapsto
\left(\partial_1u,\ldots,\partial_nu\right).
\end{aligned}
\]

A row over \(R\) is \emph{unimodular} if its entries generate the unit ideal.

For \(\varphi\in\operatorname{End}(M_n)\), let

\[
J(\varphi)=\left(\partial_j\varphi(x_i)\right)_{i,j}
\]
be its \emph{metabelian Fox Jacobian}.  We use \(\varphi^{\mathrm{ab}}\) for
the endomorphisms induced on \(M_n^{\mathrm{ab}}\) and on~\(R\).

We record standard Fox-calculus facts needed in the proof.

\begin{proposition}
\label{prop:fox-bachmuth}
For \(u\in M_n\) and \(\varphi\in\operatorname{End}(M_n)\), the following hold.
\begin{enumerate}[label=\textup{(\roman*)}]
  \item \(D(u)b=\overline{u}-1\), where \(\overline{u}\) is the image of \(u\)
  in \(M_n^{\mathrm{ab}}\).
  \item If \(u\) is primitive, then \(D(u)\) is unimodular over \(R\).
  \item The endomorphism \(\varphi\) is an automorphism if and only if
  \(J(\varphi)\) is invertible over \(R\).
\end{enumerate}
Moreover,

\[
D(\varphi(u))=\varphi^{\mathrm{ab}}(D(u))J(\varphi).
\]
\end{proposition}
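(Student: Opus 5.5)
We prove the statements in the order: well-definedness of $D$, the chain rule, then (i), (ii), (iii).

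\textbf{Well-definedness of $D$.} Since $D$ is computed from a lift to $F_n$, we first check that $\partial_i u$ does not depend on the lift. Two lifts differ by an element $w\in F_n''$. By the product rule,
\[
\frac{\partial (vw)}{\partial x_i}=\frac{\partial v}{\partial x_i}+v\frac{\partial w}{\partial x_i}.
\]
So it suffices that $\overline{\partial w/\partial x_i}=0$ in $R$ for $w\in F_n''$. For a commutator $[a,c]$ with $a,c\in F_n'$,
\[
\frac{\partial[a,c]}{\partial x_i}=(1-aca^{-1})\frac{\partial a}{\partial x_i}+(a-aca^{-1}c^{-1})\frac{\partial c}{\partial x_i},
\]
and the coefficients $1-aca^{-1}$ and $a-aca^{-1}c^{-1}$ map to $0$ in $\mathbb Z[F_n^{\mathrm{ab}}]$ because $a,c\in F_n'$. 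Products and inverses of such commutators then vanish by the product rule.

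\textbf{Chain rule.} Lift $\varphi$ to an endomorphism $\tilde\varphi$ of $F_n$ with $\tilde\varphi(x_i)$ a lift of $\varphi(x_i)$. The classical Fox chain rule gives
\[
\frac{\partial \tilde\varphi(u)}{\partial x_j}=\sum_i \tilde\varphi\!\left(\frac{\partial u}{\partial x_i}\right)\frac{\partial\tilde\varphi(x_i)}{\partial x_j}.
\]
Projecting to $R$ turns $\tilde\varphi$ on $\mathbb Z[F_n]$ into $\varphi^{\mathrm{ab}}$ on $R$, which yields the displayed formula $D(\varphi(u))=\varphi^{\mathrm{ab}}(D(u))J(\varphi)$.

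\textbf{(i).} Project the fundamental formula $\sum_i \frac{\partial u}{\partial x_i}(x_i-1)=u-1$ to $R$; the left side becomes $D(u)b$ and the right side becomes $\overline u-1$.

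\textbf{(ii).} Extend the primitive element $u$ to a free basis $u=u_1,\dots,u_n$, and let $\alpha$ be the automorphism sending $x_i\mapsto u_i$. Then $D(u)=D(\alpha(x_1))$ is the first row of $J(\alpha)$. Once (iii) is established, $J(\alpha)$ is invertible, so its first row is unimodular. More directly, applying the chain rule to $\alpha\circ\alpha^{-1}=\mathrm{id}$ gives
\[
J(\mathrm{id})=I=\alpha^{\mathrm{ab}}\bigl(J(\alpha^{-1})\bigr)J(\alpha),
\]
so $J(\alpha)$ has a left inverse over the commutative ring $R$ and is therefore invertible.

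\textbf{(iii), forward direction.} This is exactly the computation just made: if $\varphi$ is an automorphism, the chain rule applied to $\varphi\circ\varphi^{-1}$ and $\varphi^{-1}\circ\varphi$ shows $J(\varphi)$ is invertible.

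\textbf{(iii), converse.} This is the main obstacle. It is Bachmuth's theorem, and I would cite \cite{Bachmuth1965} (or Birman's inverse function theorem, adapted to the metabelian setting) rather than reprove it in full. The sketch I would give uses the Magnus embedding, under which $M_n$ embeds in
\[
\begin{pmatrix} M_n^{\mathrm{ab}} & \bigoplus_i R e_i\\ 0&1\end{pmatrix},\qquad u\mapsto\begin{pmatrix}\overline u & D(u)\\0&1\end{pmatrix}.
\]
The image consists precisely of the pairs $(g,v)$ with $vb=g-1$, by (i) and the exactness result of Bachmuth/Magnus.

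\emph{Surjectivity.} First, $\det J(\varphi)$ being a unit forces $\varphi^{\mathrm{ab}}$ to be invertible. Take the augmentation $t_i\mapsto1$, which sends $J(\varphi)$ to the integer matrix of $\varphi^{\mathrm{ab}}$; this matrix then has unit determinant, so $\varphi^{\mathrm{ab}}\in GL_n(\mathbb Z)$. Compose $\varphi$ with an automorphism lifting $(\varphi^{\mathrm{ab}})^{-1}$, so we may assume $\varphi^{\mathrm{ab}}=\mathrm{id}$. Then $\varphi(M_n)M_n'=M_n$. The chain rule says $D\circ\varphi=D\cdot J(\varphi)$. Because $J(\varphi)$ is invertible, every element of $M_n'$ — identified with $\{v: vb=0\}$ — is hit modulo the image computed via the Magnus description, and this gives $\varphi(M_n)\supseteq M_n'$.

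\emph{Injectivity.} This follows since $D$ together with the abelianization is injective on $M_n$ and $J(\varphi)$ is invertible.
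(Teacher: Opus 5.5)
Your proposal is correct and follows the paper's route: (i) and the chain rule come from the standard Fox identities, (ii) holds because $D(u)$ is a row of the invertible Jacobian $J(\alpha)$ of an automorphism, and the converse of (iii) is cited from Bachmuth; your lift-independence check and your chain-rule proof that $J(\alpha)$ is invertible make explicit what the paper leaves to the references. In your optional sketch of Bachmuth's converse, the step giving $\varphi(M_n)\supseteq M_n'$ rests on two facts worth stating rather than appealing to invertibility of $J(\varphi)$ alone: first, $J(\varphi)b=b$ (from (i) after your IA reduction), so that $v\mapsto vJ(\varphi)^{-1}$ maps $\{v: vb=0\}$ into itself; second, the Magnus embedding is injective.
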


\begin{proof}
The first identity and the chain rule are the standard Fox identities
\cite{Fox1953}.  If \(u\) is primitive, it is the first member of a basis, and
its Fox row is therefore a row of an invertible Jacobian.  Item~(iii) is
Bachmuth's inverse-function theorem \cite{Bachmuth1965}.
\end{proof}

An endomorphism is an \emph{IA-endomorphism} if it induces the identity on
\(M_n^{\mathrm{ab}}\).  For an IA-endomorphism \(\varphi\), the preceding
identities give

\begin{equation}
\label{eq:ia-fox}
D(\varphi(u))=D(u)J(\varphi),\qquad J(\varphi)b=b.
\end{equation}

An integer vector is \emph{primitive} if its coordinates are relatively prime.
We first control the induced map on abelianization.

\begin{lemma}
\label{lem:integer-primitivity}
Let \(B\in \operatorname{Mat}_{n\times n}(\mathbb Z)\).  Suppose that \(Bv\) is a primitive integer vector
whenever \(v\in\mathbb Z^n\) is primitive.  Then
\(B\in\operatorname{GL}_n(\mathbb Z)\).
\end{lemma}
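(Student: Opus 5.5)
We will show that \(\det B=\pm1\), using the Smith normal form to reduce to a diagonal matrix.

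First, \(B\) is nonsingular over \(\mathbb Q\). If \(Bv_0=0\) for some nonzero rational \(v_0\), we may clear denominators and divide by the gcd to make \(v_0\) a primitive integer vector. Then \(Bv_0=0\) is not primitive, because the zero vector has gcd \(0\). This contradicts the hypothesis, so \(\det B\neq0\).

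Next, write \(B=PDQ\) with \(P,Q\in\operatorname{GL}_n(\mathbb Z)\) and \(D=\operatorname{diag}(d_1,\ldots,d_n)\), where each \(d_i\) is a positive integer. Matrices in \(\operatorname{GL}_n(\mathbb Z)\) send primitive vectors bijectively to primitive vectors, since the gcd of the coordinates of \(Mv\) divides, and is divisible by, the gcd of the coordinates of \(v\). So \(D\) inherits the hypothesis:
\[
D(Qv)=P^{-1}Bv \quad\text{is primitive for every primitive } v,
\]
and \(Qv\) ranges over all primitive vectors as \(v\) does.

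Now suppose some \(d_k>1\), and choose a prime \(p\mid d_k\). The vector \(v=e_k+\sum_{i\neq k} c_ie_i\) is primitive because its \(k\)-th coordinate is \(1\). We choose the \(c_i\) so that \(p\) divides every coordinate of \(Dv\). For \(i\neq k\), take \(c_i=p\), so \(d_ic_i\) is divisible by \(p\). The \(k\)-th coordinate is \(d_k\), which \(p\) divides by choice. Hence \(Dv\in p\mathbb Z^n\) is not primitive, a contradiction. For \(n=1\), simply take \(v=e_1\), giving \(Dv=d_1\), which is not primitive when \(d_1>1\). Therefore all \(d_i=1\), so \(\det B=\pm1\) and \(B\in\operatorname{GL}_n(\mathbb Z)\).

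A quicker route avoids Smith normal form. Over \(\mathbb F_p\), if \(\bar B\) is singular, take a nonzero kernel vector and lift it to an integer vector with some coordinate equal to \(1\), which is primitive. Its image under \(B\) lies in \(p\mathbb Z^n\), so it is not primitive. Hence \(\det B\) is nonzero modulo every prime \(p\), and together with \(\det B\neq0\) this gives \(\det B=\pm1\). This is the route we would actually write up.

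We expect no serious obstacle. The only point needing care is that primitivity is preserved when a lifted vector is adjusted. Including a coordinate equal to \(1\) handles this.
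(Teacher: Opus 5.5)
Your proof is correct, and the route you say you would write up is essentially the paper's: reduce modulo a prime \(p\) dividing \(\det B\), take a nonzero kernel vector of \(B \bmod p\), and lift it to a primitive integer vector \(v\) with \(Bv\in p\mathbb Z^n\). Your lift is simpler than the paper's: you scale one coordinate to \(1\) and lift it to exactly \(1\), whereas the paper completes the vector to a matrix in \(\operatorname{SL}_n(\mathbb F_p)\) and lifts its elementary factors. Because \(0\) is divisible by every prime, your lift also makes the paper's separate treatment of \(\det B=0\) and \(n=1\) unnecessary. In your Smith-normal-form variant, \(v=e_k\) alone already suffices.
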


\begin{proof}
For \(n=1\), the hypothesis gives \(B=\pm1\).  Let \(n\geq2\).  If
\(\det B=0\), a primitive
integer vector in the rational kernel of \(B\) is mapped to zero.  If a prime
\(p\) divides \(\det B\), choose
\(0\neq\overline v\in\ker(B\bmod p)\).  The vector \(\overline v\) has a
primitive integral lift \(v\): complete it to a matrix in
\(\operatorname{SL}_n(\mathbb F_p)\), factor this matrix into elementary
matrices, and lift the factors to \(\operatorname{SL}_n(\mathbb Z)\).  Every
entry of \(Bv\) is divisible by \(p\), so \(Bv\) is not primitive.  Both cases
contradict the hypothesis.  Hence \(\det B=\pm1\).
\end{proof}

We next isolate the specialization argument.  A primitive element is called
\emph{tame} if it is the image of a basis element under an automorphism of
\(M_n\) induced by an automorphism of \(F_n\).

Let \(\mathfrak m\) be a maximal ideal of \(R\), set
\(\Bbbk=R/\mathfrak m\), and write
\(\lambda_i=t_i+\mathfrak m\in \Bbbk^\times\).
The field \(\Bbbk\) is finite, since it is a field finitely generated as a ring.
Define

\[
b_\chi=(\lambda_1-1,\ldots,\lambda_n-1)^{\mathsf T},\qquad
H_\chi=\{q\in \Bbbk^{1\times n}:qb_\chi=0\},
\]
where the \emph{character}
\(\chi\colon M_n^{\mathrm{ab}}\to \Bbbk^\times\) is given by
\(t_i\mapsto\lambda_i\).  For \(u\in M_n\), write
\(\delta_x(u)=D(u)\bmod\mathfrak m\).

\begin{lemma}
\label{lem:finite-character}
Assume that \(n\geq2\) and that \(\chi\) is nontrivial.  If
\(0\neq K\leq H_\chi\) is a \(\Bbbk\)-subspace, then there exists a tame primitive
element \(u\in M_n\) such that

\[
0\neq\delta_x(u)\in K.
\]
\end{lemma}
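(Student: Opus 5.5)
The plan is to let a group of tame automorphisms act on the Fox rows of tame primitives through the specialized Jacobian, and to show this action is transitive enough to reach every nonzero subspace \(K\leq H_\chi\).

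\emph{Reduction via a good basis.} First I would normalize the basis so that \(\chi\) takes an especially simple form. The image of \(\chi\) is a finite cyclic subgroup of \(\Bbbk^\times\). By the Smith normal form, \(\mathbb Z^n=M_n^{\mathrm{ab}}\) has a basis \(e_1,\ldots,e_n\) with \(\chi(e_i)=1\) for \(i<n\) and \(\chi(e_n)=\mu\). Since \(\chi\) is nontrivial, \(\mu\neq1\). Moreover \(\Bbbk=\mathbb F_p[\mu]\), because every \(\lambda_i\) is a power of \(\mu\) and \(\Bbbk\) is generated by the \(\lambda_i\).

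Lift this change of basis to an automorphism of \(F_n\) and let \(y_1,\ldots,y_n\) be the resulting tame basis of \(M_n\). By the chain rule of Proposition~\ref{prop:fox-bachmuth}, the rows \(D_y(u)\) and \(D_x(u)\) differ by right multiplication by an invertible Jacobian. After reduction modulo \(\mathfrak m\), this is an invertible \(\Bbbk\)-linear map carrying \(H_\chi\) for the \(y\)-coordinates onto \(H_\chi\) for the \(x\)-coordinates, because \(q b_\chi=\chi(\bar u)-1\) in either coordinate system. It therefore suffices to prove the lemma in the \(y\)-coordinates. There \(b_\chi=(0,\ldots,0,\mu-1)^{\mathsf T}\) and \(H_\chi=\{q:q_n=0\}\cong\Bbbk^{n-1}\). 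The element \(u_0=y_1\) is tame primitive with \(\delta_y(u_0)=e_1\in H_\chi\), which gives a nonzero starting vector.

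\emph{Group action.} Next I would take a tame automorphism \(\alpha\) with \(\chi\circ\alpha^{\mathrm{ab}}=\chi\). For any \(u\), the chain rule reduced modulo \(\mathfrak m\) gives
\[
\delta(\alpha(u))=\delta(u)\,J_\chi(\alpha),
\]
where \(J_\chi(\alpha)=J(\alpha)\bmod\mathfrak m\). This holds because applying \(\alpha^{\mathrm{ab}}\) to the entries and then evaluating at \(\chi\) is the same as evaluating at \(\chi\). Also \(J_\chi(\alpha)b_\chi=b_\chi\), so \(H_\chi\) is preserved. Tame primitives are carried to tame primitives.

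I would use the automorphisms
\[
\alpha_{ij}^{(k)}\colon y_i\mapsto y_i\,y_n^k y_j y_n^{-k}\quad(i\neq j<n),
\]
fixing the other generators. They preserve \(\chi\). Because \(\chi(y_j)=1\), the term \(D(y_n^k)(1-\chi(y_j))\) vanishes after reduction. Hence \(\delta(y_n^ky_jy_n^{-k})=\mu^k e_j\), and the only changed row of \(J_\chi\) is row \(i\), which becomes \(e_i+\mu^k e_j\). Products of these maps give all transvections \(e_i+c\,e_j\) with \(c\in\mathbb F_p[\mu]=\Bbbk\), since sums of powers of \(\mu\) exhaust \(\Bbbk\). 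Therefore the group generated acts on \(H_\chi\cong\Bbbk^{n-1}\) through \(E_{n-1}(\Bbbk)=\operatorname{SL}_{n-1}(\Bbbk)\).

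\emph{Conclusion.} For \(n\geq3\), \(\operatorname{SL}_{n-1}(\Bbbk)\) is transitive on nonzero vectors of \(\Bbbk^{n-1}\). So for any nonzero \(q\in K\) there is \(\alpha\) with \(\delta(\alpha(y_1))=q\in K\), and \(\alpha(y_1)\) is tame primitive. For \(n=2\), the space \(H_\chi\) is one-dimensional, so \(K=H_\chi\), and \(u_0=y_1\) already works.

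The main obstacle is the bookkeeping in the basis change. One must check that \(H_\chi\), the set of tame primitives, and the nonvanishing of \(\delta\) are all compatible with passing from \(x\) to \(y\), and that the Fox derivative of the conjugate \(y_n^ky_jy_n^{-k}\) really reduces to \(\mu^k e_j\). Both come down to the chain rule and the identity \(D(u)b=\bar u-1\) from Proposition~\ref{prop:fox-bachmuth}.
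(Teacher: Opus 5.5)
Your proposal is correct and follows the paper's proof essentially step for step. It uses the same Smith/Nielsen normalization making \(\chi\) trivial on all but one basis element, and the same transport of \(K\) and \(H_\chi\) under a tame change of basis. That transport is cleanest to justify via \(b_{\chi,y}=Cb_\chi\), that is, \(D(y_i)b=\bar y_i-1\) applied row by row, rather than the identity for an arbitrary \(q\). It also uses the same generation of \(\operatorname{SL}_{n-1}(\Bbbk)\) on \(H_\chi\) by \(\chi\)-preserving Nielsen automorphisms, and treats the case \(n=2\) identically. Your single move \(y_i\mapsto y_iy_n^ky_jy_n^{-k}\) is just the paper's \(\nu_{ij}\) conjugated by \(\rho_{j,k}\), with the roles of \(y_1\) and \(y_n\) swapped, so it produces \(I+\mu^kE_{ij}\) directly instead of by conjugating \(I+E_{ij}\) with a diagonal matrix.
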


\begin{proof}
The assertion is invariant under a Nielsen change of basis.  Indeed, if
\(y_1,\ldots,y_n\) is obtained from the fixed basis by Nielsen
transformations and \(C\) is the matrix whose \(i\)-th row is
\(\delta_x(y_i)\), define \(\delta_y\), \(b_{\chi,y}\), and
\(H_{\chi,y}\) relative to this basis.  The specialized chain rule gives

\[
C\in\operatorname{GL}_n(\Bbbk),\qquad
\delta_x(u)=\delta_y(u)C,\qquad
b_{\chi,y}=Cb_\chi.
\]

Thus \(H_{\chi,y}C=H_\chi\), and \(K\) is replaced by \(KC^{-1}\).

Let \(C_0\leq \Bbbk^\times\) be generated by
\(\lambda_1,\ldots,\lambda_n\), and choose a generator \(g\) of \(C_0\).
Write \(\lambda_i=g^{a_i}\).  Smith reduction of the integer row
\((a_1,\ldots,a_n)\), realized by Nielsen transformations, gives a basis for
which

\[
\chi(y_1)=g,\qquad \chi(y_i)=1,\qquad 2\leq i\leq n,
\]

because \(\gcd(a_1,\ldots,a_n)\) is coprime to \(|C_0|\); replace \(g\) by
the resulting generator of \(C_0\).  If
\(p=\operatorname{char}\Bbbk\), then \(\Bbbk=\mathbb F_p(g)\), because the images of the
Laurent variables generate \(\Bbbk\) as a ring.  Let \(e_i\) denote the \(i\)-th
standard row vector.  In these coordinates,

\[
H_\chi=\{0\}\oplus \Bbbk^{n-1}.
\]

Assume first that \(n\geq3\).  For distinct \(i,j\in\{2,\ldots,n\}\) and
\(s\in\mathbb Z\), consider the Nielsen automorphisms

\[
\nu_{ij}\colon y_i\mapsto y_i y_j,
\qquad
\rho_{i,s}\colon y_i\mapsto y_1^s y_i y_1^{-s},
\]
fixing all generators not displayed.  They preserve \(\chi\).  Write
\(J_\chi(\eta)\) for the Jacobian of \(\eta\) in the \(y\)-basis, specialized
by \(\chi\).  On \(H_\chi\), the two Jacobians restrict to

\[
I+E_{ij}
\qquad\text{and}\qquad
\operatorname{diag}(1,\ldots,g^s,\ldots,1),
\]
respectively.  Conjugating the first matrix by the second gives
\(I+g^sE_{ij}\).  Since the powers of \(g\)
span \(\Bbbk=\mathbb F_p(g)\) over \(\mathbb F_p\), products of these elementary matrices
give \(I+aE_{ij}\) for every \(a\in \Bbbk\).  Hence the restricted Jacobians contain
\(\operatorname{SL}_{n-1}(\Bbbk)\), which is transitive on the nonzero row vectors
of \(\Bbbk^{n-1}\).

Choose \(0\neq q\in K\).  A product \(\sigma\) of the preceding Nielsen
automorphisms can be chosen so that
\(e_2J_\chi(\sigma)=q\).  Then \(u=\sigma(y_2)\) is tame primitive and
\(\delta_y(u)=q\).  Transforming back to the original basis proves the claim.

If \(n=2\), then \(H_\chi=\Bbbk e_2\), so \(K=H_\chi\), and \(u=y_2\) suffices.
\end{proof}

\begin{proof}[Proof of Answer~\ref{ans:problem-14-85}]
For \(n=1\), a primitive-preserving endomorphism sends a generator to a
generator and is therefore an automorphism.  Assume \(n\geq2\).  Every
primitive integer vector belongs to a basis of \(\mathbb Z^n\), and hence has
a tame primitive lift to
\(M_n\).  Conversely, the abelianization of a primitive element is primitive.
By Lemma~\ref{lem:integer-primitivity},
the map induced by \(\varphi\) on \(M_n^{\mathrm{ab}}\cong\mathbb Z^n\) lies
in \(\operatorname{GL}_n(\mathbb Z)\).

Choose an automorphism \(\tau\) of \(M_n\), induced by an automorphism of
\(F_n\), whose action on abelianization is inverse to that of \(\varphi\), and
set \(\psi=\tau\circ\varphi\).  Then \(\psi\) is an IA-endomorphism and
primitive-preserving.  Put
\(J=J(\psi)\).  By \eqref{eq:ia-fox}, \(Jb=b\).

Suppose that \(J\) is not invertible.  Then \(\det J\) belongs to a maximal
ideal \(\mathfrak m\) of \(R\).  Let \(J_{\mathfrak m}\) be the reduction of
\(J\) over \(\Bbbk=R/\mathfrak m\), and put

\[
0\neq K=\ker_{\mathrm{left}}J_{\mathfrak m}.
\]

The corresponding character
\(\chi\colon M_n^{\mathrm{ab}}\to \Bbbk^\times\) is nontrivial.
Indeed, if every \(\lambda_i\) were \(1\), specialization would be
augmentation, and the IA condition would give \(J_{\mathfrak m}=I_n\),
contrary to its singularity.  Moreover, every \(q\in K\) satisfies

\[
qb_\chi=qJ_{\mathfrak m}b_\chi=0.
\]

Thus \(0\neq K\leq H_\chi\).  By Lemma~\ref{lem:finite-character}, there is a tame
primitive \(u\in M_n\) such that
\(0\neq\delta_x(u)\in K\).  The IA chain rule gives

\[
D(\psi(u))\bmod\mathfrak m
=\delta_x(u)J_{\mathfrak m}=0.
\]

Hence all entries of \(D(\psi(u))\) lie in \(\mathfrak m\), so this row is not
unimodular.  By Proposition~\ref{prop:fox-bachmuth}(ii), \(\psi(u)\) is not primitive,
contrary to the hypothesis.  Therefore \(J\) is invertible.
Proposition~\ref{prop:fox-bachmuth}(iii) implies that \(\psi\) is an automorphism, and so
is \(\varphi=\tau^{-1}\circ\psi\).
\end{proof}

\begin{aipractices}

We asked the model (ChatGPT 5.6 Sol) for both a proof and a counterexample,
verified the exact status and hypotheses in the literature, and only then
requested a self-contained proof.  The known theorem for ordinary free
  groups supplied a useful structural analogue.  The main step was an
algebraic reformulation: primitivity became a condition on Fox derivatives,
while failure of invertibility became a finite-field obstruction.  More
generally, useful prompts should ask for a concrete obstruction, a reduction to
a simpler algebraic setting, and the smallest class of test objects needed for
a contradiction.
\end{aipractices}

\section{Equal distributions in free metabelian groups}
\label{sec:problem-19-94}

For \(w\in M_n\), a finite metabelian group \(G\), and \(g\in G\), set

\[
N_{w,G}(g)=\bigl|\{(a_1,\ldots,a_n)\in G^n:
                 w(a_1,\ldots,a_n)=g\}\bigr|.
\]

Elements \(u,v\in M_n\) have the same \emph{distribution} if
\(N_{u,G}=N_{v,G}\) for every finite metabelian group \(G\).  Timoshenko
proved that \(w\in M_n\) is primitive if and only if
\(N_{w,G}(g)=|G|^{n-1}\) for every finite metabelian group \(G\) and every
\(g\in G\) \cite{Timoshenko2013}.  Hence the next problem has an affirmative
answer when one of the two elements is primitive.

\begin{Question}[{\cite[Problem~19.94]{kourovka21}}]
Do two elements of \(M_n\) have the same distribution if and only if an
automorphism of \(M_n\) maps one to the other?
\end{Question}

\begin{Answer}
\label{ans:problem-19-94}
No.  Let \(M_2=\langle x,y\rangle\), put \(c=[x,y]\), and use the convention
\(a^z=z^{-1}az\).  The elements

\[
\begin{aligned}
g_1&=c^{x^2y^7}(c^x)^{-1}(c^{xy^7})^{-1}c,\\
g_2&=c^{x^3y^7}(c^x)^{-1}(c^{x^2y^7})^{-1}c
\end{aligned}
\]
have the same distribution on every finite metabelian group, but belong to
distinct \(\operatorname{Aut}(M_2)\)-orbits.
\end{Answer}

\subsection{Solution}

The \emph{profinite completion} \(\widehat M_n\) of \(M_n\) is the inverse
limit of its finite quotients.

\begin{proposition}
\label{prop:profinite-orbit}
Elements \(u,v\in M_n\) have the same distribution on all finite metabelian
groups if and only if some automorphism of \(\widehat M_n\) maps \(u\) to
\(v\).
\end{proposition}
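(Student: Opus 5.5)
The plan is to recast distributions as counts of continuous homomorphisms and then run the standard argument on the profinite word-map correspondence. Since \(M_n\) is free in the variety of metabelian groups, tuples \((a_1,\ldots,a_n)\in G^n\) in a finite metabelian group \(G\) are the same as homomorphisms \(M_n\to G\), and \(N_{u,G}(g)\) counts homomorphisms \(f\) with \(f(u)=g\). Every finite quotient of \(M_n\) is metabelian, so these are precisely the continuous homomorphisms \(\widehat M_n\to G\).

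For the easy direction, suppose \(\alpha\in\operatorname{Aut}(\widehat M_n)\) (continuous; by Nikolov--Segal every automorphism of a topologically finitely generated profinite group is continuous, but we only need continuous ones) satisfies \(\alpha(u)=v\). Then \(f\mapsto f\circ\alpha\) is a bijection on \(\operatorname{Hom}(\widehat M_n,G)\). It carries \(\{f: f(v)=g\}\) onto \(\{f: f(u)=g\}\), so \(N_{u,G}=N_{v,G}\).

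For the converse, assume equal distributions and fix a finite metabelian \(G\). I would first upgrade equal counts to the following statement: for every finite metabelian \(G\) and \(g\in G\), the number of \emph{surjective} homomorphisms \(f\colon M_n\to G\) with \(f(u)=g\) equals the number with \(f(v)=g\).

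1. Write \(\operatorname{Hom}\) counts as sums over subgroups \(H\leq G\) of surjective counts onto \(H\). Subgroups of metabelian groups are metabelian, and \(g\in H\) is forced.
2. Apply Möbius inversion on the subgroup lattice, inducting on \(|G|\). This yields \(S_{u,G}(g)=S_{v,G}(g)\) for the surjective counts.

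Next, let \(\mathcal N\) be the set of open normal subgroups of \(\widehat M_n\). For each \(N\in\mathcal N\) put \(G=\widehat M_n/N\) and \(g=uN\). The quotient map is surjective and sends \(u\) to \(g\), so \(S_{v,G}(g)\ge1\). Hence some epimorphism \(\widehat M_n\to G\) sends \(v\) to \(uN\); composing with an automorphism of \(G\) is not needed. Its kernel \(N'\) has index \([\widehat M_n:N]\). Define \(X_N\) as the set of continuous epimorphisms \(\beta\colon\widehat M_n\to\widehat M_n/N\) with \(\beta(v)=uN\). This set is finite and nonempty, since \(\widehat M_n\) is topologically finitely generated.

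For \(N\subseteq N_1\), composing with the projection gives maps \(X_N\to X_{N_1}\). By compactness (König), \(\varprojlim X_N\neq\emptyset\). A compatible family defines a continuous surjective endomorphism \(\beta\colon\widehat M_n\to\widehat M_n\) with \(\beta(v)=u\). A surjective continuous endomorphism of a topologically finitely generated profinite group is an automorphism, because such a group is Hopfian. This holds since the finitely many open subgroups of each index are permuted by preimage under \(\beta\). Then \(\beta^{-1}\) maps \(u\) to \(v\).

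The main obstacle is the compatibility in the inverse limit. Equality of counts directly gives epimorphisms onto each individual quotient, but we need them coherent. The cleanest fix is to index the system by \(\mathcal N\) itself, using \(X_N\) as above, so that restriction to coarser quotients is automatic. The only nontrivial input is nonemptiness of each \(X_N\), which is exactly what step 2 supplies. I would double-check the Möbius inversion step, specifically that all subgroups involved are metabelian so the hypothesis applies to them. This holds because the variety of metabelian groups is subgroup-closed.
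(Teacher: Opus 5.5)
Your proposal is correct and follows essentially the same route as the paper. You use induction over the subgroup lattice to pass from equal distributions to equal epimorphism counts, a compactness argument producing a surjective continuous endomorphism \(\beta\) with \(\beta(v)=u\), and the Hopfian property of topologically finitely generated profinite groups. The difference is only in packaging: you obtain \(\beta\) as a point of \(\varprojlim X_N\), an inverse limit of finite nonempty sets of epimorphisms, whereas the paper lifts each epimorphism to an endomorphism and uses the finite intersection property of closed conditions in the compact space of continuous endomorphisms. You also make explicit the Hopfian argument and the continuity of automorphisms, both of which the paper leaves implicit.
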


\begin{proof}
An automorphism of \(\widehat M_n\) acts by precomposition on continuous
homomorphisms to each finite group, proving one implication.  Conversely, for
a finite metabelian group \(G\) and \(z\in G\),

\[
N_{u,G}(z)=
\sum_{\substack{H\leq G\\ z\in H}}
\bigl|\{\theta\colon M_n\twoheadrightarrow H:\theta(u)=z\}\bigr|.
\]

The term with \(H=G\) is the number of epimorphisms onto \(G\), while all
other terms involve proper subgroups.  Induction on \(|G|\) therefore shows
that equality of distributions implies equality of the corresponding numbers
of epimorphisms.  Let
\(q\colon\widehat M_n\twoheadrightarrow Q\) be a finite quotient.  Applying
this observation to \(q|_{M_n}\) gives an epimorphism
\(\theta\colon M_n\twoheadrightarrow Q\) with \(\theta(v)=q(u)\).  Lift each
\(\theta(x_i)\) to \(b_i\in M_n\).  By freeness, \(x_i\mapsto b_i\) extends to
an endomorphism of \(M_n\), and hence to an endomorphism \(\alpha_q\) of
\(\widehat M_n\), with

\[
q\alpha_q(v)=q(u),\qquad q\alpha_q(\widehat M_n)=Q.
\]

Passing to a common refinement shows that the conditions obtained from any
finite collection of quotients have a common solution.  They are closed
conditions on the compact space of continuous endomorphisms.  Hence some
endomorphism \(\alpha\) satisfies them for every finite quotient.  It follows
that \(\alpha(v)=u\) and that \(\alpha(\widehat M_n)\) is dense.
It is also compact, hence closed, so \(\alpha\) is surjective.  A surjective
endomorphism of a finitely generated profinite group is injective; therefore
\(\alpha\) is an automorphism, and \(\alpha^{-1}(u)=v\).
\end{proof}

We use the following rank-two profinite Magnus description.  Let \(X,Y\) be
the images of
\(x,y\) in \(M_2/M_2'\), and put

\[
R=\mathbb Z[X^{\pm1},Y^{\pm1}],\qquad
\Lambda=\varprojlim_{m,N}
(\mathbb Z/m\mathbb Z)[(\mathbb Z/N\mathbb Z)^2],
\]
Thus \(\Lambda\) is the completed group ring of
\(\widehat{\mathbb Z}^{\,2}\), where \(\widehat{\mathbb Z}\) is the
profinite completion of \(\mathbb Z\), and
\(\varepsilon\colon\Lambda\to\widehat{\mathbb Z}\) is the
\emph{augmentation map}.  Its kernel is the \emph{augmentation ideal}.
The group \(M_2'\) is the free cyclic \(R\)-module generated by \(c\); its
closure in \(\widehat M_2\) is the free rank-one topological
\(\Lambda\)-module generated by \(c\)
\cite[Sections~1--2]{BenEzra2016}.  We write its elements as \(c^r\),
\(r\in\Lambda\).  The natural map
\(\operatorname{Aut}(\widehat M_2)\to
\operatorname{GL}_2(\widehat{\mathbb Z})\) is surjective
\cite[Sections~1--2]{BenEzra2016}.

\begin{lemma}
\label{lem:rank-two-magnus}
Let \(\rho\in\operatorname{Aut}(\widehat M_2)\) induce
\(A\in\operatorname{GL}_2(\widehat{\mathbb Z})\) on abelianization, and let
\(\sigma_A\) be the induced automorphism of \(\Lambda\).  Then

\[
\rho(c^r)=c^{d\sigma_A(r)},\qquad r\in\Lambda
\]
for some \(d\in\Lambda^\times\) satisfying
\(\varepsilon(d)=\det A\).  Conversely, if
\(s\in\Lambda^\times\) and \(\varepsilon(s)=1\), there is an automorphism
\(\tau_s\) of \(\widehat M_2\), inducing the identity on abelianization, such that
\(\tau_s(c^r)=c^{sr}\) for every \(r\in\Lambda\).
\end{lemma}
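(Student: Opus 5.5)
The plan has two parts: a rigidity statement for automorphisms restricted to the closed derived subgroup, and an explicit construction of the IA-automorphisms \(\tau_s\).

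\textbf{First assertion.} Write \(\overline{M}\) for the closure of \(M_2'\) in \(\widehat M_2\). This closure is the closed derived subgroup, so it is characteristic and \(\rho(\overline M)=\overline M\).

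\begin{enumerate}
  \item \emph{Semilinearity.} The conjugation action of \(\widehat M_2\) on \(\overline M\) factors through the abelianization \(\widehat{\mathbb Z}^2\) and extends by continuity to the \(\Lambda\)-module structure. For \(g\in\widehat M_2\) we have \(\rho(m^g)=\rho(m)^{\rho(g)}\), and \(\rho(g)\) has abelianization \(A\bar g\). Hence \(\rho|_{\overline M}\) is \(\sigma_A\)-semilinear: \(\rho(c^{r})=\rho(c)^{\sigma_A(r)}\), first for group elements \(r\), then for all \(r\in\Lambda\) by linearity and continuity.
  \item \emph{The coefficient \(d\).} Put \(\rho(c)=c^d\) with \(d\in\Lambda\). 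Then \(\rho(c^r)=c^{d\sigma_A(r)}\), since \(\Lambda\) is commutative.
  \item \emph{\(d\) is a unit.} Applying the same reasoning to \(\rho^{-1}\) gives \(d'\) with \(c=\rho\rho^{-1}(c)=c^{d\,\sigma_A(d')}\). Freeness of \(\overline M\) as a rank-one module then gives \(d\,\sigma_A(d')=1\).
  \item \emph{Augmentation.} To get \(\varepsilon(d)\), pass to the quotient by \(\overline{[\widehat M_2',\widehat M_2]}\), which is the free profinite class-2 nilpotent group. Its centre term is \(\overline M/I\overline M\cong\widehat{\mathbb Z}\), generated by \([x,y]\), where \(I\) is the augmentation ideal. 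The commutator map \(\bigwedge^2\widehat{\mathbb Z}^2\to\widehat{\mathbb Z}\) is equivariant, and \(A\) acts on \(\bigwedge^2\) by \(\det A\). The image of \(c\) is the commutator of the images of \(x\) and \(y\), whose abelianizations are \(A e_1\) and \(A e_2\), so \(c\mapsto c^{\det A}\) modulo \(I\overline M\). Therefore \(\varepsilon(d)=\det A\).
\end{enumerate}

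\textbf{Converse.} Given \(s\in\Lambda^\times\) with \(\varepsilon(s)=1\), I would construct \(\tau_s\) via Fox calculus in the profinite Magnus embedding of Ben-Ezra. The approach is to look for \(\tau_s\) of the form \(x\mapsto x\,c^{\alpha}\), \(y\mapsto y\,c^{\beta}\) with \(\alpha,\beta\in\Lambda\). Any such assignment extends to a continuous endomorphism by the universal property of \(\widehat M_2\) as free pro-metabelian. It induces the identity on abelianization, so it is \(\Lambda\)-linear on \(\overline M\).

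A direct commutator computation in a metabelian group gives
\[
[x c^{\alpha},\,y c^{\beta}]=c^{\,1+\alpha(1-Y)+\beta(X-1)}
\]
up to the sign and exponent conventions for \(a^z=z^{-1}az\). So the image of \(c\) is \(c^{1+\alpha(1-Y)+\beta(X-1)}\).

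We need \(1+\alpha(1-Y)+\beta(X-1)=s\). Since \(s-1\in I\), and \(I\) is generated as a closed ideal by \(X-1\) and \(Y-1\), such \(\alpha,\beta\) exist. Closed generation follows by completing the finite-level identity, using compactness to pass to the limit. The resulting endomorphism acts on \(\overline M\) by multiplication by the unit \(s\), so it is bijective there. It is also the identity on the abelianization, so it is a bijective endomorphism of the extension, hence an automorphism.

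\textbf{Main obstacle.} The delicate point is the exact commutator identity, with the paper's conventions, together with the existence of \(\alpha,\beta\) in the completed ring. I would verify the identity first in \(M_2\) over \(R\), then transfer it by density and continuity.
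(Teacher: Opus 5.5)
Your argument is correct, and for the first assertion it takes a genuinely different route from the paper.

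The paper obtains \(\rho(c^r)=c^{d\sigma_A(r)}\) from a profinite Magnus embedding and the Fox chain rule. There \(d\) is the determinant of the Fox Jacobian of \(\rho\), and \(\varepsilon(d)=\det A\) is read off from that Jacobian on abelianization. You use only the input the paper cites from Ben-Ezra, namely that the closed derived subgroup \(\overline M\) is a free rank-one \(\Lambda\)-module. Equivariance of conjugation makes \(\rho|_{\overline M}\) \(\sigma_A\)-semilinear. Applying the same reasoning to \(\rho^{-1}\) shows that \(d\) is a unit. The alternating commutator pairing into \(\overline M/I\overline M\cong\widehat{\mathbb Z}\) then gives \(\varepsilon(d)=\det A\). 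This avoids setting up Fox calculus in the completed setting, which the paper merely asserts, and it makes the invertibility of \(d\) explicit. What you lose is an explicit formula for \(d\), which the lemma never uses.

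For the converse you use the same endomorphism \(x\mapsto xc^{\alpha}\), \(y\mapsto yc^{\beta}\) as the paper. Your compactness argument correctly shows that \(\Lambda(X-1)+\Lambda(Y-1)\) is all of \(I\), not merely dense in it. Your bijectivity argument (bijective on \(\overline M\) and on the abelianization) replaces the paper's appeal to the Hopfian property of finitely generated profinite groups.

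One cosmetic point: with \([g,h]=g^{-1}h^{-1}gh\) and \(a^z=z^{-1}az\), one gets \([xc^{\alpha},yc^{\beta}]=c^{1+\alpha(Y-1)+\beta(1-X)}\). This matches the paper's \(s=1+(Y-1)P+(1-X)Q\). Your correction terms have the opposite sign, but since \(\alpha,\beta\) range over all of \(\Lambda\), this changes nothing.
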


\begin{proof}
The inverse-limit Magnus embedding and the Fox chain rule give the first
statement: in rank two, the determinant of the Jacobian is the multiplier of
the basic commutator, and its augmentation is the determinant on
abelianization.  For the converse, the augmentation ideal of \(\Lambda\) is
the closed ideal generated by \(X-1\) and \(Y-1\).  Thus

\[
s=1+(Y-1)P+(1-X)Q
\]
for some \(P,Q\in\Lambda\).  The continuous endomorphism

\[
x\longmapsto xc^P,\qquad y\longmapsto yc^Q
\]
induces the identity on abelianization and sends \(c\) to \(c^s\).  Since
\(s\) is a unit, it is surjective on the closed derived group and hence on
\(\widehat M_2\).  It is an automorphism because every surjective
endomorphism of a finitely generated profinite group is injective.
\end{proof}

An element \(X^aY^b\in\Lambda^\times\), with
\(a,b\in\widehat{\mathbb Z}\), is called \emph{group-like}.

\begin{lemma}
\label{lem:profinite-power}
If \(Z\in\Lambda^\times\) is group-like and
\(a\in\widehat{\mathbb Z}^{\times}\), there is a
unit \(D_a(Z)\in\Lambda^\times\) such that

\[
Z^a-1=(Z-1)D_a(Z),\qquad \varepsilon(D_a(Z))=a.
\]
\end{lemma}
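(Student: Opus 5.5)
The plan is to build \(D_a(Z)\) as a limit of the finite-level geometric sums \(1+Z+\cdots+Z^{m-1}\), using that \(\Lambda=\varprojlim (\mathbb Z/m\mathbb Z)[(\mathbb Z/N\mathbb Z)^2]\) is an inverse limit of finite rings.

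Fix a finite level \(\Lambda_{m,N}=(\mathbb Z/m\mathbb Z)[(\mathbb Z/N\mathbb Z)^2]\). The image \(\bar Z\) of \(Z\) there is a group element of order dividing \(N\), so \(\bar Z^a\) depends only on \(a\bmod N\). Choose a positive integer \(k\equiv a \pmod{\operatorname{lcm}(m,N)}\) and put
\[
D_{m,N}=1+\bar Z+\cdots+\bar Z^{k-1}\in\Lambda_{m,N}.
\]
Then \(\bar Z^{k}-1=(\bar Z-1)D_{m,N}\) and \(\varepsilon(D_{m,N})=k\equiv a \pmod m\).

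\textbf{Well-definedness.} We need \(D_{m,N}\) to be independent of the choice of \(k\). If \(k'=k+\ell\,\operatorname{lcm}(m,N)\), the difference is
\[
\bar Z^k\bigl(1+\bar Z+\cdots+\bar Z^{\ell\operatorname{lcm}(m,N)-1}\bigr).
\]
This is \(\ell\operatorname{lcm}(m,N)/\operatorname{ord}(\bar Z)\) copies of the full orbit sum of \(\bar Z\). The multiplicity is a multiple of \(m\), because \(\operatorname{ord}(\bar Z)\mid N\). So the difference vanishes mod \(m\). The same computation shows the \(D_{m,N}\) are compatible under the transition maps, since a \(k\) valid at a finer level is also valid at a coarser one. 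Hence they define \(D_a(Z)\in\Lambda\) with \(Z^a-1=(Z-1)D_a(Z)\) and \(\varepsilon(D_a(Z))=a\).

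\textbf{Invertibility.} This is the main step. Let \(b=a^{-1}\in\widehat{\mathbb Z}^\times\). Applying the construction to the group-like element \(W=Z^a\) and exponent \(b\) gives \(D_b(Z^a)\). Then
\[
Z-1=W^b-1=(W-1)D_b(W)=(Z-1)D_a(Z)D_b(Z^a).
\]
We cannot cancel \(Z-1\) directly, since it may be a zero divisor. Instead we verify \(D_a(Z)D_b(Z^a)=1\) at each finite level. There, with \(k\equiv a\) and \(k'\equiv b\) chosen as above, \(kk'\equiv 1\pmod{\operatorname{lcm}(m,N)}\). Expanding the product of the two geometric sums gives the geometric sum of length \(kk'\) in \(\bar Z\). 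By the well-definedness argument, that sum equals the sum of length \(1\), namely \(1\). Hence \(D_a(Z)\) is a unit with inverse \(D_b(Z^a)\).
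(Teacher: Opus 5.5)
\textbf{Gap in the well-definedness step.} Your overall strategy is the paper's: build \(D_a(Z)\) levelwise from geometric sums, then get invertibility from the cocycle identity \(D_{ab}(Z)=D_a(Z)D_b(Z^a)\) with \(b=a^{-1}\). However, the well-definedness step is false as stated, and both compatibility and invertibility depend on it. The modulus \(\operatorname{lcm}(m,N)\) is too coarse. The multiplicity \(\ell\operatorname{lcm}(m,N)/\operatorname{ord}(\bar Z)\) need not be divisible by \(m\); the fact \(\operatorname{ord}(\bar Z)\mid N\) does not give this when \(m\) and \(N\) share factors.

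Concretely, take the level \(m=N=2\), \(Z=X\) and \(a=1\). Both \(k=1\) and \(k=3\) are admissible, but they give \(D=1\) and \(D=1+\bar X+\bar X^2=\bar X\) in \((\mathbb Z/2\mathbb Z)[(\mathbb Z/2\mathbb Z)^2]\). The same example breaks your invertibility step. Here \(k=1\) and \(k'=3\) satisfy \(kk'\equiv1\pmod{\operatorname{lcm}(2,2)}\), yet the geometric sum of length \(3\) is \(\bar X\), not \(1\). In general, shifting \(k\) by \(\operatorname{ord}(\bar Z)\) adds one copy of the orbit sum. So the levelwise sum is only determined by \(k\) modulo \(m\operatorname{ord}(\bar Z)\), not modulo \(\operatorname{lcm}(m,N)\).

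\textbf{Repair.} Choose \(k\equiv a\pmod{mN}\). Then for \(k'=k+\ell mN\) the multiplicity is \(\ell m\,(N/\operatorname{ord}(\bar Z))\), which is a multiple of \(m\). Hence the levelwise sum is well defined, and it still satisfies \(\bar Z^k=\bar Z^a\) and \(\varepsilon\equiv a\pmod m\). A \(k\) valid at a finer level \((m',N')\) is valid at \((m,N)\), so the sums are compatible. Finally, choosing \(k\equiv a\) and \(k'\equiv a^{-1}\pmod{mN}\) gives \(kk'\equiv1\pmod{mN}\), and the image of \(Z^a\) at that level is \(\bar Z^k\). Your product computation then correctly yields \(D_a(Z)D_{a^{-1}}(Z^a)=1\).

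With this change your argument coincides with the paper's. It also makes explicit the continuity of \(k\mapsto D_k(Z)\) in the profinite topology, which the paper's phrase ``extend continuously'' leaves implicit.
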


\begin{proof}
For an integer \(m\), define

\[
D_m(Z)=
\begin{cases}
\displaystyle\sum_{j=0}^{m-1}Z^j,&m>0,\\
0,&m=0,\\
-\displaystyle\sum_{j=m}^{-1}Z^j,&m<0.
\end{cases}
\]

These elements extend continuously, through the finite group rings defining
\(\Lambda\), to \(D_a(Z)\) for \(a\in\widehat{\mathbb Z}\).  The integral
identities extend by continuity and give

\[
Z^a-1=(Z-1)D_a(Z),\qquad \varepsilon(D_a(Z))=a,
\]

and

\[
D_{ab}(Z)=D_a(Z)D_b(Z^a).
\]

Taking \(b=a^{-1}\) shows that
\(D_a(Z)^{-1}=D_{a^{-1}}(Z^a)\).
\end{proof}

Set

\[
f=(X-1)(XY^7-1),\qquad h=(X-1)(X^2Y^7-1).
\]

Then the elements in the answer are \(g_1=c^f\) and \(g_2=c^h\).

\begin{lemma}
\label{lem:same-distribution}
The elements \(g_1\) and \(g_2\) lie in the same
\(\operatorname{Aut}(\widehat M_2)\)-orbit.
\end{lemma}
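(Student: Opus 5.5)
The idea is to produce the automorphism in two stages. First, an automorphism \(\rho\) of \(\widehat M_2\) whose induced map \(A\) on abelianization carries the two linear factors of \(f\) to unit multiples of the two factors of \(h\). Second, a correction \(\tau_s\) from Lemma~\ref{lem:rank-two-magnus} that removes the leftover unit multiplier. Since \(g_1=c^f\) and \(g_2=c^h\), Lemma~\ref{lem:rank-two-magnus} gives \(\rho(g_1)=c^{d\,\sigma_A(f)}\), so it suffices to arrange \(\sigma_A(f)=h\cdot w\) for a unit \(w\in\Lambda^\times\) with \(\varepsilon(d w)=1\).

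\textbf{Choice of \(A\).} Put \(Z=X^2Y^7\), and look for \(A\) of the form
\[
X\mapsto X^v,\qquad Y\mapsto X^\alpha Y^\beta,\qquad v,\beta\in\widehat{\mathbb Z}^\times,\ \alpha\in\widehat{\mathbb Z}.
\]
We want \(XY^7\mapsto Z^u\) for some \(u\in\widehat{\mathbb Z}^\times\). This requires \(\beta=u\) and \(v+7\alpha=2u\).

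Lemma~\ref{lem:profinite-power} then gives
\[
\sigma_A(f)=(X^v-1)(Z^u-1)=h\,D_v(X)\,D_u(Z),
\]
with multiplier \(w=D_v(X)D_u(Z)\in\Lambda^\times\). Its augmentation is \(\varepsilon(w)=vu\).

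The determinant of \(A\) is \(v\beta=vu\). So the total augmentation is
\[
\varepsilon(dw)=\det A\cdot vu=(vu)^2 .
\]
Taking \(v=u^{-1}\) makes this equal to \(1\). The remaining condition becomes \(7\alpha=2u-u^{-1}\). Since \(7\) is invertible in \(\mathbb Z_p\) for \(p\neq7\), this only needs \(2u^2\equiv1\pmod 7\), i.e. \(u\equiv\pm2\pmod 7\) in \(\mathbb Z_7\).

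So we pick \(u\in\widehat{\mathbb Z}^\times\) with \(7\)-adic component \(2\) (and, say, component \(1\) elsewhere), and solve for \(\alpha\in\widehat{\mathbb Z}\). Then \(A\in\operatorname{GL}_2(\widehat{\mathbb Z})\). By the cited surjectivity \(\operatorname{Aut}(\widehat M_2)\to\operatorname{GL}_2(\widehat{\mathbb Z})\), there is a \(\rho\) inducing \(A\), and Lemma~\ref{lem:rank-two-magnus} gives \(\rho(g_1)=c^{dwh}\) with \(\varepsilon(dw)=1\).

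\textbf{Correction.} Set \(s=(dw)^{-1}\in\Lambda^\times\), so \(\varepsilon(s)=1\). The automorphism \(\tau_s\) of Lemma~\ref{lem:rank-two-magnus} sends \(c^{dwh}\) to \(c^{h}=g_2\). Hence \(\tau_s\circ\rho\) maps \(g_1\) to \(g_2\).

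\textbf{Main obstacle.} The only delicate point is the bookkeeping of augmentations: the multiplier \(d\) contributes \(\det A\), and each \(D\)-factor contributes its exponent. One must check that the constraint \(2u^2\equiv1\pmod7\) is solvable, which it is, and that \(\alpha\) exists in \(\widehat{\mathbb Z}\) (i.e. \(2u-u^{-1}\in7\widehat{\mathbb Z}\)). It is also worth confirming that \(\sigma_A\) acts on group-like elements as expected, so that \(\sigma_A(XY^7)=X^{v+7\alpha}Y^{7\beta}\).
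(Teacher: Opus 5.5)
Your proposal is correct and is essentially the paper's proof. Your matrix, with upper row \((u^{-1},(2u-u^{-1})/7)\) and lower row \((0,u)\), is the paper's \(A\) with \(u\) replaced by \(u^{-1}\); the paper's choice of \(7\)-adic component \(3\) for its \(u\) corresponds to your \(u\equiv-2\pmod 7\). The argument then finishes with the same correction \(\tau_s\) from Lemma~\ref{lem:rank-two-magnus}, and the only cosmetic difference is that you derive \(\det A=1\) from the augmentation count \(\varepsilon(dw)=(vu)^2\), whereas the paper takes \(A\in\operatorname{SL}_2(\widehat{\mathbb Z})\) from the outset.
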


\begin{proof}
Choose \(u\in\widehat{\mathbb Z}^{\times}\) whose \(7\)-adic component is
\(3\) and whose other \(p\)-adic components are \(1\).  Since
\(u^{-1}\equiv5\pmod 7\),

\[
\frac{2u^{-1}-u}{7}\in\widehat{\mathbb Z}.
\]

The matrix

\[
A=\begin{pmatrix}
u&(2u^{-1}-u)/7\\
0&u^{-1}
\end{pmatrix}
\in\operatorname{SL}_2(\widehat{\mathbb Z})
\]
is induced by an automorphism \(\rho\) of \(\widehat M_2\), by the surjectivity
recorded above.  For the induced automorphism \(\sigma_A\) of \(\Lambda\),

\[
\sigma_A(X)=X^u,\qquad
\sigma_A(XY^7)=(X^2Y^7)^{u^{-1}}.
\]

Consequently, Lemma~\ref{lem:profinite-power} gives

\[
\sigma_A(f)=hD_u(X)D_{u^{-1}}(X^2Y^7).
\]

By Lemma~\ref{lem:rank-two-magnus}, \(\rho(c)=c^d\) for a unit \(d\) of
augmentation \(1\).  Hence

\[
\rho(g_1)=c^{hs},\qquad
s=dD_u(X)D_{u^{-1}}(X^2Y^7),\qquad \varepsilon(s)=1.
\]

Apply the second part of Lemma~\ref{lem:rank-two-magnus} to \(s^{-1}\).  The
resulting automorphism \(\tau\) satisfies
\((\tau\rho)(g_1)=g_2\).
\end{proof}

\begin{lemma}
\label{lem:distinct-discrete-orbits}
No automorphism of \(M_2\) maps \(g_1\) to \(g_2\).
\end{lemma}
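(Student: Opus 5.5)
Suppose some \(\rho\in\operatorname{Aut}(M_2)\) satisfies \(\rho(g_1)=g_2\); we aim for a contradiction. The plan is to repeat the argument of Lemma~\ref{lem:same-distribution} with \(\mathbb Z\) in place of \(\widehat{\mathbb Z}\) and \(R\) in place of \(\Lambda\). Let \(A\in\operatorname{GL}_2(\mathbb Z)\) be the matrix induced on abelianization and \(\sigma_A\) the induced automorphism of \(R\). The discrete analogue of Lemma~\ref{lem:rank-two-magnus} (the same Fox chain rule, now over \(R\)) gives \(\rho(c^r)=c^{d\sigma_A(r)}\) with \(d\in R^\times\). Since \(M_2'\) is a free cyclic \(R\)-module on \(c\), the equation \(\rho(g_1)=g_2\) becomes
\[
d\,\sigma_A(f)=h \quad\text{in } R .
\]
The units of \(R=\mathbb Z[X^{\pm1},Y^{\pm1}]\) are \(\pm X^aY^b\). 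Hence \(\sigma_A(f)\) and \(h\) must be associates in \(R\).

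Next, compare irreducible factorizations in the UFD \(R\). For primitive \((a,b)\in\mathbb Z^2\), the binomial \(X^aY^b-1\) is irreducible up to units. So \(f\) factors as \((X-1)(XY^7-1)\) and \(h\) as \((X-1)(X^2Y^7-1)\), each a product of two such irreducibles. The automorphism \(\sigma_A\) sends \(X^vY^w-1\) to \(Z^{A^{\mathsf T}(v,w)}-1\) (up to convention) and preserves the set of these binomials. Note that \(Z^{\pm v}-1\) are associates.

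Comparing factors, \(A\) must map the set of exponent vectors \(\{\pm(1,0),\pm(1,7)\}\) onto \(\{\pm(1,0),\pm(2,7)\}\). This gives two cases:
\begin{enumerate}
  \item \((1,0)\mapsto\pm(1,0)\) and \((1,7)\mapsto\pm(2,7)\);
  \item \((1,0)\mapsto\pm(2,7)\) and \((1,7)\mapsto\pm(1,0)\).
\end{enumerate}
In either case \(A\) is an integer matrix taking one basis of a sublattice to another. The determinant of the pair \(\{(1,0),(1,7)\}\) is \(7\), and that of \(\{(1,0),(2,7)\}\) is also \(7\), so determinants alone do not decide the question.

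The obstruction should come from a finer invariant. In case 1, \(A(1,0)=\pm(1,0)\) forces \(A(0,7)=\pm(2,7)\mp(1,0)\) up to the chosen signs. This means \(A(0,1)\) has first coordinate \((\pm2\mp1)/7\), which is not an integer for any sign choice (it lies in \(\{\pm1/7,\pm3/7\}\)). Case 2 is handled the same way: solve for \(A(0,1)=(\pm(1,0)\mp(2,7))/7\), whose first coordinate again lies in \(\{\pm1/7,\pm3/7\}\). This is exactly where integrality fails, whereas over \(\widehat{\mathbb Z}\) the \(7\)-adic unit \(u=3\) absorbed it, as in Lemma~\ref{lem:same-distribution}.

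I expect the main obstacle to be justifying the setup rather than the arithmetic. First, the multiplier \(d\) must be a genuine unit of \(R\). Second, the action on \(c^r\) must be exactly \(d\sigma_A(r)\) in the discrete setting, which needs the discrete Magnus/Fox description and the fact that inner automorphisms act by multiplication by group-like units. Third, each \(X^vY^w-1\) with primitive exponent vector is irreducible, which follows by a change of variables to \(X-1\). Once these are in place, the case analysis above is routine, and any \(A\) is excluded by integrality.
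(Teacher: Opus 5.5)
Your proof is correct and shares the paper's skeleton. The rank-two Magnus formula reduces \(\rho(g_1)=g_2\) to \(d\,\sigma_A(f)=h\) with \(d\in R^\times\). One then deduces that the induced integral matrix carries \(\{\pm(1,0),\pm(1,7)\}\) onto \(\{\pm(1,0),\pm(2,7)\}\), possibly with the members interchanged. Finally, the entries \((2\eta-\delta)/7\) or \((\eta-2\delta)/7\) are not integers.

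The difference is the middle step. The paper reads the direction vectors off Newton polygons: multiplication by a unit \(\pm X^iY^j\) only translates the polygon, \(\sigma_A\) transforms it linearly, and the polygons of \(f\) and \(h\) are parallelograms with edge directions \((1,0),(1,7)\) and \((1,0),(2,7)\). You read them off the factorization in the UFD \(R\). This needs two things:
\begin{itemize}
\item irreducibility of \(Z^v-1\) for primitive \(v\): after a change of basis this is \(X-1\), and \(R/(X-1)\cong\mathbb Z[Y^{\pm1}]\) is a domain;
\item the converse you leave implicit, that \(Z^v-1\) and \(Z^w-1\) are associates only if \(v=\pm w\): compare monomials in \(Z^v-1=\pm Z^u(Z^w-1)\).
\end{itemize}
The Newton-polygon route needs no irreducibility and works for arbitrary Laurent polynomials. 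Yours is tailored to products of primitive binomials but is equally short here. The two are linked by Ostrowski's theorem: the Newton polygon of \((Z^v-1)(Z^w-1)\) is the Minkowski sum of the segments from \(0\) to \(v\) and from \(0\) to \(w\).

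The setup you flag as the main obstacle is lighter than you expect. Since \(\rho(c)\in M_2'=c^R\), you get \(\rho(c)=c^d\) directly. Semilinearity gives \(\rho(c^r)=c^{d\sigma_A(r)}\). Then \(\rho(M_2')=M_2'\), together with freeness of \(M_2'\), forces \(dR=R\). No discussion of inner automorphisms is needed.
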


\begin{proof}
Suppose that \(\alpha\in\operatorname{Aut}(M_2)\) induces
\(B\in\operatorname{GL}_2(\mathbb Z)\) on abelianization.  The rank-two
Magnus formula over \(R\) gives

\[
\alpha(c^r)=c^{q\sigma_B(r)}
\]
for some \(q\in R^\times\).  For a nonzero Laurent polynomial, its
\emph{Newton polygon} is the convex hull of the exponent vectors of its
nonzero monomials.  Since
\(R^\times=\{\pm X^iY^j:i,j\in\mathbb Z\}\), multiplication by \(q\) only
translates this polygon.  Up to sign, the two edge-direction vectors of the Newton
polygons of \(f\) and \(h\) are respectively

\[
\{(1,0),(1,7)\},\qquad \{(1,0),(2,7)\}.
\]

Thus \(\alpha(g_1)=g_2\) would force \(B\), up to independent signs, to map
the first pair to the second, possibly after interchanging its members.  With
no interchange, the first column of \(B\) is \((\delta,0)^{\mathsf T}\), and
the first entry of its second column is

\[
\frac{2\eta-\delta}{7},\qquad \delta,\eta\in\{\pm1\},
\]
which is not an integer.  After interchange, the corresponding entry is
\((\eta-2\delta)/7\), again not an integer.  This contradicts
\(B\in\operatorname{GL}_2(\mathbb Z)\).
\end{proof}

\begin{proof}[Proof of Answer~\ref{ans:problem-19-94}]
Lemma~\ref{lem:same-distribution} and
Proposition~\ref{prop:profinite-orbit} show that \(g_1\) and \(g_2\) have the
same distribution.  Lemma~\ref{lem:distinct-discrete-orbits} shows that their
\(\operatorname{Aut}(M_2)\)-orbits are distinct.
\end{proof}

\begin{aipractices}

We supplied the model (ChatGPT 5.6 Sol) with the solution of the preceding problem and asked it
to search for an open problem that could reasonably be attacked using the
methods developed while solving Kourovka Problem~14.85 and presented in
Section~\ref{sec:problem-14-85}.  After surveying many possible candidates, the
model selected the present problem, reformulated it in different terms, and
constructed a counterexample.  A useful strategy for researchers is therefore
to ask the model what related problems it might be able to solve by adapting a
successful method and modifying the constructions used in its proof.
\end{aipractices}

\section{Derived subgroups as Frattini subgroups}
\label{sec:problem-16-11}

For a finite \(p\)-group \(H\), its \emph{Frattini subgroup} \(\Phi(H)\) is
the intersection of all maximal subgroups of \(H\).

\begin{Question}[{\cite[Problem~16.11]{kourovka21}}]
Let \(G\) be a finite \(p\)-group.  Does there always exist a finite
\(p\)-group \(H\) such that \(\Phi(H)\cong [G,G]\)?
\end{Question}

\begin{Answer}
\label{ans:problem-16-11}
No.  For \(p=2\), one may take
\(G=\texttt{SmallGroup(256,511)}\).
\end{Answer}

The obstruction used below is due to van der Waall and de Nijs
\cite[Theorem~1.15]{vanderWaallDeNijs1995}.  It remains to identify their
exceptional group \(32/40\) as the derived subgroup of
\(\texttt{SmallGroup(256,511)}\).

\subsection{Solution}

\begin{theorem}[{van der Waall--de Nijs
\cite[Theorem~1.15]{vanderWaallDeNijs1995}}]
\label{thm:hall-senior-32-40}
The Hall--Senior group \(32/40\) is not isomorphic to \(\Phi(H)\) for any
finite \(2\)-group \(H\).
\end{theorem}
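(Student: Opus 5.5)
The plan is to reproduce the structure of the van der Waall--de Nijs obstruction rather than invoke it as a black box. We argue by contradiction. Suppose \(H\) is a finite \(2\)-group with \(\Phi(H)\cong P\), where \(P\) denotes the Hall--Senior group \(32/40\) of order \(32\).

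The first step collects the invariants of \(P\) that the argument needs. These are its centre, derived subgroup, \(\Phi(P)\), \(\Omega_1\), \(\mho_1(P)=\langle x^2:x\in P\rangle\), and its characteristic subgroups. We also record \(\operatorname{Aut}(P)\) and, most importantly, its image in \(\operatorname{Out}(P)\). This data can be read off from the Hall--Senior tables or verified by machine, for instance with \texttt{SmallGroup} identification and \texttt{AutomorphismGroup} in GAP.

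The second step establishes the general constraints satisfied by \(N=\Phi(H)\) for a \(2\)-group \(H\). Since \(\Phi(H)=H'\mho_1(H)=\mho_1(H)\), the group \(N\) is generated by the squares of \(H\). Furthermore, \(H\) acts on \(N\) by conjugation, and the induced map \(H\to\operatorname{Aut}(N)\) has the following properties:
\begin{itemize}
\item \(N\) acts by inner automorphisms;
\item for every \(h\in H\), the element \(h^2\in N\), so the automorphism induced by \(h\) squares to an inner automorphism of \(N\);
\item \([N,H]\le\Phi(N)\)-type containments hold, for example \([\Phi(H),H]\le\Phi(\Phi(H))\cdot\gamma_3\)-style relations.
\end{itemize}
The standard lemmas used here include the following. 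A Frattini subgroup cannot be cyclic modulo a suitable central subgroup. In addition, \(Z(N)\cap\) characteristic pieces must be consistent with \(N\) being a product of squares: every generator of \(N/\Phi(N)\) must be realized as the image of some \(h^2\), where \(h\) induces an automorphism \(\alpha\), and the relation \((h)^2=n\) forces \(\alpha^2=\operatorname{inn}(n)\) together with \(\alpha(n)=n\).

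The third step, which we expect to be the main obstacle, checks that no choice of automorphisms is compatible with these requirements. Concretely, we consider the subgroup \(A\le\operatorname{Aut}(P)\) generated by the images of \(H\). Each generator of \(P/\Phi(P)\) must arise as a square class \(h^2\) with \(h\) inducing some \(\alpha\in A\) that fixes \(n=h^2\) and satisfies \(\alpha^2=\operatorname{inn}(n)\). We therefore enumerate, up to conjugacy in \(\operatorname{Aut}(P)\), all pairs \((\alpha,n)\) of this kind, meaning pairs with \(\alpha\) a \(2\)-element of \(\operatorname{Aut}(P)\), \(\alpha(n)=n\), and \(\alpha^2=\operatorname{inn}(n)\). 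We then show that the resulting set of elements \(n\) together with \(\mho_1\) of \(P\) does not generate \(P\). This is the structural heart of the argument: some generator of \(P/\Phi(P)\) is never a "twisted square."

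The search is finite, so it can be done by hand from the known structure of \(\operatorname{Out}(P)\) or by a short GAP computation over \(\operatorname{Aut}(P)\). If this local test turns out not to be strong enough, the fallback is to bound \(|H:N|\le 2^{d}\) via the action on \(N/\Phi(N)\) and then search the candidate extensions directly. Either way, a contradiction follows, and hence \(P\) is not isomorphic to \(\Phi(H)\) for any finite \(2\)-group \(H\), as the statement claims.
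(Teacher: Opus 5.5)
\paragraph{Verdict.} Your proposal is a strategy rather than a proof. (For comparison, the paper gives no proof of this statement; it quotes van der Waall--de Nijs.)

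\paragraph{The gap.} Everything specific to \(32/40\) is deferred to your third step. There you announce that an enumeration of pairs \((\alpha,n)\) ``shows'' the twisted squares do not generate \(P\). But you never identify \(P\), never compute anything about \(\operatorname{Aut}(P)\), and never exhibit the obstruction. You also explicitly allow that the test might fail.

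The fallback does not fill this hole. Bounding \(|H:N|\) ``via the action on \(N/\Phi(N)\)'' is not a valid argument: for fixed \(\Phi(H)\) the rank of \(H\) is unbounded (replace \(H\) by \(H\times C_2^r\)). The correct reduction is to choose \(h_1,\dots,h_d\) whose squares span \(N/\Phi(N)\) and pass to \(\langle h_1,\dots,h_d\rangle\), whose Frattini subgroup is still \(N\). Even then, ``search the candidate extensions'' is an unperformed computation.

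Parts of your second step are also unusable as stated:
\begin{itemize}
\item \([\Phi(H),H]\le\Phi(\Phi(H))\) is false in general. Take \(H=C_2^3\rtimes C_4\) with the generator acting as a full unipotent Jordan block; then \(\Phi(H)\) is elementary abelian but not central.
\item Only a spanning set of \(N/\Phi(N)\), not every generator, need come from squares.
\end{itemize}
The only facts you need are \(\Phi(H)=\langle h^2:h\in H\rangle\), and that conjugation \(\alpha_h\) by \(h\) fixes \(h^2\) and satisfies \(\alpha_h^2=\operatorname{inn}(h^2)\).

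\paragraph{How to close it.} Your local test does in fact suffice for this group, but the computation must be written out.

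One checks that \(P=\texttt{SmallGroup(32,32)}\) is the class-two group of order \(32\) with \(Z(P)=P'=\Phi(P)=\langle z_1,z_2\rangle\cong C_2^2\). It is generated by \(x_1,x_2,x_3\) with \([x_1,x_2]=1\), \([x_1,x_3]=z_1\), \([x_2,x_3]=z_2\), \(x_1^2=z_1\), \(x_2^2=z_1z_2\), \(x_3^2=z_2\). It has a unique abelian maximal subgroup \(A=\langle x_1,x_2,z_1,z_2\rangle\cong C_4\times C_4\). The image of \(\operatorname{Aut}(P)\) in \(\operatorname{GL}(P/\Phi(P))\) is cyclic of order \(4\), generated by the map induced by \(x_1\mapsto x_1\), \(x_2\mapsto x_1x_2\), \(x_3\mapsto x_3x_2\). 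Its unique involution fixes exactly \(A/\Phi(P)\).

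Now let \(n\notin Z(P)\), and let \(\alpha(n)=n\) with \(\alpha^2=\operatorname{inn}(n)\neq1\).
\begin{itemize}
\item Inner automorphisms act trivially on \(P/\Phi(P)\), so \(\alpha\) induces an element of order at most \(2\) there.
\item It is not the identity. Automorphisms trivial on \(P/\Phi(P)=P/Z(P)\) have the form \(x\mapsto x\zeta(x)\) with \(\zeta\in\operatorname{Hom}(P,Z(P))\). Since \(\zeta\) kills \(\Phi(P)\) and \(Z(P)\) is elementary abelian, these square to \(1\).
\item Hence \(\alpha\) induces the involution, and \(\alpha(n)=n\) forces \(n\in A\).
\end{itemize}
Identifying \(N=\Phi(H)\) with \(P\), every square of \(H\) therefore lies in \(A\). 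So \(N=\langle h^2:h\in H\rangle\le A<N\), a contradiction.
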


\begin{lemma}
\label{lem:derived-256-511}
If \(G=\texttt{SmallGroup(256,511)}\), then its derived subgroup
\(D=G'\) satisfies\footnote{The output
\(\texttt{IdGroup}(D)=[32,32]\) means that \(D\) is the thirty-second
isomorphism type of order \(32\) in the GAP Small Groups Library
\cite{SmallGrp}.  The output \(\texttt{IdTWGroup}(D)=[32,40]\) means that
\(D\) is group \(32/40\) in the Thomas--Wood catalogue
\cite{SONATA,ThomasWood1980}; this is the Hall--Senior numbering used in
\cite[p.~520]{vanderWaallDeNijs1995}.}

\[
|D|=32,\qquad
\texttt{IdGroup}(D)=[32,32],\qquad
\texttt{IdTWGroup}(D)=[32,40].
\]
Consequently, \(G'\) is the Hall--Senior group \(32/40\).
\end{lemma}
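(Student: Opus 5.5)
This lemma is a finite computation, and I would prove it as a certified computer calculation supported by a few structural cross-checks. \textbf{First step.} In GAP, construct \(G=\texttt{SmallGroup(256,511)}\) from the Small Groups Library \cite{SmallGrp} and compute \(D=\texttt{DerivedSubgroup}(G)\). Confirm \(|D|=32\) and \(\texttt{IdGroup}(D)=[32,32]\). Since the library identification of groups of order \(32\) is deterministic, these two facts pin down the isomorphism type of \(D\).

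\textbf{Second step.} Translate from the Small Groups numbering to the Hall--Senior numbering used in \cite[p.~520]{vanderWaallDeNijs1995}. For this I would call \(\texttt{IdTWGroup}\) from the SONATA package \cite{SONATA}, which implements the Thomas--Wood catalogue \cite{ThomasWood1980} and hence the Hall--Senior labels, and check that it returns \([32,40]\).

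\textbf{Main obstacle.} The risky point is this translation. Numbering conventions between catalogues are easy to get wrong, and a mislabelled group would make Theorem~\ref{thm:hall-senior-32-40} inapplicable. So I would not rely on \(\texttt{IdTWGroup}\) alone. I would independently compare invariants of \(D\) with the description of \(32/40\) in the Hall--Senior tables and in \cite{vanderWaallDeNijs1995}:
\begin{itemize}
\item nilpotency class and exponent;
\item the orders and isomorphism types of \(Z(D)\), \(D'\) and \(\Phi(D)\);
\item the abelianization \(D/D'\);
\item the number of elements of each order;
\item the rank-two family (\(\Gamma\)-type) the group belongs to.
\end{itemize}
Among the \(51\) groups of order \(32\), these invariants should single out one group, and that group should agree with both \([32,32]\) and \(32/40\). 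For extra confidence I would also write down an explicit presentation of \(D\) from GAP and check that it matches a published presentation of \(32/40\), by computing an isomorphism with \(\texttt{IsomorphismGroups}\).

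\textbf{Conclusion.} Once \(D\cong 32/40\) is established, the final sentence of the lemma is immediate. Combined with Theorem~\ref{thm:hall-senior-32-40}, this gives the negative answer: there is no finite \(2\)-group \(H\) with \(\Phi(H)\cong G'\).
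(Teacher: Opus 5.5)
Your proposal is correct and matches the paper's proof, which is the same GAP computation: \texttt{DerivedSubgroup} of \texttt{SmallGroup(256,511)}, then \texttt{Size}, \texttt{IdGroup} (giving \([32,32]\)) and SONATA's \texttt{IdTWGroup} (giving \([32,40]\)), with software and package versions recorded. The paper does not carry out your extra invariant and presentation cross-checks; it relies on \texttt{IdTWGroup} together with the stated identification of the Thomas--Wood numbering with the Hall--Senior numbering used by van der Waall and de Nijs.
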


\begin{proof}
The following transcript records software and package versions and
performs both catalogue identifications \cite{GAP4,SmallGrp,SONATA}.

\begin{Verbatim}[commandchars=\\\{\}]
gap> GAPInfo.Version;
\textcolor{gray}{"4.15.1"}
gap> LoadPackage("smallgrp");;
gap> LoadPackage("sonata");;
gap> PackageInfo("smallgrp")[1].Version;
\textcolor{gray}{"1.5.4"}
gap> PackageInfo("sonata")[1].Version;
\textcolor{gray}{"2.9.7"}
gap> G := SmallGroup(256,511);;
gap> D := DerivedSubgroup(G);;
gap> Size(D);
\textcolor{gray}{32}
gap> IdGroup(D);
\textcolor{gray}{[ 32, 32 ]}
gap> IdTWGroup(D);
\textcolor{gray}{[ 32, 40 ]}
\end{Verbatim}

\end{proof}

\begin{proof}[Proof of Answer~\ref{ans:problem-16-11}]
By Lemma~\ref{lem:derived-256-511}, the derived subgroup of the indicated
finite \(2\)-group is the Hall--Senior group \(32/40\).  By
Theorem~\ref{thm:hall-senior-32-40}, it is not isomorphic to the Frattini
subgroup of any finite \(2\)-group.
\end{proof}

\begin{aipractices}
\label{subsec:problem-16-11-ai-practices}

We simply copied the formulation of the problem and asked the model
(ChatGPT 5.6 Sol) to solve
it.  Sometimes the stupidest method works.
\end{aipractices}

\section{Nilpotent Camina groups of unbounded class}
\label{sec:problem-17-47}

A group \(G\) is said to be a \emph{Camina group} if \(G'\neq G\) and
\(x^G=xG'\) for every \(x\notin G'\), where \(x^G\) denotes the conjugacy
class of \(x\).  Dark and Scoppola proved that every finite nilpotent Camina
group has nilpotency class at most three \cite{DarkScoppola1996}.  Muktibodh
and Ghate asked whether any bound remains valid without the finiteness
assumption \cite[p.~256]{MuktibodhGhate2013}.

\begin{Question}[{\cite[Problem~17.47]{kourovka21}}]
Let \(G\) be a nilpotent group such that \(x^G=xG'\) for every
\(x\in G\setminus G'\).  Is the nilpotency class of \(G\) bounded?
\end{Question}

\begin{Answer}
\label{ans:problem-17-47}
No.  For every integer \(c\geq2\), there exists a nilpotent Camina group of
class exactly \(c\).
\end{Answer}

\subsection{Solution}

We use \([g,h]=g^{-1}h^{-1}gh\) for group commutators.

A \emph{differential field} is a field equipped with a \emph{derivation}, that is, an
additive map \(\partial\) satisfying
\(\partial(ab)=a\partial(b)+b\partial(a)\).  We first construct one with a
surjective derivation.

\begin{lemma}
\label{lem:camina-differential-field}
There is a field \(K\) of characteristic zero with a surjective derivation
\(\partial\colon K\to K\).  For such a field, the map

\[
K\longrightarrow K,\qquad
b\longmapsto \beta(a,b):=a\partial(b)-b\partial(a)
\]
is surjective for every \(a\in K^\times\).
\end{lemma}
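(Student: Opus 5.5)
The lemma has two parts: the existence of a differential field with surjective derivation, and the surjectivity of \(\beta(a,\cdot)\). I will prove the second part first, because it is a short computation. Fix \(a\in K^\times\) and write \(b=ay\) with \(y\in K\). Then
\[
\beta(a,ay)=a\bigl(a\partial(y)+y\partial(a)\bigr)-ay\,\partial(a)=a^2\partial(y).
\]
Given a target \(t\in K\), surjectivity of \(\partial\) provides \(y\) with \(\partial(y)=t/a^2\). Setting \(b=ay\) then gives \(\beta(a,b)=t\). Nothing beyond the Leibniz rule is used here.

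For existence, I will build \(K\) as the union of an increasing chain of differential fields \(K_0\subseteq K_1\subseteq\cdots\), each derivation extending the previous one. Start with \(K_0=\mathbb Q\) and the zero derivation. Given \((K_m,\partial_m)\), let \(K_{m+1}=K_m(T_c : c\in K_m)\), the field of rational functions in algebraically independent indeterminates indexed by the elements of \(K_m\). A derivation on \(K_m\) extends uniquely to a derivation on the polynomial ring once the values on the indeterminates are prescribed; prescribe \(\partial(T_c)=c\). The quotient rule then extends it uniquely to the fraction field. The union \(K=\bigcup_m K_m\) is a field of characteristic zero. The derivations are compatible, so they glue to a derivation \(\partial\) on \(K\). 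Every \(c\in K\) lies in some \(K_m\) and equals \(\partial(T_c)\) with \(T_c\in K_{m+1}\), so \(\partial\) is surjective.

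The only step needing care is the extension of a derivation to a purely transcendental extension with arbitrary values on the transcendental generators. This is standard: define \(\partial\) on a polynomial \(P\) by applying \(\partial_m\) to the coefficients and adding \(\sum_c (\partial P/\partial T_c)\,c\). Check the Leibniz rule, and pass to fractions by \(\partial(P/Q)=(Q\partial P-P\partial Q)/Q^2\). Only finitely many variables occur in any given element, so the infinite index set causes no difficulty. I therefore expect no real obstacle; the main thing to verify is well-definedness in this construction.
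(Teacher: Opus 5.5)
Your proposal is correct and follows the paper's proof: the same tower of purely transcendental extensions with $\partial(T_c)=c$, and the same substitution $b=ay$ giving $\beta(a,ay)=a^2\partial(y)$. The only difference is that you start from $\mathbb Q$ with the zero derivation rather than $\mathbb Q(t)$ with $d/dt$, and this does not affect the argument.
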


\begin{proof}
Start with \(K_0=\mathbb Q(t)\) and \(\partial_0=d/dt\).  Having constructed
\((K_n,\partial_n)\), adjoin algebraically independent elements \(X_f\), one
for each \(f\in K_n\), and extend the derivation to

\[
K_{n+1}=K_n(X_f:f\in K_n),
\qquad \partial_{n+1}(X_f)=f.
\]

Then \(K=\bigcup_{n\geq0}K_n\) carries a surjective derivation \(\partial\).
If \(a\neq0\) and \(z\in K\), choose \(u\in K\) with
\(\partial(u)=z/a^2\).  For \(b=au\), one has \(\beta(a,b)=z\).
\end{proof}

Fix \(c\geq2\).  Regard

\[
L_c=Ke_1\oplus\cdots\oplus Ke_c
\]
as a vector space over \(\mathbb Q\).  For notational convenience, set
\(e_i=0\) for \(i>c\).  Define a \(\mathbb Q\)-bilinear skew-symmetric bracket by

\[
\begin{aligned}
[ae_1,be_1]&=\beta(a,b)e_2,\\
[ae_1,be_i]&=ab\,e_{i+1}, && i\geq2,\\
[ae_i,be_j]&=0, && i,j\geq2.
\end{aligned}
\]
Put

\[
F^rL_c=\bigoplus_{i=r}^c Ke_i,
\qquad F^{c+1}L_c=0.
\]

Give \(Ke_i\) degree \(i\); the subspaces \(F^rL_c\) form the associated
descending filtration.  Write \(L_c'=[L_c,L_c]\) for the \emph{derived Lie
algebra}.

\begin{lemma}
The bracket above makes \(L_c\) a metabelian nilpotent Lie algebra over
\(\mathbb Q\) of class \(c\), and

\[
L_c'=F^2L_c.
\]
\end{lemma}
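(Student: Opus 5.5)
The plan is to check the Lie algebra axioms directly on homogeneous elements and then read off the derived algebra and the nilpotency class from the grading.

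\emph{Well-definedness and skew-symmetry.} The bracket is given on pairs of homogeneous elements \(ae_i, be_j\). It is \(\mathbb Q\)-bilinear because \(\beta\) is \(\mathbb Q\)-bilinear (\(\partial\) is additive) and \(ab\) is bilinear. Skew-symmetry needs \(\beta(a,b)=-\beta(b,a)\), which is immediate from the formula, and we set \([be_i,ae_1]=-ab\,e_{i+1}\) for \(i\geq2\). Also \([ae_1,ae_1]=\beta(a,a)e_2=0\), so the bracket is alternating.

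\emph{Jacobi identity.} Since the bracket is trilinear, it suffices to check Jacobi on homogeneous triples \(x=ae_i\), \(y=be_j\), \(z=ce_k\). Every bracket of two elements both of degree \(\geq2\) vanishes, and any bracket lands in degree \(\geq2\). So every double bracket \([[u,v],w]\) vanishes unless \(w\) has degree \(1\) and \([u,v]\neq0\).
\begin{itemize}
\item If at most one of \(i,j,k\) equals \(1\), each inner bracket either is zero or pairs a degree-\(1\) element with one of degree \(\geq2\). In the latter case the outer bracket pairs two elements of degree \(\geq2\), so all terms vanish.
\item If exactly two indices equal \(1\), say \(x=ae_1\), \(y=be_1\), \(z=ce_k\) with \(k\geq2\), then \([[x,y],z]=0\) since both entries have degree \(\geq2\). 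The remaining terms are \([[y,z],x]=-a(bc)e_{k+2}\) and \([[z,x],y]=b(ac)e_{k+2}\) (using \([z,x]=-ac\,e_{k+1}\), \([[z,x],y]=-[y,[z,x]]=b\,ac\,e_{k+2}\)). These cancel by commutativity of \(K\).
\item If all three indices equal \(1\), the terms are \(-\bigl(a\beta(b,c)+b\beta(c,a)+c\beta(a,b)\bigr)e_3\). Expanding gives \(a b\partial c-ac\partial b+bc\partial a-ba\partial c+ca\partial b-cb\partial a=0\).
\end{itemize}
This last case is the only place the special form of \(\beta\) matters, and I expect it to be the main point to verify carefully; it works because \(\beta(a,b)=a\partial b-b\partial a\) is built from a derivation.

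\emph{Structure.} Brackets land in \(F^2L_c\), so \(L_c'\subseteq F^2L_c\). Conversely, Lemma~\ref{lem:camina-differential-field} gives \(Ke_2=\beta(1,K)e_2\subseteq L_c'\), and \([e_1,be_i]=be_{i+1}\) gives \(Ke_{i+1}\subseteq L_c'\) for \(i\geq2\). Hence \(L_c'=F^2L_c\). Since brackets within \(F^2L_c\) vanish, \(L_c'\) is abelian and \(L_c\) is metabelian.

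\emph{Nilpotency class.} By induction using the grading, \(\gamma_r(L_c)\subseteq F^rL_c\), and \(F^{c+1}L_c=0\), so the class is at most \(c\). Iterating \([e_1,\cdot\,]\) from \(e_2\) shows \(e_c\in\gamma_c(L_c)\), which is nonzero, so the class is exactly \(c\). The same computations show \(\gamma_r(L_c)=F^rL_c\) for \(2\leq r\leq c\).
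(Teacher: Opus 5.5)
Your proposal is correct and follows essentially the same route as the paper. You reduce the Jacobi identity to the same two nontrivial cases: three degree-one elements, where the derivation property of \(\partial\) makes \(a\beta(b,c)+b\beta(c,a)+c\beta(a,b)\) vanish, and two degree-one elements with one of higher degree, where the terms \(\pm abc\,e_{k+2}\) cancel. You then use surjectivity of \(\partial\) together with \([Ke_1,Ke_i]=Ke_{i+1}\) to obtain \(L_c'=F^2L_c\) and \(\gamma_r(L_c)=F^rL_c\).

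The differences are only cosmetic. You spell out bilinearity and skew-symmetry explicitly, and you use \(\beta(1,K)=\partial(K)=K\) where the paper uses the general surjectivity of \(b\mapsto\beta(a,b)\).
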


\begin{proof}
Only two types of Jacobi identities are nontrivial.  For
\(a,b,d,u\in K\) and \(i\geq2\), they reduce respectively to

\[
a\beta(b,d)+b\beta(d,a)+d\beta(a,b)=0
\]
and

\[
[ae_1,[be_1,ue_i]]+[be_1,[ue_i,ae_1]]=0.
\]

The first equality follows by expanding \(\beta\).  In the second Jacobi
identity, the omitted term is zero and the displayed terms are
\(abu e_{i+2}\) and \(-abu e_{i+2}\).  Lemma~\ref{lem:camina-differential-field} gives
\([Ke_1,Ke_1]=Ke_2\), while \([Ke_1,Ke_i]=Ke_{i+1}\) for
\(2\leq i<c\).  Hence \(L_c'=F^2L_c\), which is abelian, and
\([F^rL_c,L_c]=F^{r+1}L_c\) for \(2\leq r\leq c\).  Thus \(L_c\) is metabelian and
has nilpotency class \(c\).
\end{proof}

Let \(G_c=\exp(L_c)\) be the \emph{Baker--Campbell--Hausdorff group} of
\(L_c\): its elements are symbols \(\exp(x)\), with \(x\in L_c\), and its
multiplication is

\[
\exp(x)\exp(y)=\exp(\operatorname{BCH}(x,y)).
\]

The BCH series is finite because \(L_c\) is nilpotent.  For a group or Lie
algebra \(N\), its \emph{lower central series} is defined by
\(\gamma_1(N)=N\) and
\(\gamma_{r+1}(N)=[\gamma_r(N),N]\).

\begin{lemma}
\label{lem:camina-bch-group}
The group \(G_c\) has nilpotency class \(c\), and
\(G_c'=\exp(F^2L_c)\).
\end{lemma}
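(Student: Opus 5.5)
We will rely on the standard Mal'cev/Lazard correspondence for nilpotent Lie algebras over \(\mathbb Q\). In the BCH group, for \(x,y\in L_c\), the group commutator satisfies
\[
[\exp(x),\exp(y)]=\exp\bigl([x,y]+\text{(higher brackets in }x,y\text{)}\bigr).
\]
Here the higher terms are \(\mathbb Q\)-linear combinations of iterated Lie brackets of length at least three. More generally, \(\gamma_r(G_c)=\exp(\gamma_r(L_c))\). We will prove this directly along the filtration rather than cite it blindly, since the paper's \(L_c\) is only a \(\mathbb Q\)-algebra with explicit filtration. Because \(F^r L_c\) are ideals with \([F^r,F^s]\subseteq F^{r+s}\), the BCH formula shows that each \(\exp(F^rL_c)\) is a normal subgroup. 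It also shows that modulo \(\exp(F^{r+1}L_c)\) the product on \(\exp(F^rL_c)\) is just addition, and that \([\exp(x),\exp(y)]\equiv\exp([x,y])\) modulo \(\exp(F^{r+s+1}L_c)\) for \(x\in F^r\), \(y\in F^s\).

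\emph{Inclusions in one direction.} By the displayed congruences and induction, \(\gamma_r(G_c)\subseteq\exp(F^rL_c)\) for \(r\geq2\), and \(G_c'\subseteq\exp(F^2L_c)\). Since \(F^{c+1}L_c=0\), the class is at most \(c\).

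\emph{Reverse inclusions by downward induction.} We show \(\exp(F^rL_c)\subseteq\gamma_r(G_c)\) for \(r=c,c-1,\dots,2\). First, \(\exp(F^cL_c)\) is central. For \(2\le r\le c\), the preceding lemma gives \([F^{r-1}L_c,L_c]\supseteq Ke_r\); concretely, \(Ke_2=[Ke_1,Ke_1]\) and \(Ke_{r}=[Ke_1,Ke_{r-1}]\). Given \(z\in F^rL_c\), write \(z=\sum_i[x_i,y_i]+z'\) with \(z'\in F^{r+1}L_c\), where \(x_i\in F^{r-1}\) (for \(r=2\), \(x_i\in Ke_1\)) and \(y_i\in Ke_1\). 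By the congruences, \(\exp(z)\) equals \(\prod_i[\exp(x_i),\exp(y_i)]\) times an element of \(\exp(F^{r+1}L_c)\). Inductively that element lies in \(\gamma_{r+1}(G_c)\subseteq\gamma_r(G_c)\), and the commutators lie in \(\gamma_r(G_c)\) by the inclusion \(\exp(x_i)\in\gamma_{r-1}(G_c)\) (itself inductive from above, starting with \(\gamma_1=G_c\)). To avoid circularity, we will run the induction as an upward induction on \(r\) for \(\exp(F^{r-1})\subseteq\gamma_{r-1}\), nested with a downward induction on the error term. Since everything vanishes beyond degree \(c\), both inductions terminate. This gives \(\gamma_r(G_c)=\exp(F^rL_c)\); in particular \(G_c'=\exp(F^2L_c)\), and \(\gamma_c(G_c)=\exp(Ke_c)\neq1\), so the class is exactly \(c\).

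The main obstacle is bookkeeping the BCH error terms: we must justify that products of \(\exp\)'s of elements of \(F^r\) agree with \(\exp\) of their sum modulo \(\exp(F^{r+1})\), which holds because every higher BCH term is a bracket of length at least \(2\). We must also justify that \(\exp(F^{r+1})\) is a subgroup, so that the error terms can be absorbed. Both follow from the grading property \([F^r,F^s]\subseteq F^{r+s}\), which is immediate from the definition of the bracket.
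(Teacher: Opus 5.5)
Your proof is correct, but it takes a different route from the paper's.

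\textbf{The paper's route.} The paper does not argue along the filtration at all. It cites the general BCH--Mal'cev fact that \(\gamma_r(\exp N)=\exp(\gamma_r N)\) for every nilpotent Lie algebra \(N\) over \(\mathbb Q\), including infinite-dimensional ones (Cornulier--Tessera). It then combines this with \(\gamma_r(L_c)=F^rL_c\) from the preceding lemma.

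\textbf{Your route.} You prove this special case by hand. You use only:
\begin{itemize}
  \item the grading \([Ke_i,Ke_j]\subseteq Ke_{i+j}\);
  \item the congruence \(\exp(x)\exp(y)\equiv\exp(x+y)\) modulo \(\exp(F^{r+1}L_c)\) for \(x,y\in F^rL_c\);
  \item the congruence \([\exp x,\exp y]\equiv\exp([x,y])\) modulo \(\exp(F^{r+s+1}L_c)\). This holds because every higher term is a Lie monomial of degree at least three, and such a monomial necessarily involves both \(x\) and \(y\);
  \item the fact that every element of \(Ke_k\) is a bracket of an element of \(Ke_{k-1}\) (or of \(Ke_1\) when \(k=2\)) with an element of \(Ke_1\).
\end{itemize}

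\textbf{Comparison.} The citation buys brevity and generality. Your argument is elementary and self-contained, and it shows directly why infinite dimensionality causes no trouble here: every expansion stops at degree \(c\).

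\textbf{One point to tidy.} As first written, the sentence placing the error term in \(\gamma_{r+1}(G_c)\) is circular, as you noticed. In the repaired scheme, fix \(r\) and assume the outer hypothesis \(\exp(F^{r-1}L_c)\subseteq\gamma_{r-1}(G_c)\). The inner induction should then prove \(\exp(F^kL_c)\subseteq\gamma_r(G_c)\) for \(k=c+1,c,\dots,r\), by downward induction on \(k\). At stage \(k\), write \(z=[x,y]+z'\) with \(x\in Ke_{k-1}\) (or \(x\in Ke_1\) if \(k=2\)), \(y\in Ke_1\), and \(z'\in F^{k+1}L_c\). Since \(x\in F^{r-1}L_c\), the commutator \([\exp x,\exp y]\) lies in \(\gamma_r(G_c)\). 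The remaining factor lies in \(\exp(F^{k+1}L_c)\), which is contained in \(\gamma_r(G_c)\) by the inner hypothesis. So the error term only needs to be placed in \(\gamma_r(G_c)\), not in \(\gamma_{r+1}(G_c)\). With this formulation, your argument gives \(\gamma_r(G_c)=\exp(F^rL_c)\) for all \(r\), and hence both conclusions of the lemma.
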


\begin{proof}
Under the BCH--Mal'cev correspondence, lower central series
correspond: for every nilpotent Lie algebra \(N\) over \(\mathbb Q\), not
necessarily finite-dimensional,
\[
\gamma_r(\exp N)=\exp(\gamma_rN),\qquad r\geq1;
\]
see, for example,
\cite[Theorem~5.A.2 and Section~5.B]{CornulierTessera2017}.
Taking \(r=2\) gives \(G_c'=\exp(F^2L_c)\), and the last nonzero term of the
lower central series occurs at \(r=c\).
\end{proof}

\begin{lemma}
\label{lem:camina-conjugacy-orbit}
If \(x\in L_c\setminus F^2L_c\), then

\[
\{e^{-\operatorname{ad}y}x:y\in L_c\}=x+F^2L_c.
\]
Here the exponential is finite because \(\operatorname{ad}y\) is nilpotent.
\end{lemma}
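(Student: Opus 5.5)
Write \(x=ae_1+x'\) with \(a\in K^\times\) and \(x'\in F^2L_c\). The inclusion ``\(\subseteq\)'' is immediate. Indeed, \(e^{-\operatorname{ad}y}x-x\) is a sum of iterated brackets of length at least two, and each of these lies in \(L_c'=F^2L_c\). For the reverse inclusion I will argue by descending induction along the filtration. The precise claim is that for every \(r\) with \(2\le r\le c+1\) and every target \(z\in x+F^2L_c\), there is \(y\) with \(e^{-\operatorname{ad}y}x\equiv z \pmod{F^{r}L_c}\). The case \(r=c+1\) is exactly the lemma. The case \(r=2\) is trivial, since every \(z\) in question is congruent to \(x\) modulo \(F^2L_c\).

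The key computation is the following. For \(y\in F^{s}L_c\) with \(s\ge1\), I expect
\[
e^{-\operatorname{ad}y}x\equiv x-[y,x]\pmod{F^{s+2}L_c}.
\]
This holds because \((\operatorname{ad}y)^2x\in F^{2s+1}L_c\) when \(s\ge1\). For \(s\ge2\) even more is true: \((\operatorname{ad}y)^2x=0\), since \([F^2L_c,F^2L_c]=0\) and \([y,x]\in F^{s+1}L_c\). The linear term is surjective onto each graded piece. For \(s=1\), take \(y=be_1\); then \([be_1,ae_1]=\beta(b,a)e_2=-\beta(a,b)e_2\), and as \(b\) ranges over \(K\) this fills all of \(Ke_2\), by Lemma~\ref{lem:camina-differential-field}, since \(a\neq0\). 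For \(s\ge2\), take \(y=be_s\); then \([be_s,ae_1]=-ab\,e_{s+1}\), which fills \(Ke_{s+1}\) as \(b\) ranges over \(K\).

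Next comes the inductive step. Suppose \(e^{-\operatorname{ad}y}x\equiv z\pmod{F^{r}L_c}\). I want to correct the degree-\(r\) component by replacing \(e^{-\operatorname{ad}y}\) with \(e^{-\operatorname{ad}y'}e^{-\operatorname{ad}y}\), where \(y'\in F^{r-1}L_c\). Here I must stay inside the orbit of the statement. The cleanest way is to observe that the maps \(e^{-\operatorname{ad}y}\) form a group: by the BCH formula, \(e^{\operatorname{ad}u}e^{\operatorname{ad}v}=e^{\operatorname{ad}\operatorname{BCH}(u,v)}\). Alternatively, one can note that \(\{e^{\operatorname{ad}y}\}\) is the image of \(G_c\) under its adjoint action, which is a homomorphism. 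Granting this, the set in the statement is a single orbit of a group. Apply the key computation with \(s=r-1\) to the element \(w=e^{-\operatorname{ad}y}x\). Its leading term is still \(ae_1\), so the computation gives \(e^{-\operatorname{ad}y'}w\equiv w-[y',ae_1]\pmod{F^{r+1}L_c}\), where I use \(r+1\le s+2\) and the fact that brackets of \(y'\) with the \(F^2\)-part of \(w\) land in \(F^{r+1}L_c\). By the surjectivity just established, \(y'\) can be chosen so that \(w-[y',ae_1]\equiv z\pmod{F^{r+1}L_c}\). This completes the induction.

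I expect the main obstacle to be justifying that the inner maps compose, i.e.\ that the set in the statement is a group orbit. In infinite dimension over \(\mathbb Q\), with \(\operatorname{ad}\) nilpotent of bounded index, the formal identity \(e^{A}e^{B}=e^{\operatorname{BCH}(A,B)}\) for nilpotent commuting-free derivations in the image of \(\operatorname{ad}\) holds as an identity of finite sums. It then follows from \(\operatorname{ad}\) being a Lie homomorphism. If this step becomes awkward, I would instead construct \(y\) directly as a sum \(y=y_1+y_2+\cdots+y_{c-1}\) with \(y_s\in Ke_s\)-type components, and solve degree by degree. In that approach each new component affects only degrees at least \(s+1\), and there it contributes \(-[y_s,ae_1]\) plus terms already determined.
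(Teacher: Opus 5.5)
Your proof is correct, but your main route differs from the paper's. The paper's proof is essentially your fallback. It fixes a single \(y=b_1e_1+\cdots+b_{c-1}e_{c-1}\) and observes that the equations are triangular. The degree-two component of \(e^{-\operatorname{ad}y}x-x\) is \(\beta(a,b_1)e_2\). For \(3\le r\le c\), the only degree-\(r\) term containing \(b_{r-1}\) is \(-[b_{r-1}e_{r-1},ae_1]=ab_{r-1}e_r\), and \(b_r,\ldots,b_{c-1}\) do not occur in degree \(r\). So the \(b_i\) are solved for one after another.

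You instead treat the set as an orbit of the group \(\{e^{\operatorname{ad}y}\}\) and apply successive corrections \(y'\in F^{r-1}L_c\). At each step you need only the first-order congruence \(e^{-\operatorname{ad}y'}w\equiv w-[y',w]\pmod{F^{r+1}L_c}\). You never have to track how earlier components re-enter higher degrees through the nonlinear terms of the exponential. This is the general principle that a unipotent orbit fills the whole coset once the linearized action is onto each graded piece.

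The price is closure under composition. Your justification is right in substance, with one precision. What makes \(e^{\operatorname{ad}u}e^{\operatorname{ad}v}=e^{\operatorname{ad}\operatorname{BCH}(u,v)}\) an identity of finite sums is that every product \(\operatorname{ad}u_1\cdots\operatorname{ad}u_c\) vanishes on \(L_c\), since \(\gamma_{c+1}(L_c)=0\). It is not enough that each single \(\operatorname{ad}y\) is nilpotent. Alternatively, you can get closure from the conjugation formula \(\exp(-y)\exp(x)\exp(y)=\exp(e^{-\operatorname{ad}y}x)\) in \(G_c\), which the paper uses anyway in the proof of the Answer.

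The paper's version is more elementary and yields an explicit \(y\) supported in degrees \(1,\ldots,c-1\). Yours is cleaner per step and more robust.
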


\begin{proof}
The inclusion from left to right follows from
\(e^{-\operatorname{ad}y}x-x\in L_c'=F^2L_c\).  Write

\[
x=ae_1+x_2e_2+\cdots+x_ce_c,
\qquad a\in K^\times,
\]
and take \(y=b_1e_1+\cdots+b_{c-1}e_{c-1}\).  The degree-two component of
\(e^{-\operatorname{ad}y}x-x\) is \(\beta(a,b_1)e_2\), and hence can be
prescribed arbitrarily by Lemma~\ref{lem:camina-differential-field}.  For
\(3\leq r\leq c\), after \(b_1,\ldots,b_{r-2}\) have been chosen, the
degree-\(r\) component depends affinely on \(b_{r-1}\) with coefficient
\(a\), and is independent of \(b_r,\ldots,b_{c-1}\).  Indeed, the only
degree-\(r\) occurrence of \(b_{r-1}\) in the expansion of
\(e^{-\operatorname{ad}y}x\) is
\(-[b_{r-1}e_{r-1},ae_1]=ab_{r-1}e_r\).  Since \(a\neq0\), the coefficients
\(b_{r-1}\) may be chosen successively to prescribe every homogeneous
component.  This proves the reverse inclusion.
\end{proof}

\begin{proof}[Proof of Answer~\ref{ans:problem-17-47}]
Conjugation in \(G_c\) satisfies

\[
\exp(-y)\exp(x)\exp(y)=\exp(e^{-\operatorname{ad}y}x).
\]

Modulo \(F^2L_c\), the BCH product is addition.  Hence the map
\(\exp(x)\mapsto x+F^2L_c\) from \(G_c\) to
\((L_c/F^2L_c,+)\) is a homomorphism, and
Lemma~\ref{lem:camina-bch-group} identifies its kernel with \(G_c'\).
Consequently

\[
\exp(x)G_c'=\{\exp(z):z\in x+F^2L_c\}.
\]

If \(\exp(x)\notin G_c'\), then \(x\notin F^2L_c\), and
Lemma~\ref{lem:camina-conjugacy-orbit} gives
\(\exp(x)^{G_c}=\exp(x)G_c'\).  Moreover, \(G_c'\neq G_c\) because
\(L_c/F^2L_c\neq0\).  Thus \(G_c\) is a Camina group of class \(c\) by
Lemma~\ref{lem:camina-bch-group}.  Since \(c\geq2\) is arbitrary, no bound
exists.
\end{proof}

\begin{aipractices}

The model (ChatGPT 5.6 Sol) solved the problem using the method presented in
Subsection~\ref{subsec:problem-16-11-ai-practices}.
\end{aipractices}

\section{Minimal T-systems and negative immersions}
\label{sec:question-2-8-14}

An element of a free group is \emph{primitive} if it belongs to a free basis,
and \emph{imprimitive} otherwise.  Let \(F_n\) be a free group.  The
\emph{primitivity rank} of \(w\in F_n\) is

\[
\pi(w)=\min\{\operatorname{rk}(H):H\leq F_n,\ w\in H
\text{ and }w\text{ is imprimitive in }H\},
\]
with \(\pi(w)=\infty\) if \(w\) is primitive.  Write \(X_w\) for the
\emph{natural presentation complex} of
\(G_w=F_n/\langle\!\langle w\rangle\!\rangle\): it has one vertex, \(n\)
oriented edges corresponding to a free basis, and one \(2\)-cell attached
along \(w\).

A combinatorial map of \(2\)-complexes is an \emph{immersion} if it is
locally injective.  For a finite connected \(2\)-complex \(Y\), put
\(F=\pi_1(Y^{(1)})\), and let \(w_1,\ldots,w_s\) be its attaching words.
Following Louder and Wilton, \(Y\) is \emph{reducible} if one of the following
holds:
\begin{enumerate}[
  label=\textup{(\alph*)},
  leftmargin=2em,
  itemsep=0pt,
  topsep=0.2em
]
  \item \(F=1\) and \(s=0\);
  \item \(F\cong\mathbb Z\), \(s\leq1\), and, if \(s=1\),
  \(F=\langle w_1\rangle\);
  \item \(F=A*B\) is a nontrivial free splitting and every \(w_i\) is
  conjugate into \(A\) or \(B\).
\end{enumerate}
A finite \(2\)-complex \(X\) has \emph{negative immersions} if every immersion
\(Y\looparrowright X\) of finite connected \(2\)-complexes satisfies
\(\chi(Y)<0\) or \(Y\) is reducible.  A group is \emph{\(2\)-free} if each
of its subgroups generated by at most two elements is free.  Finally, \(X\)
has \emph{uniform negative immersions} if there is \(\varepsilon>0\) such
that every such immersion, with \(f_2(Y)\) denoting the number of \(2\)-cells
of \(Y\), satisfies

\[
\frac{\chi(Y)}{f_2(Y)}\leq-\varepsilon
\]
or \(Y\) is reducible
\cite[Proposition~3.8 and Definitions~3.20, 3.24]{LouderWilton2024}.

The following result is due to Louder and Wilton
\cite{LouderWilton2022,LouderWilton2024}.

\begin{theorem}
\label{thm:negative-immersions-primitivity-rank}
The following are equivalent: \(X_w\) has negative immersions;
\(G_w\) is \(2\)-free; and \(\pi(w)\geq3\).  If these conditions hold, then
\(X_w\) has uniform negative immersions.
\end{theorem}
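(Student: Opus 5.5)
This theorem is quoted from Louder and Wilton, so the plan is to assemble it from their results and check that their definitions agree with the ones given above. No new argument is intended. The proof will be organized as a cycle of implications plus one upgrade.

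\emph{Step 1: negative immersions imply \(\pi(w)\geq3\).} Argue by contraposition. Suppose \(\pi(w)\leq2\), so that \(w\in H\) with \(\operatorname{rk}(H)\leq2\) and \(w\) imprimitive in \(H\). Realize \(H\) by a Stallings core graph \(\Gamma\to X_w^{(1)}\), which is an immersion. Attach a \(2\)-cell to \(\Gamma\) along the closed lift of \(w\); this gives an immersion \(Y\looparrowright X_w\) with \(\chi(Y)=1-\operatorname{rk}(H)+1\geq0\). If \(\operatorname{rk}H=1\), imprimitivity means \(w\) is a proper power in \(H\), so (b) fails. If \(\operatorname{rk}H=2\) and \(Y\) satisfied (c), the single relator would be conjugate into a cyclic free factor of \(H\). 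Then \(w\) would be a proper power of a primitive element, and the rank-one case already gives \(\pi(w)=1\). Some care is needed to choose \(H\) of minimal rank so that \(Y\) is irreducible.

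\emph{Step 2: \(\pi(w)\geq3\) implies \(G_w\) is \(2\)-free.} Here I would use the one-relator theory in \cite{LouderWilton2022}. A \(2\)-generated non-free subgroup of \(G_w\) is represented, via a folding argument, by an immersion from a finite complex whose fundamental group surjects onto it. Combined with the Louder--Wilton bound on \(w\)-subgroups, this produces a subgroup of \(F_n\) of rank at most \(2\) in which \(w\) is imprimitive, a contradiction.

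\emph{Step 3: \(2\)-freeness implies negative immersions.} Take an immersion \(Y\looparrowright X_w\) with \(\chi(Y)\geq0\). The main theorem of \cite{LouderWilton2022} controls such immersions: after removing redundant \(2\)-cells, an irreducible \(Y\) with \(\chi(Y)\geq0\) has \(\pi_1\) that injects into \(G_w\) and is one-relator with at most two generators, but not free. This contradicts \(2\)-freeness.

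\emph{Step 4: the uniform statement.} This is the deepest input and I would not reprove it. I would cite \cite[Definitions~3.20, 3.24 and the corresponding theorem]{LouderWilton2024}, which shows that \(\pi(w)\geq3\) gives uniform negative immersions by a rationality/compactness argument on the \(w\)-cycles.

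The main obstacle is matching conventions: the reducibility notion (a)--(c) above must coincide with the one used in \cite{LouderWilton2024}, and the correspondence between primitivity rank and \(\chi\) must survive branched or multiple \(2\)-cells. I expect these points to be handled by \cite[Proposition~3.8]{LouderWilton2024}, and I would verify them first.
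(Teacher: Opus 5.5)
Your proposal follows the same route as the paper, which proves nothing itself: it attributes the three-way equivalence to \cite{LouderWilton2022,LouderWilton2024} and takes the definitions (reducibility, negative and uniform negative immersions) from \cite[Proposition~3.8 and Definitions~3.20, 3.24]{LouderWilton2024}, which are exactly the places you plan to check. Your Step~1 contrapositive (the core graph of a rank-\(\leq 2\) subgroup plus one \(2\)-cell along \(w\), with the cyclic-free-factor case reduced to rank one) is a correct direct argument; Steps~2--3 are only loose paraphrases, so replace them with the precise Louder--Wilton theorems, and in particular do not assert \(\pi_1\)-injectivity of an arbitrary immersed \(Y\) into \(G_w\) without a reference.
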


Two generating \(n\)-tuples of a group \(G\) are \emph{Nielsen equivalent}
if they differ by the action of \(\operatorname{Aut}(F_n)\).  They belong to
the same \emph{\(T\)-system} if an automorphism of \(G\) takes one to a
Nielsen-equivalent tuple.  A positive answer to the following question was
known for closed surface groups \cite[Question~2.8.14]{LintonNybergBrodda2025}.

\begin{Question}[{\cite[Question~2.8.14]{LintonNybergBrodda2025}}]
Let \(X\) be a one-relator complex with negative immersions.  Does
\(\pi_1(X)\) have a unique \(T\)-system of generating sets of minimal
cardinality?
\end{Question}

\begin{Answer}
\label{ans:question-2-8-14}
No.  For every \(N\geq1\), there is a three-generated \(2\)-free one-relator
group with at least \(N\) distinct \(T\)-systems of generating triples of
minimal cardinality.  Each triple is induced by a one-relator presentation
whose natural complex has uniform negative immersions.
\end{Answer}

\subsection{Solution}

We first establish isomorphisms among certain one-relator presentations.  In this
section, \(\langle\!\langle u\rangle\!\rangle\) denotes the \emph{normal closure} of
\(u\), the smallest normal subgroup containing \(u\), and
\(F_3=F(a,b,c)\).

\begin{lemma}
\label{lem:cyclic-defect}
Let \(\varphi,\psi\colon F_3\to F_3\) fix \(a,b\), and put

\[
R=(\psi\circ\varphi)(c)c^{-1},
\qquad
S=(\varphi\circ\psi)(c)c^{-1}.
\]

Then \(\varphi\) and \(\psi\) induce mutually inverse isomorphisms

\[
F_3/\langle\!\langle R\rangle\!\rangle
\cong F_3/\langle\!\langle S\rangle\!\rangle.
\]
\end{lemma}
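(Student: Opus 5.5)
Write \(N_R=\langle\!\langle R\rangle\!\rangle\) and \(N_S=\langle\!\langle S\rangle\!\rangle\). The plan is to check directly that \(\varphi\) maps \(N_R\) into \(N_S\) and \(\psi\) maps \(N_S\) into \(N_R\), so that both descend to homomorphisms
\[
\bar\varphi\colon F_3/N_R\to F_3/N_S,\qquad \bar\psi\colon F_3/N_S\to F_3/N_R,
\]
and then to verify that the two composites are the identity on generators.

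\textbf{Step 1 (well-definedness).} Since \(\varphi\) fixes \(a\) and \(b\),
\[
\varphi(R)=\varphi\psi\varphi(c)\,\varphi(c)^{-1}.
\]
Put \(w=\varphi(c)\). Modulo \(N_S\) we have \(\varphi\psi(c)\equiv c\). The endomorphism \(\varphi\psi\) fixes \(a,b\) and sends \(c\) to \(c\) modulo \(N_S\). Since \(N_S\) is normal, \(\varphi\psi(v)\equiv v \pmod{N_S}\) for every word \(v\), by induction on word length. In particular \(\varphi\psi(w)\equiv w\), so \(\varphi(R)\in N_S\). Because \(\varphi\) is a homomorphism, it carries conjugates of \(R\) to conjugates of \(\varphi(R)\), and hence \(\varphi(N_R)\subseteq N_S\). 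The same argument with the roles of \(\varphi\) and \(\psi\) exchanged gives \(\psi(N_S)\subseteq N_R\): here \(\psi(S)=\psi\varphi(\psi(c))\,\psi(c)^{-1}\), and \(\psi\varphi\) acts as the identity modulo \(N_R\).

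\textbf{Step 2 (inverse isomorphisms).} The composite \(\bar\psi\bar\varphi\) is induced by \(\psi\varphi\). This map fixes \(a\) and \(b\), and sends \(c\) to \(\psi\varphi(c)=Rc\equiv c\) modulo \(N_R\). So it is the identity on the generators of \(F_3/N_R\), hence the identity map. Symmetrically, \(\bar\varphi\bar\psi\) is the identity on \(F_3/N_S\).

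The only real point is Step 1: one must recognise that \(\varphi(R)\) has the form \((\varphi\psi)(w)\,w^{-1}\), and that an endomorphism which acts trivially on generators modulo a normal subgroup acts trivially on all words modulo it. The rest is bookkeeping.
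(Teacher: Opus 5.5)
Your proposal is correct and matches the paper's own argument. The paper likewise observes that any endomorphism \(\eta\) fixing \(a,b\) satisfies \(\eta(u)u^{-1}\in\langle\!\langle\eta(c)c^{-1}\rangle\!\rangle\), applies this to \(\varphi\circ\psi\) and \(\psi\circ\varphi\) to obtain \(\varphi(R)\in\langle\!\langle S\rangle\!\rangle\) and \(\psi(S)\in\langle\!\langle R\rangle\!\rangle\), and then checks the composites on generators; your Step~1 simply spells out that one takes \(u=\varphi(c)\) (respectively \(u=\psi(c)\)).
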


\begin{proof}
If an endomorphism \(\eta\) fixes \(a,b\), then

\[
\eta(u)u^{-1}\in
\langle\!\langle\eta(c)c^{-1}\rangle\!\rangle,
\qquad u\in F_3,
\]
because \(\eta\) induces the identity on
\(F_3/\langle\!\langle\eta(c)c^{-1}\rangle\!\rangle\).
Applying this observation to \(\varphi\circ\psi\) and
\(\psi\circ\varphi\) shows that \(\varphi(R)\) lies in the normal closure
of \(S\), and \(\psi(S)\) lies in the normal closure of \(R\).  The two maps
therefore descend.  Their composites fix \(a,b\) and fix \(c\) modulo the
corresponding relator, so they are inverse.
\end{proof}

The \emph{exponent vector} of a word in \(F(a,b,c)\) is its image in
\(F(a,b,c)^{\mathrm{ab}}\cong\mathbb Z^3\), relative to the ordered basis
\((a,b,c)\).  A \emph{cyclic reduction} of a word is a cyclically reduced
conjugate.  For a cyclic word \(z\), write \(|z|_{\mathrm{cyc}}\) for its
cyclically reduced length and let

\[
\ell_{\mathrm W}(z)=
\min_{\alpha\in\operatorname{Aut}(F_3)}
|\alpha(z)|_{\mathrm{cyc}}
\]
be its \emph{Whitehead-minimal length}.

\begin{proposition}
\label{prop:cyclic-orbit-criterion}
Let \(\varphi_1,\ldots,\varphi_m\) fix \(a,b\), and suppose that every
\(\varphi_i(c)\) has exponent vector \((0,0,1)\).  For \(1\leq i\leq m\),
let \(Q_i\) be the cyclic reduction of

\[
(\varphi_{i-1}\circ\cdots\circ\varphi_1\circ
 \varphi_m\circ\cdots\circ\varphi_i)(c)c^{-1},
\]
where an empty composition is the identity.  Assume that every \(Q_i\) is
nontrivial, that one of them has primitivity rank \(3\), and that the
numbers \(\ell_{\mathrm W}(Q_i)\) are pairwise distinct.  Then the quotients
\(F_3/\langle\!\langle Q_i\rangle\!\rangle\) are isomorphic and their
standard generating triples determine at least \(m\) distinct \(T\)-systems
of minimal cardinality.  Every corresponding presentation complex has
uniform negative immersions.
\end{proposition}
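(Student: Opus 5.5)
The plan has four parts: isomorphism of the quotients, negative immersions for every relator, minimality of the generating triples, and distinctness of the $T$-systems.

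\emph{Isomorphism.} Apply Lemma~\ref{lem:cyclic-defect} repeatedly to consecutive cyclic rotations of the composite $\Phi=\varphi_m\circ\cdots\circ\varphi_1$. For each $i$, set
\[
\varphi=\varphi_i,\qquad \psi=\varphi_{i-1}\circ\cdots\circ\varphi_1\circ\varphi_m\circ\cdots\circ\varphi_{i+1}.
\]
Then $\psi\circ\varphi$ is the composite defining $Q_i$, and $\varphi\circ\psi$ is the composite defining $Q_{i+1}$, indices taken mod $m$. Cyclic reduction changes the relator only by conjugation, so it does not change the normal closure. The lemma therefore gives $F_3/\langle\!\langle Q_i\rangle\!\rangle\cong F_3/\langle\!\langle Q_{i+1}\rangle\!\rangle$, and all the quotients form a single group $G$.

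\emph{Negative immersions.} Suppose $\pi(Q_{i_0})=3$. By Theorem~\ref{thm:negative-immersions-primitivity-rank}, $G$ is $2$-free. Since $2$-freeness is a property of the group, every $X_{Q_i}$ has negative immersions, hence $\pi(Q_i)\ge3$ for all $i$, and again by the theorem every $X_{Q_i}$ has uniform negative immersions.

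\emph{Minimal cardinality.} Each $Q_i$ has exponent vector zero, because the $\varphi_j$ fix $a,b$ and send $c$ to words with exponent vector $(0,0,1)$. Thus $G^{\mathrm{ab}}\cong\mathbb Z^3$, so $G$ needs at least three generators, and the standard triples are of minimal cardinality.

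\emph{Distinct $T$-systems.} Suppose the standard triples $\tau_i,\tau_j$ of the presentations lie in the same $T$-system. Then there are an isomorphism $\theta\colon F_3/\langle\!\langle Q_i\rangle\!\rangle\to F_3/\langle\!\langle Q_j\rangle\!\rangle$ and $\alpha\in\operatorname{Aut}(F_3)$ such that $\theta\circ\pi_i=\pi_j\circ\alpha$, where $\pi_i,\pi_j$ are the quotient maps. It follows that $\alpha$ carries $\langle\!\langle Q_i\rangle\!\rangle$ onto $\langle\!\langle Q_j\rangle\!\rangle$. By Magnus' theorem on one-relator groups with the same normal closure, $\alpha(Q_i)$ is conjugate to $Q_j^{\pm1}$. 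Hence
\[
\ell_{\mathrm W}(Q_i)=\ell_{\mathrm W}(\alpha(Q_i))=\ell_{\mathrm W}(Q_j^{\pm1})=\ell_{\mathrm W}(Q_j),
\]
since $\ell_{\mathrm W}$ is invariant under automorphisms and inversion. This contradicts the pairwise distinctness of these numbers, so the $m$ triples give at least $m$ distinct $T$-systems.

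The main obstacle I expect is making the $T$-system step precise. I need to check that Nielsen equivalence of the images together with an automorphism of $G$ really does lift to an automorphism of $F_3$ that matches the kernels. The intended argument is to lift the Nielsen moves to $F_3$ and compose with the identification of the bases, and then $\alpha$ has exactly the required property. It also has to be confirmed that Magnus' theorem is applied in the form stating that equal normal closures force conjugacy up to inversion. The remaining steps are routine once the earlier results are invoked.
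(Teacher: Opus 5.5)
Your proposal is correct and follows essentially the same route as the paper's proof: you use adjacent cyclic rotations via Lemma~\ref{lem:cyclic-defect}, zero exponent vectors for minimality, transfer of $2$-freeness across the isomorphic quotients for uniform negative immersions, and Magnus's normal-closure theorem together with $\ell_{\mathrm W}$ to separate the $T$-systems. The obstacle you flag is immediate, because Nielsen equivalence is defined by the $\operatorname{Aut}(F_3)$-action on homomorphisms $F_3\to G$, so $\theta\circ\pi_i=\pi_j\circ\alpha$ gives $\alpha(\langle\!\langle Q_i\rangle\!\rangle)=\langle\!\langle Q_j\rangle\!\rangle$ directly; the paper also records $\pi(Q_i)=3$, which your direct use of the equivalence in Theorem~\ref{thm:negative-immersions-primitivity-rank} does not need.
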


\begin{proof}
Splitting a cyclic product into one factor and the remaining factors,
Lemma~\ref{lem:cyclic-defect} gives isomorphisms between adjacent cyclic
rotations.  Every cyclic composition induces the identity on abelianization.
Thus every \(Q_i\) has zero exponent vector, so each quotient has
abelianization \(\mathbb Z^3\) and its standard triple is minimal.

By Theorem~\ref{thm:negative-immersions-primitivity-rank}, the quotient
defined by a \(Q_i\) of primitivity rank \(3\) is \(2\)-free.  This is an
isomorphism invariant, so the same theorem gives \(\pi(Q_i)>2\) for every
\(i\).  A primitive word has an exponent vector with coprime coordinates,
and in particular a nonzero exponent vector.
Thus each nontrivial \(Q_i\) is imprimitive in \(F_3\), whence
\(\pi(Q_i)=3\).  The same theorem gives uniform negative immersions.

For normal subgroups \(N,N'\trianglelefteq F_3\), the standard triples of
\(F_3/N\) and \(F_3/N'\) represent the same \(T\)-system precisely when there
are \(\alpha\in\operatorname{Aut}(F_3)\) and an isomorphism
\(\beta\colon F_3/N\to F_3/N'\) such that
\(\beta\circ q=q'\circ\alpha\), where \(q,q'\) are the quotient maps.
Equivalently, \(\alpha(N)=N'\).
If the triples defined by \(Q_i\) and \(Q_j\) belonged to the same
\(T\)-system, then for some \(\alpha\in\operatorname{Aut}(F_3)\), the words
\(\alpha(Q_i)\) and \(Q_j\) would have the same normal closure.  Magnus's
normal-closure theorem implies that \(\alpha(Q_i)\) is conjugate to
\(Q_j^{\pm1}\) \cite[Section~2.8.3]{LintonNybergBrodda2025}.  This would imply
\(\ell_{\mathrm W}(Q_i)=\ell_{\mathrm W}(Q_j)\), a contradiction.
\end{proof}

We now give the family.  In displayed words, we use typewriter letters and
denote inverses by capitals.  Define endomorphisms
\(\rho,\sigma\), fixing \(\mathtt a,\mathtt b\), by

\[
\rho(\mathtt c)=\mathtt{CAcac},
\qquad
\sigma(\mathtt c)=\mathtt{BCbcc}.
\]

For \(m\geq3\) and \(0\leq j<m\), let \(R_{m,j}\) be the cyclic reduction
of

\[
(\rho^j\circ\sigma\circ\rho^{m-1-j})(\mathtt c)\mathtt C.
\]

These are the relators \(Q_i\) from
Proposition~\ref{prop:cyclic-orbit-criterion} for a cyclic list with
\(m-1\) copies of \(\rho\) and one \(\sigma\).

\begin{lemma}
\label{lem:tsystem-whitehead-calculation}
For every \(m\geq3\),

\[
\ell_{\mathrm W}(R_{m,0})
=13\cdot3^{m-2}-5
\]
and, for \(1\leq j<m\),

\[
\ell_{\mathrm W}(R_{m,j})
=5\cdot3^{m-2}+4\cdot3^{m-1-j}-1.
\]

In particular, these \(m\) lengths are pairwise distinct.
\end{lemma}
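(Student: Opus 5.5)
Two things must be shown: that each \(R_{m,j}\) is Whitehead-minimal, so that \(\ell_{\mathrm W}(R_{m,j})=|R_{m,j}|_{\mathrm{cyc}}\), and that the cyclically reduced length has the closed form in the statement. Our approach is to compute the words explicitly, then certify minimality with Whitehead's peak-reduction criterion.

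\textbf{Step 1: the shape of the iterates.} First we compute \(\rho^k(\mathtt c)\) in closed form. Since \(\rho(\mathtt c)=\mathtt{CAcac}\) contains \(\mathtt c\) three times, \(\rho^k(\mathtt c)\) is built from \(3^k\) copies of \(\rho^{k-1}(\mathtt c)^{\pm1}\), separated by the letters \(\mathtt a^{\pm1}\). We will show by induction on \(k\) that the only free cancellation in this substitution is a bounded amount at the \(\mathtt{CA}\mid\mathtt c\) junctions. A clean way to organize this is to write \(\rho^k(\mathtt c)=U_k\,\mathtt c\,V_k\) with explicit prefix and suffix words. The same treatment applies to \(\sigma\rho^{k}(\mathtt c)\) and to \(\rho^j\sigma\rho^{m-1-j}(\mathtt c)\). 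For the latter, \(\sigma\) is applied once inside, and the outer \(\rho^j\) triples the length contributed by the \(\mathtt b\)-letters. This explains the factor \(4\cdot 3^{m-1-j}\) in the formula, while the term \(5\cdot3^{m-2}\) comes from the \(\rho\)-letters. Finally, multiplying by \(\mathtt C\) and cyclically reducing removes a short, explicitly computable overlap. This yields \(|R_{m,j}|_{\mathrm{cyc}}\) as a linear recurrence in \(m\), whose solution should be the two stated expressions. The case \(j=0\) differs because \(\sigma\) is outermost, which changes the cancellation pattern.

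\textbf{Step 2: Whitehead minimality.} By Whitehead's peak-reduction theorem, a cyclic word is minimal in its \(\operatorname{Aut}(F_3)\)-orbit if and only if no Whitehead automorphism of the second kind strictly shortens it. Equivalently, the Whitehead graph must have no cut vertex, or more precisely, for every cut \((A,a)\) the capacity condition \(|A,A^c|\geq \deg(a)\) must hold. We therefore plan to compute the Whitehead graph of \(R_{m,j}\), that is, the multiplicities of each two-letter subword \(xy\) in the cyclic word, as explicit functions of \(m\) and \(j\) obtained from the recursive structure in Step 1. These counts satisfy linear recurrences, so they are of the form \(\alpha 3^{m}+\beta 3^{m-j}+\gamma\). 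With only six vertices \(\mathtt a^{\pm1},\mathtt b^{\pm1},\mathtt c^{\pm1}\), there are finitely many cut types. We will verify the inequality \(|A,A^c|\geq\deg(a)\) for each type: for general \(m\) and \(j\) this follows from the recurrence formulas, and the base cases \(m=3,4\) can be checked by direct computation.

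\textbf{Step 3: distinctness.} Once the formulas are established, distinctness is elementary arithmetic. For \(1\leq j<m\) the values are strictly decreasing in \(j\). The largest of them is \(5\cdot3^{m-2}+4\cdot3^{m-2}-1=9\cdot3^{m-2}-1\), which is smaller than \(13\cdot3^{m-2}-5\) since \(4\cdot3^{m-2}>4\).

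\textbf{Main obstacle.} We expect the hardest part to be the uniform control of cancellation in Step 1, together with the Whitehead-graph counts in Step 2 that depend on it. To handle this, we would first compute small cases (\(m=3,4,5\)) by machine to guess the two-letter-subword counts. We would then prove those counts by induction using the decomposition \(\rho^k(\mathtt c)=U_k\mathtt cV_k\). If the direct cut-vertex check becomes too unwieldy, the fallback is to exhibit the Whitehead graph as containing a \(2\)-connected spanning subgraph with sufficient edge multiplicities, which already forces all the cut inequalities.
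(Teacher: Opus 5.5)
\textbf{Gap: \(R_{m,j}\) is not Whitehead-minimal.}
Your plan assumes that each \(R_{m,j}\) is already Whitehead-minimal, so that \(\ell_{\mathrm W}(R_{m,j})=|R_{m,j}|_{\mathrm{cyc}}\). This is false, and Steps~1 and~2 fail with it.

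Since \(\rho^{k+1}(\mathtt c)=\rho^k(\mathtt c)^{-1}\mathtt A\,\rho^k(\mathtt c)\,\mathtt a\,\rho^k(\mathtt c)\) with no cancellation, the reduced form is \(\rho^k(\mathtt c)=Q_k^{-1}\mathtt c\,Q_k\). Here \(|Q_k|=3^k-1\), and \(Q_k\) begins with \(\mathtt a\) and ends with \(\mathtt{ac}\). Hence \(R_{m,m-1}\) is the cyclic reduction of \(\mathtt B\,Q^{-1}\mathtt C\,Q\,\mathtt b\,Q^{-1}\mathtt{cc}\,Q\,\mathtt C\) with \(Q=Q_{m-1}\). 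Only the final pair \(\mathtt c\mathtt C\) cancels, so \(|R_{m,m-1}|_{\mathrm{cyc}}=4\cdot3^{m-1}\). The lemma, however, asserts \(\ell_{\mathrm W}(R_{m,m-1})=5\cdot3^{m-2}+3\).

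For \(m=3\), the raw cyclic lengths of \(R_{3,0},R_{3,1},R_{3,2}\) are \(50,40,36\), against the minimal values \(34,26,18\). Independently of the lemma, the partial conjugation \(\mathtt b\mapsto Q_2^{-1}\mathtt b\,Q_2\) already shortens \(R_{3,2}\) from \(36\) to \(20\). Consequences for your plan:
\begin{itemize}
\item Step~1 will produce numbers that are not the stated formulas.
\item By Whitehead's theorem, some type-II cut inequality in Step~2 must fail on the Whitehead graph of \(R_{m,j}\).
\item Your heuristic for the term \(4\cdot3^{m-1-j}\) points the wrong way. The \(\mathtt b\)-letters come from \(\sigma\) applied to the inner iterate \(\rho^{m-1-j}(\mathtt c)\), and the outer \(\rho^j\) fixes \(\mathtt b\).
\end{itemize}

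\textbf{Missing idea: descend first, then control the descended word.}
You must first run Whitehead descent to reach a different representative \(W_{m,j}\) in the automorphic orbit, and then control that representative uniformly in \(m\). The paper does this as follows.
\begin{itemize}
\item For \(m=3\), it uses at most seven type-II moves followed by inverting the basis, obtaining explicit words of lengths \(34,26,18\).
\item For the induction, it expands one further copy of \(\rho(\mathtt c)\) and applies the type-II substitutions before free reduction. This yields closed forms for the turn multiplicities \(n_{xy}(W_{m,j})\): essentially tripling from \(m\) to \(m+1\), with \(n_{\mathtt{aA}}\mapsto3n_{\mathtt{aA}}+2\) and explicit boundary constants.
\item Minimality is then certified on these multiplicities via \(\Delta_x(A)=|\partial A|-\deg(x)\) over all \(96\) type-II automorphisms.
\end{itemize}

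Your Step~2 machinery is the right tool once it is applied to \(W_{m,j}\) rather than \(R_{m,j}\). Note that ``no cut vertex'' is not equivalent to minimality; only the capacity inequality you state afterwards is. Step~3 is correct as written.
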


\begin{proof}
A \emph{cyclic word} \(z\) is a cyclically reduced word read up to cyclic
permutation.  Write \(z=z_1\cdots z_\ell\), with subscripts taken modulo
\(\ell\).  Its \emph{Whitehead graph} is the unoriented multigraph with vertex
set \(\{\mathtt a,\mathtt A,\mathtt b,\mathtt B,\mathtt c,\mathtt C\}\) in
which each adjacent pair \(z_i z_{i+1}\), called a \emph{cyclic turn}
(including \(z_\ell z_1\)), contributes an edge joining \(z_i^{-1}\) to
\(z_{i+1}\).  Let \(n_{xy}(z)\) be the multiplicity of the edge joining
\(x\) and \(y\).  For a vertex set \(A\), let \(|\partial A|_z\) be the total
multiplicity of the edges crossing from \(A\) to its complement, and let
\(\deg_z(x)\) be the weighted degree of \(x\).  A \emph{type-II Whitehead
automorphism} has a multiplier \(x\) and a defining set \(A\), where
\(x\in A\) and \(x^{-1}\notin A\).  It fixes \(x\) and sends each oriented
letter \(y\notin\{x,x^{-1}\}\) to \(y\), \(yx\), \(x^{-1}y\), or \(x^{-1}yx\)
according as neither, only \(y\), only \(y^{-1}\), or both of
\(y,y^{-1}\) belong to \(A\).  Its change in cyclic length is

\[
\Delta_x(A)=|\partial A|_z-\deg_z(x).
\]
This is Whitehead's length-change formula \cite{MKS1976}.

Let \(\iota\) invert each basis element.  For the initial value \(m=3\),
Whitehead descent followed by \(\iota\) gives the following cyclic
representatives (capitals still denote inverses):

\[
\begin{aligned}
W_{3,0}&=\mathtt{cabCBccACCbcBaaCCbcBAAbCBccaCCbcBA},\\
W_{3,1}&=\mathtt{caccAbCBCabcBaCCAAcbcBaCCA},\\
W_{3,2}&=\mathtt{bcBcacACaaCCAAcaCA}.
\end{aligned}
\]

Each is obtained from the corresponding \(R_{3,j}\) by at most seven
type-II transformations, so the base case is checked directly from the
definition above.

For the induction, expand one additional copy of
\(\rho(\mathtt c)=\mathtt{CAcac}\), apply the type-II formulas before free
reduction, and retain the states \(j=0\), \(1\leq j\leq m-2\), and
\(j=m-1\).  From \(m\) to \(m+1\), an internal state triples every
multiplicity except that
\(n_{\mathtt{aA}}\mapsto3n_{\mathtt{aA}}+2\).  The old terminal state gives
both the new internal state with \(r=1\) and the new terminal state; the two
cyclic boundary reductions give the constant terms below.  This constructs
\(W_{m,j}\) in the automorphic orbit of \(R_{m,j}\) and proves by induction
on \(m\) the formulas for \(n_{xy}(W_{m,j})\) displayed below.

Put \(p=3^{m-2}\), and write \(n_{xy}=n_{xy}(W_{m,j})\).  For \(j=0\), the
nonzero multiplicities are

\[
\begin{array}{@{}l@{\qquad}l@{}}
n_{\mathtt{aA}}=n_{\mathtt{ab}}=p-1,
&n_{\mathtt{ac}}=1,\\[2pt]
n_{\mathtt{aC}}=n_{\mathtt{Ab}}=n_{\mathtt{AC}}=p,
&n_{\mathtt{bc}}=n_{\mathtt{Bc}}=n_{\mathtt{BC}}
=n_{\mathtt{cC}}=2p-1.
\end{array}
\]

For the internal cases \(1\leq j\leq m-2\), put \(r=3^{m-2-j}\).  Then

\[
\begin{array}{@{}l@{\qquad}l@{\qquad}l@{}}
n_{\mathtt{aA}}=p-r-1,
&n_{\mathtt{ab}}=3r,
&n_{\mathtt{ac}}=n_{\mathtt{Ac}}=n_{\mathtt{AC}}=p,\\[2pt]
n_{\mathtt{aC}}=p-2r,
&n_{\mathtt{Ab}}=r,
&n_{\mathtt{bC}}=2r,\\[2pt]
n_{\mathtt{Bc}}=n_{\mathtt{BC}}=n_{\mathtt{cC}}=3r.
\end{array}
\]

For the terminal case \(j=m-1\),

\[
\begin{array}{@{}l@{\qquad}l@{}}
n_{\mathtt{aA}}=n_{\mathtt{ac}}=p-1,
&n_{\mathtt{aC}}=n_{\mathtt{Ac}}=n_{\mathtt{AC}}=p,\\[2pt]
\multicolumn{2}{@{}l@{}}{
n_{\mathtt{ab}}=n_{\mathtt{bc}}=n_{\mathtt{Bc}}
=n_{\mathtt{BC}}=n_{\mathtt{cC}}=1.}
\end{array}
\]

All omitted multiplicities are zero.

\emph{Type-I Whitehead automorphisms} are signed permutations of the basis and
preserve cyclic length.  Hence it suffices to test type-II automorphisms.
For a fixed multiplier, membership of the four letters other than
\(x^{\pm1}\) determines \(A\), so there are at most \(6\cdot2^4=96\) cases.
Substitution of the displayed multiplicities gives the following uniform
bounds.  In the cases \(j=0\) and \(j=m-1\), every nonzero \(\Delta_x(A)\) is
at least \(\min\{2,p-1\}\).  For \(1\leq j\leq m-2\), every nonzero
\(\Delta_x(A)\) is at least

\[
\min\{2r,p-1,3p-6r-1\}.
\]

Here \(p\geq3\), and for \(1\leq j\leq m-2\) one has
\(p\geq3r\) and \(r\geq1\).
Hence every \(\Delta_x(A)\) is nonnegative.  Whitehead's theorem shows that
each \(W_{m,j}\) is minimal.

Summing the turn multiplicities gives

\[
|W_{m,0}|=13p-5,
\qquad
|W_{m,j}|=5p+4\cdot3^{m-1-j}-1,\qquad 1\leq j\leq m-2,
\qquad
|W_{m,m-1}|=5p+3.
\]

These are the asserted formulas.  The lengths with \(j\geq1\) decrease
strictly with \(j\), and the difference between the first two lengths is
\(4(p-1)>0\).  They are therefore pairwise distinct.
\end{proof}

\begin{lemma}
\label{lem:tsystem-primitivity-rank}
For every \(m\geq3\), one has \(\pi(R_{m,m-1})=3\).
\end{lemma}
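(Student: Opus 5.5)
We must show that \(\pi(R_{m,m-1})=3\).  The upper bound is automatic.  By the proof of Proposition~\ref{prop:cyclic-orbit-criterion}, \(R_{m,m-1}\) has zero exponent vector and is nontrivial, so it is imprimitive in \(F_3\); hence \(\pi(R_{m,m-1})\leq3\).  It therefore suffices to show \(\pi(R_{m,m-1})\geq3\).  By Theorem~\ref{thm:negative-immersions-primitivity-rank}, this is equivalent to showing that the natural complex of \(R_{m,m-1}\), or of any automorphic image of it, has negative immersions.

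\textbf{Choice of representative.}  We work with the Whitehead-minimal representative \(W=W_{m,m-1}\) from Lemma~\ref{lem:tsystem-whitehead-calculation}.  Primitivity rank is invariant under \(\operatorname{Aut}(F_3)\), and \(W\) has an explicit, very sparse Whitehead graph:
\begin{itemize}
\item every edge involving \(\mathtt b\) or \(\mathtt B\) has multiplicity \(1\), as does \(n_{\mathtt{cC}}\);
\item the edges among the \(\mathtt a^{\pm1}\) and \(\mathtt c^{\pm1}\) have multiplicities \(p-1\) or \(p\).
\end{itemize}

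\textbf{Ruling out rank one.}  We have \(\pi(w)=1\) only when \(w\) is a proper power.  \(W\) is not a proper power because the letter \(\mathtt b\) occurs with total multiplicity bounded independently of \(p\): from the displayed multiplicities it occurs exactly once as \(\mathtt b\) and once as \(\mathtt B\).

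\textbf{Ruling out rank two.}  We must exclude a rank-two subgroup \(H\) containing \(W\) in which \(W\) is imprimitive.  We would use the standard criterion of Puder (equivalently Louder--Wilton): \(\pi(w)\leq2\) iff \(w\) lies in some rank-two subgroup \(H\) in which it is imprimitive, and such an \(H\) can be taken to be algebraic and to correspond to a quotient of the core graph of \(\langle w\rangle\) with Euler characteristic \(-1\).

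The single occurrence of \(\mathtt b\) is the lever.  In any such Stallings graph \(\Gamma\) through which \(W\) passes, the \(\mathtt b\)-edges are traversed exactly once by the closed path \(W\).  Collapsing the \(\mathtt b\)-edge, \(W\) decomposes as
\[
W=\mathtt b\,U\,\mathtt B\,V,
\]
with \(U,V\in F(\mathtt a,\mathtt c)\).  An edge traversed once by \(W\) in \(\Gamma\) forces \(W\) to be primitive in \(\pi_1(\Gamma)\), unless that edge is separating or lies in a loop also used by \(U\) or \(V\).  This reduces the question to rank-two subgroups built from the words \(U,V\) in the free group \(F(\mathtt a,\mathtt c)\).

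In effect, \(W\) is a double of two words in \(F_2\).  We would invoke the known rule for such amalgamated forms: the rank is at least \(3\) provided \(U\) and \(V\) are not simultaneously conjugate into a common cyclic subgroup and neither is primitive or a proper power in \(F(\mathtt a,\mathtt c)\).

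\textbf{Main obstacle.}  The hardest step is verifying these conditions on \(U\) and \(V\), which are long words whose length grows like \(3^{m}\).  We would try first to read \(U\) and \(V\) off the recursive construction by substituting \(\rho\) iteratively.  One should then show, by induction on \(m\) using the Whitehead graph of \(U\) in \(F_2\), that \(U\) is Whitehead-minimal with a cut-vertex-free graph, hence neither primitive nor a proper power, and similarly for \(V\).

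If this fails, the fallback is to check negative immersions via Wise's \(w\)-cycles criterion: exhibit that the Whitehead graph of \(W\) admits no admissible reducible immersion of Euler characteristic \(0\), by an induction on \(m\) that mirrors the multiplicity recursion.
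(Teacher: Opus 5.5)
Your upper bound and your rank-one argument are fine, and the single occurrence of \(\mathtt b\) and \(\mathtt B\) is the right lever. The rank-two step, however, has a genuine gap.

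The ``known rule'' you plan to invoke requires \(U\) and \(V\) both to be non-primitive in \(F(\mathtt a,\mathtt c)\), and that hypothesis is false here. Already \(W_{3,2}=\mathtt{bcBcacACaaCCAAcaCA}\) has \(U=\mathtt c\). In the unreduced representative \(r_k=\mathtt B\,x_k^{-1}\mathtt b\,x_k^2\mathtt C\) of \(R_{m,m-1}\), with \(k=m-1\) and \(x_k=\rho^k(\mathtt c)\), the piece \(x_k^{-1}\) is conjugate to \(\mathtt C\), because \(\rho(\mathtt c)=(\mathtt{ac})^{-1}\mathtt c(\mathtt{ac})\). So the induction you flag as the main obstacle cannot succeed. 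In any case, a cut-vertex-free Whitehead graph does not exclude proper powers.

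The reduction before that step is also misstated. The letters \(\mathtt b\) and \(\mathtt B\) may cross the \emph{same} edge of \(\Gamma\) in opposite directions, so the \(\mathtt b\)-edges need not be traversed once. Conversely, an edge traversed exactly once by a closed path can never separate, so your exception clause is vacuous.

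What is needed is the correct case split, which asks much less of the two pieces:
\begin{itemize}
\item If \(\mathtt b\) and \(\mathtt B\) cross distinct edges, each is crossed once, and \(W\) is primitive in \(\pi_1(\Gamma)\).
\item If they cross one edge \(e\) and \(e\) separates, both sides of \(\Gamma\setminus e\) are cyclic. It then suffices that \emph{one} piece be root-free: its loop generates its side, so \(W\) contains a basis element exactly once.
\item If \(e\) does not separate, \(\pi_1(\Gamma\setminus e)\) is cyclic. It then suffices that the two pieces are not conjugate into a common cyclic subgroup.
\end{itemize}
With the explicit representative \(r_k\), rather than \(W_{m,m-1}\), both checks are short. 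First, \(x_k\) is conjugate to \(\mathtt c\), hence root-free. Second, \(x_k^2\mathtt C\) is not conjugate to a power of \(x_k\): abelianization forces the exponent to be \(1\), while \(|x_k^2\mathtt C|_{\mathrm{cyc}}=2\cdot3^k-1\neq1=|x_k|_{\mathrm{cyc}}\) for \(k\geq2\). This is exactly the paper's argument. Your fallback via Wise's \(w\)-cycles is only a suggestion and does not supply a proof.
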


\begin{proof}
Put \(k=m-1\) and \(x_k=\rho^k(\mathtt c)\).  Since

\[
\rho(\mathtt c)=(\mathtt{ac})^{-1}\mathtt c(\mathtt{ac}),
\]
induction shows that every \(x_k\) is conjugate to \(\mathtt c\).  Thus it is
\emph{root-free}, meaning that it is not a proper power.  Let \(r_k\) be the
word obtained by freely reducing

\[
\mathtt B\,x_k^{-1}\mathtt b\,x_k^2\mathtt C.
\]

It is cyclically reduced and represents \(R_{m,m-1}\).
Direct free reduction gives

\[
|x_k|_{\mathrm{cyc}}=1,
\qquad
|x_k^2\mathtt C|_{\mathrm{cyc}}=2\cdot3^k-1;
\]
the second formula follows by induction, since the cyclic length equals
\(1\) for \(k=0\) and the next substitution triples it and adds \(2\).
Since \(k=m-1\geq2\), the two cyclic lengths are different.  Thus
\(x_k^2\mathtt C\) is not conjugate to a power of \(x_k\), since
abelianization would force the exponent of that power to be \(1\).

The cyclic word \(r_k\) contains \(\mathtt B\) exactly once, whereas every
letter count in a proper cyclic power is divisible by its exponent.  Thus
\(r_k\) is not a proper power and \(\pi(r_k)>1\).  Suppose that it is
imprimitive in a rank-two subgroup.  Let \(\Gamma\) be the \emph{folded core
graph} of that subgroup, namely the finite folded labeled based graph whose
based loops represent the elements of that subgroup.  The word \(r_k\) labels
a based loop in
\(\Gamma\), and \(\operatorname{rk}\pi_1(\Gamma)=2\).  The two complementary subpaths have
labels \(x_k^{-1}\) and \(x_k^2\mathtt C\), so neither contains
\(\mathtt b^{\pm1}\).  If the two occurrences of \(\mathtt b^{\pm1}\)
cross distinct unoriented edges, each such edge is crossed once.  An edge
crossed exactly once by a closed path cannot separate \(\Gamma\).  Choosing a
maximal tree that omits one of these edges makes a basis letter occur once in
\(r_k\), so \(r_k\) is primitive in \(\pi_1(\Gamma)\), a contradiction.
Hence both occurrences cross one edge \(e\) in opposite directions.

If \(e\) separates, the two components of \(\Gamma\setminus e\) have positive
rank because the two complementary subpaths are nontrivial.  Their ranks sum
to two, so both components are cyclic.  Root-freeness makes the
\(x_k^{-1}\)-loop a generator of its component.  Relative to generators of
the two cyclic components, \(r_k\) contains one generator exactly once and is
therefore primitive in \(\pi_1(\Gamma)\), again a contradiction.

If \(e\) does not separate, \(\Gamma\setminus e\) is connected of rank one,
so its fundamental group is cyclic; changing the basepoint only conjugates
this cyclic subgroup.  Since \(x_k^{-1}\) is root-free, this forces
\(x_k^2\mathtt C\) to be conjugate to a power of \(x_k\), contrary
to the preceding paragraph.  Thus \(\pi(r_k)>2\).  The nontrivial word
\(r_k\) has zero exponent vector and is therefore imprimitive in \(F_3\).
Hence \(\pi(R_{m,m-1})=\pi(r_k)=3\).
\end{proof}

\begin{proof}[Proof of Answer~\ref{ans:question-2-8-14}]
Both \(\rho(\mathtt c)\) and \(\sigma(\mathtt c)\) have exponent vector
\((0,0,1)\).  For every \(m\geq3\),
Lemmas~\ref{lem:tsystem-whitehead-calculation} and
\ref{lem:tsystem-primitivity-rank} verify all hypotheses of
Proposition~\ref{prop:cyclic-orbit-criterion}.  Hence the common quotient has
at least \(m\) distinct \(T\)-systems of minimal generating triples, and every
associated natural presentation complex has uniform negative immersions.
Given \(N\), take \(m\geq\max\{3,N\}\).
\end{proof}

\begin{aipractices}

We supplied the model (ChatGPT 5.6 Sol) with the preprint \cite{Semidetnov2026} and asked it to
identify other open problems to which the same construction might apply.
Exact computer searches produced small candidate one-relator presentations, after which
further prompts requested a parametrized family and an invariant distinguishing
its \(T\)-systems.  The resulting proof uses cyclic compositions of
endomorphisms to obtain isomorphic presentations and Whitehead-minimal length
to distinguish the \(T\)-systems.  An initial family required an unresolved
primitivity argument and was replaced by the power-endomorphism family above,
to which the existing \(2\)-freeness criterion applies.  In this procedure,
exact computation was used to formulate statements, but not as a substitute
for proof.
\end{aipractices}

\section{Word-defined group laws on free nilpotent groups}
\label{sec:neshchadim}

For a group \(G\), let

\[
\gamma_1(G)=G,
\qquad
\gamma_{k+1}(G)=[\gamma_k(G),G]
\]
be its \emph{lower central series}.  Let \(F_n\) be the free group of rank
\(n\).  For a word \(w=w(a,b)\in F_2\), the \emph{word-defined binary
operation} on a group \(G\) associated with \(w\) is
\(x*_w y=w(x,y)\).

\begin{Question}[{\cite[Problem~18, p.~400]{BardakovEtAl2015}}]
Let \(w=w(a,b)\in F_2\) be reduced.  Suppose that \(x*_w y=w(x,y)\)
defines a group operation on \(F_n/\gamma_k(F_n)\) for all \(n\geq2\) and
\(k\geq1\).  Must \(w=ab\) or \(w=ba\)?
\end{Question}

\begin{Answer}
\label{ans:neshchadim}
Yes.  The only such words are \(ab\) and \(ba\).
\end{Answer}

\subsection{Solution}

We use Hanna Neumann's theorem in the form stated for free groups in
\cite[p.~142]{HulanickiSwierczkowski1962}.

\begin{theorem}[{Neumann \cite[Theorem~2.1]{Neumann1961}}]
Let \(G\) be a nonabelian free group, let \(F(x,y)\) be the free group with
basis \(\{x,y\}\), and let \(f=f(x,y)\in G*F(x,y)\).  For \(u,v\in G\),
let \(f(u,v)\) denote evaluation fixing the coefficient subgroup \(G\) and
sending \(x\) to \(u\) and \(y\) to \(v\).  If
\[
u\circ v:=f(u,v)
\]
defines an associative binary operation on \(G\), then, as a function
\(G\times G\to G\), it is one of
\[
(u,v)\mapsto z,\qquad
(u,v)\mapsto u,\qquad
(u,v)\mapsto v,\qquad
(u,v)\mapsto uzv,\qquad
(u,v)\mapsto vzu
\]
for some constant \(z\in G\).
\end{theorem}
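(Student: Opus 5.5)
The statement is Neumann's theorem, quoted from \cite{Neumann1961}; the plan below is how I would reprove it if a self-contained argument were wanted. The strategy is to turn associativity, which holds on values in \(G\), into an identity in a free product, and then analyse that identity combinatorially.

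\emph{Step 1: upgrade associativity to a formal identity.} Associativity says that
\[
W:=f\bigl(f(x,y),t\bigr)\,f\bigl(x,f(y,t)\bigr)^{-1}\in G*F(x,y,t)
\]
dies under every \(G\)-homomorphism \(G*F(x,y,t)\to G\) that fixes \(G\). I would invoke the classical fact that a nonabelian free group \(G\) discriminates \(G*F(x,y,t)\), and in particular \(G*F\) is \(G\)-residually \(G\). This can be proved directly by Baumslag's big-powers argument: substitute \(x\mapsto g_1^{N_1}\), \(y\mapsto g_2^{N_2}\), \(t\mapsto g_3^{N_3}\) for suitable non-commuting \(g_i\) and large \(N_i\). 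It follows that \(W=1\) in \(G*F(x,y,t)\), so the associativity of \(f\) is now a formal identity.

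\emph{Step 2: constrain the exponent sums.} Map \(G*F(x,y,t)\to\mathbb Z^3\) by killing \(G\), and suppose \(f\mapsto\alpha x+\beta y\). The identity \(W=1\) then gives
\[
\alpha^2x+\alpha\beta y+\beta t=\alpha x+\alpha\beta y+\beta^2 t,
\]
so \(\alpha,\beta\in\{0,1\}\). This splits the proof into four cases by the pair \((\alpha,\beta)\).

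\emph{Step 3: normal-form analysis in each case.} Write \(f\) in reduced free-product normal form, alternating between nontrivial elements of \(G\) and nontrivial reduced words in \(x,y\). Then compare the normal forms of \(f(f(x,y),t)\) and \(f(x,f(y,t))\).
\begin{itemize}
\item The key observation concerns substitution of \(f(x,y)\) for \(x\). Cancellation between adjacent copies of \(f\) can only happen at the ends of the copies. So if \(f\) has \(k\) syllables involving \(x\) and the substituted copies do not collapse, the syllable count grows multiplicatively.
\item On the left, \(x\) occurs inside copies of \(f(x,y)\), and the occurrences of \(t\) are unchanged. On the right the roles are swapped. Matching the number of \(x\)-occurrences and \(t\)-occurrences on both sides forces \(f\) to contain at most one occurrence of \(x^{\pm1}\) and at most one of \(y^{\pm1}\), after cancellation is accounted for.
\item By Step~2, these occurrences have exponent \(+1\) when present.
\end{itemize}
Thus \(f\) is one of the following, with \(z_i\in G\):
\[
z,\qquad z_1xz_2,\qquad z_1yz_2,\qquad z_1xz_2yz_3,\qquad z_1yz_2xz_3.
\]
Substituting each form into \(W=1\) pins down the constants. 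For \(f=z_1xz_2\), associativity gives \(z_1^2xz_2=z_1xz_2^2\) in the free product, hence \(z_1=z_2=1\), so \(f(u,v)=u\). For \(f=z_1xz_2yz_3\), comparing normal forms gives \(z_1=z_3=1\), leaving \(uzv\) with \(z=z_2\). The remaining forms are handled symmetrically and give \(z\), \(v\), and \(vzu\).

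\emph{Main obstacle.} I expect the hardest step to be the counting argument in Step~3. The difficulty is controlling cancellation at the junctions between substituted copies of \(f\), especially when \(f\) begins or ends with \(x\)- or \(y\)-syllables that could partially cancel against \(G\)-letters. I would handle this by first conjugating, that is, cyclically reducing in a suitable sense, and by tracking only the number of syllables lying in the factor \(F\). That count is robust: cancellation across a \(G\)-letter cannot occur because \(G\) is a free factor.
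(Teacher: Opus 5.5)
\textbf{Overall.} The paper does not prove this statement. It quotes Neumann's theorem and uses it as a black box, so your outline has to stand on its own. Steps~1 and~2 are sound: $G*F(x,y,t)$ is $G$-discriminated by $G$ (Baumslag's big-powers argument), and killing $G$ and abelianizing forces $\alpha,\beta\in\{0,1\}$. The final bookkeeping is also fine once $f$ has at most one $x^{\pm1}$ and one $y^{\pm1}$. One small slip: for $f=z_1xz_2$ the identity is $z_1^2xz_2^2=z_1xz_2$, not $z_1^2xz_2=z_1xz_2^2$, though the conclusion $z_1=z_2=1$ is unchanged.

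\textbf{The gap is Step~3, which carries the entire content of the theorem.} The counts $n_x^2$ versus $n_x$ (and $n_y$ versus $n_y^2$) describe the words \emph{before} free reduction. You assert that matching them ``after cancellation is accounted for'' forces $n_x,n_y\le 1$. However, you give no lower bound on what survives reduction in $f(f(x,y),t)$, or respectively in $f(x,f(y,t))$.

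\textbf{Your proposed control of cancellation is false.} When copies of $X=f(x,y)$ are substituted, three $G$-syllables multiply: the last one of a copy, one from the outer word, and the first one of the next copy. If that product is $1$, the neighbouring $F$-syllables meet and cancel. So cancellation does propagate across a $G$-letter.

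Concretely, let $b,b'$ be distinct basis elements of $G$ and $f=x\,b^{-1}x\,b'\,x^{-1}b$. This word is cyclically reduced in every sense. Then $f(f(x,y),t)=X\,b^{-1}X\,b'X^{-1}b$. At the junction $\cdots x^{-1}b\cdot b^{-1}\cdot x\cdots$, first $b\,b^{-1}$ and then $x^{-1}x$ cancel, so the reduced word has $7$ letters $x^{\pm1}$, not $9$. Worse, a single junction can absorb all the growth. For $X=x\,b'\,x^{-1}b$ one has $X\,b^{-1}X=x\,b'^{2}x^{-1}b$, which has no more $x$-letters than $X$ itself.

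Hence neither cyclic reduction nor counting $F$-syllables makes the count multiplicative. No bound that treats a segment of the outer word and the substituted word independently can work either.

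\textbf{What is missing.} You need a genuine combinatorial lemma: whenever $n_x\ge 2$ and $\alpha\in\{0,1\}$, the reduced form of $f(f(x,y),t)$ keeps more than $n_x$ letters $x^{\pm1}$ despite such collapses. The other side is easy, since $f(x,f(y,t))$ has at most $n_x$ of them. You also need the symmetric statement for $t$. Alternatively, you could take a different route, such as Neumann's own argument. Without one of these, the reduction to the five normal forms is unproved.
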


\begin{proof}[Proof of Answer~\ref{ans:neshchadim}]
Let \(\alpha\) and \(\beta\) be the exponent sums of \(a\) and \(b\)
in \(w\).  On
\(F_2/\gamma_2(F_2)\cong\mathbb Z^2\), we have
\[
u*_w v=\alpha u+\beta v.
\]
Since this is a group law, its identity axioms force
\(\alpha=\beta=1\).

Let \(a,b,c\) freely generate \(F_3\), and put
\[
q=w(w(a,b),c)\,w(a,w(b,c))^{-1}.
\]
Associativity on \(F_3/\gamma_k(F_3)\) gives
\(q\in\gamma_k(F_3)\) for every \(k\).  Free groups are residually nilpotent
\cite{MKS1976}, so
\[
\bigcap_{k\geq1}\gamma_k(F_3)=1
\]
and hence \(q=1\).  Thus \(w\) defines an associative operation on the
nonabelian free group \(F_2\).

Neumann's theorem says that this operation on \(F_2\), as a function, is one
of the five displayed types.  The exponent sums \(\alpha=\beta=1\) exclude
the constant function and the two projections.  In each remaining type,
\(w(1,1)=1\) forces the coefficient to be trivial.  Hence the function is
\((u,v)\mapsto uv\) or \((u,v)\mapsto vu\).  Evaluating at the free basis
\((a,b)\) gives \(w=ab\) or \(w=ba\).  Conversely, these words give the usual
and opposite group laws, respectively.
\end{proof}

\begin{aipractices}

We copied the formulation of the problem and asked the model (Claude Opus 5) to solve it.  The
model produced a long self-contained argument.  We then requested a literature
search, which led to a much shorter proof using a theorem of Hanna
Neumann.  The useful lesson is to follow a self-contained solution with a
targeted search for classical results: the right theorem may collapse a long
proof.
\end{aipractices}

\section{Stable representations of nilpotent groups}
\label{sec:plotkin-stable-representations}

Let \(K\) be a commutative ring and \(L\) a group.  The \emph{augmentation
map} \(\varepsilon\colon KL\to K\) sends every element of \(L\) to \(1\),
and its kernel \(\Delta_L\) is the \emph{augmentation ideal}.  A representation
\((M,L)\), where \(M\) is a right \(KL\)-module, is
\emph{\(r\)-step stable} if

\[
M\Delta_L^r=0.
\]

The class of such representations is denoted by \(\mathfrak S^r\).  Plotkin
posed the following problem in 1977.  It was repeated as
Problem~12.3.8 in the monograph of Plotkin and Vovsi
\cite[Problem~12.3.8, p.~147]{PlotkinVovsi1983}.

\begin{Question}[{\cite[Problem~4.3.5, p.~33]{Plotkin1977}}]
Let \(n\geq2\), let \(G\) be a torsion-free nilpotent group of class
\(n-1\), and let \(H\trianglelefteq G\) be abelian.  Does \(G\) admit a
faithful representation in \(\mathfrak S^n\) whose restriction to \(H\) is
\(2\)-step stable?
\end{Question}

\begin{Answer}
\label{ans:plotkin-stable-representations}
Yes.  Such a representation exists over \(\mathbb Q\).
\end{Answer}

\subsection{Solution}

We use the Mal'cev correspondence \cite{Malcev1949}.

\begin{theorem}
\label{thm:plotkin-malcev}
Every torsion-free nilpotent group \(G\) embeds in \(\exp(\mathfrak g)\), where
\(\mathfrak g\) is a nilpotent Lie algebra over \(\mathbb Q\) of the same
nilpotency class as \(G\),
\(\log G\) spans \(\mathfrak g\), and multiplication is given by the
Baker--Campbell--Hausdorff formula.
\end{theorem}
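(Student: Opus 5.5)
The plan is to build \(\mathfrak g\) inside a completed group algebra, with no finite-generation hypothesis, and then use the BCH--Mal'cev correspondence of lower central series quoted in Lemma~\ref{lem:camina-bch-group} to control the class. Let \(c\) be the class of \(G\). Let \(\widehat{\mathbb Q G}=\varprojlim_k \mathbb QG/\Delta_G^k\) be the \(\Delta_G\)-adic completion of the rational group algebra. For \(g\in G\), the element \(g-1\) lies in \(\Delta_G\), so the series
\[
\log g=\sum_{m\geq1}\frac{(-1)^{m+1}}{m}(g-1)^m
\]
converges in \(\widehat{\mathbb QG}\). The map \(g\mapsto g\otimes g\) shows that \(\log g\) is primitive for the completed coproduct.

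The first step is injectivity of \(G\to\widehat{\mathbb QG}\). This is Jennings' theorem on rational dimension subgroups: \(G\cap(1+\Delta_G^k)\) is the isolator of \(\gamma_k(G)\). For torsion-free \(G\), the isolator of the trivial subgroup is trivial, so \(\bigcap_k G\cap(1+\Delta_G^k)=1\). I would first prove this for finitely generated \(G\), where it follows from Hall's embedding of \(G\) in some \(\mathrm{UT}_N(\mathbb Z)\). The general case follows because any nontrivial \(g\in G\) already lies in a finitely generated subgroup.

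The second step is to let \(\mathfrak g\) be the \(\mathbb Q\)-span of \(\log G\) and show that it is a Lie subalgebra, nilpotent of class at most \(c\). I would use the completed Baker--Campbell--Hausdorff identity \(\log(gh)=\operatorname{BCH}(\log g,\log h)\) together with the rational inversion formulas of Lazard--Mal'cev type. These express \(\log g+\log h\) and \([\log g,\log h]\) as convergent \(\mathbb Q\)-combinations of logarithms of words in \(g,h\), with coefficients in \(\mathbb Q\). Such expansions terminate once one knows that iterated brackets of length \(>c\) of elements of \(\log G\) vanish. That vanishing is the genuine difficulty, and I expect it to be the main obstacle. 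To prove it, I would show that a bracket of length \(r\) of logarithms lies in the closed span of \(\log\bigl(\sqrt{\gamma_r(G)}\bigr)\), by induction on \(r\) via the group commutator formula \(\log[g,h]=[\log g,\log h]+(\text{higher brackets})\). For \(r>c\), the Jennings identification gives \(\sqrt{\gamma_r(G)}=1\), so these brackets vanish. If the infinite-dimensional bookkeeping becomes awkward, I would instead do everything for finitely generated subgroups inside strictly upper triangular rational matrices, where everything is finite, and then pass to the directed union. The identification of \(\widehat{\mathbb Q H}\) for \(H\leq G\) makes these pieces compatible.

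Finally, the map \(g\mapsto\exp(\log g)\) embeds \(G\) in \(\exp(\mathfrak g)\) and is multiplicative by the BCH identity. By construction \(\log G\) spans \(\mathfrak g\). The class of \(\mathfrak g\) is at most \(c\) by the second step. It is at least \(c\) because \(\gamma_c(\exp\mathfrak g)=\exp(\gamma_c\mathfrak g)\), by the correspondence cited in Lemma~\ref{lem:camina-bch-group}, contains the nontrivial subgroup \(\gamma_c(G)\).
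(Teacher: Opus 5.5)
The paper gives no proof of this statement. It simply quotes Mal'cev's theorem \cite{Malcev1949}, where \(\mathfrak g\) comes from the rational Mal'cev completion; every element of that completion has a power in \(G\), so the spanning property is automatic.

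Your route is genuinely different. You build \(\mathfrak g=\operatorname{span}_{\mathbb Q}\log G\) intrinsically inside \(\widehat{\mathbb QG}\), in the spirit of Jennings and Quillen. This involves no choices and no finite generation, and the spanning property holds by definition.

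Your second step works once the induction is organised correctly. Fix \(N\). Show by \emph{downward} induction on \(r\), starting at \(r\geq N\), that every bracket of length \(r\) of elements of \(\log G\) lies in \(\operatorname{span}_{\mathbb Q}\log\gamma_r(G)\) plus the closure of \(\Delta_G^N\). The inductive step uses \(\log[g_1,\dots,g_r]=[\log g_1,\dots,\log g_r]+(\text{longer brackets})\). Then let \(N\to\infty\). For \(r>c\) this needs only \(\gamma_r(G)=1\), not Jennings. The same expansion, now finite, puts \([\log g,\log h]\) in \(\operatorname{span}_{\mathbb Q}\log G\).

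The step that fails as written is injectivity of \(G\to\widehat{\mathbb QG}\) for infinitely generated \(G\).
Hall's embedding \(H\hookrightarrow\mathrm{UT}_N(\mathbb Z)\) yields \(H\cap(1+\Delta_H^N)=1\), but with \(N=N(H)\) depending on \(H\), not just on the class. A relation \(g-1\in\Delta_G^k\) is witnessed in \(\Delta_{H_k}^k\) for some finitely generated \(H_k\ni g\) that depends on \(k\). Moreover, \(\Delta_G^k\cap\mathbb QH\) can be strictly larger than \(\Delta_H^k\). For example, in the Heisenberg group \(G=\langle x,y\rangle\), the central element \(z=[x,y]\) satisfies \(z-1\in\Delta_G^2\) but \(z-1\notin\Delta_{\langle z\rangle}^2\). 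So you cannot arrange \(k\geq N(H_k)\), and knowing that \(g\) lies in a finitely generated subgroup transfers nothing.

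The reduction needs the uniform bound
\[
H\cap\bigl(1+\Delta_{\mathbb QH}^{\,c+1}\bigr)=1
\qquad\text{for every finitely generated } H\leq G,
\]
with the single exponent \(c+1\). Then a witness of \(g-1\in\Delta_G^{c+1}\) lives in one such \(H\), and the argument closes. This uniform bound is the real content of the Jennings theorem you quote; Hall's embedding alone does not supply it.

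You can either cite Jennings' theorem directly, or derive the bound as follows.
\begin{itemize}
\item For finitely generated \(H\), Hall gives \(H\hookrightarrow\widehat{\mathbb QH}\).
\item Your second step makes \(\mathfrak h=\operatorname{span}_{\mathbb Q}\log H\) nilpotent of class at most \(c\).
\item The assignment \(h\mapsto\exp(\log h)\) induces an algebra map \(\mathbb QH\to U(\mathfrak h)/\omega^{c+1}\) that kills \(\Delta_H^{c+1}\), where \(\omega\) is the augmentation ideal.
\item The weighted PBW argument of Lemma~\ref{lem:plotkin-pbw-injection} gives \(\mathfrak h\cap\omega^{c+1}=0\).
\end{itemize}
Hence \(h-1\in\Delta_H^{c+1}\) forces \(\log h=0\), and so \(h=1\).
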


Apply Theorem~\ref{thm:plotkin-malcev} and set

\[
\mathfrak a=\operatorname{span}_{\mathbb Q}(\log H)\leq\mathfrak g.
\]

\begin{lemma}
The subspace \(\mathfrak a\) is an abelian ideal of \(\mathfrak g\).
\end{lemma}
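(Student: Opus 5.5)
We must show that \(\mathfrak a=\operatorname{span}_{\mathbb Q}(\log H)\) is abelian and is an ideal of \(\mathfrak g\). Throughout we use the Mal'cev correspondence of Theorem~\ref{thm:plotkin-malcev}, together with two standard BCH identities that hold in any nilpotent Lie algebra over \(\mathbb Q\):
\[
\exp(x)\exp(y)\exp(x)^{-1}=\exp(e^{\operatorname{ad}x}y),
\]
and the fact that \(\exp(x)\) and \(\exp(y)\) commute if and only if \([x,y]=0\).

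\emph{Abelian.} Take \(h_1,h_2\in H\). Since \(H\) is abelian, \(\exp(\log h_1)\) and \(\exp(\log h_2)\) commute. For the converse direction of the commuting criterion we argue as follows. Commuting gives \(\exp(e^{\operatorname{ad}x}y)=\exp(y)\), so \(e^{\operatorname{ad}x}y=y\). The operator \(\operatorname{ad}x\) is nilpotent on the finite-dimensional, \(\operatorname{ad}x\)-stable span of the iterated brackets of \(y\) with \(x\). Hence
\[
\operatorname{ad}x=\log\bigl(e^{\operatorname{ad}x}\bigr)
\]
is a polynomial in \(e^{\operatorname{ad}x}-1\) with zero constant term, and that polynomial kills \(y\). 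So \([x,y]=0\). Bilinearity of the bracket then gives \([\mathfrak a,\mathfrak a]=0\).

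\emph{Ideal.} Since \(\log G\) spans \(\mathfrak g\), it suffices to show \([\log g,\log h]\in\mathfrak a\) for every \(g\in G\) and \(h\in H\). Normality of \(H\) gives \(g^kh g^{-k}\in H\) for every integer \(k\). Put \(x=\log g\) and \(y=\log h\). By the conjugation formula, \(\log(g^khg^{-k})=e^{k\operatorname{ad}x}y\), so
\[
e^{k\operatorname{ad}x}y\in\mathfrak a\qquad\text{for all }k\in\mathbb Z.
\]

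The step I expect to be the main obstacle is extracting \(\operatorname{ad}x(y)\) itself from these vectors, since \(\mathfrak g\) may be infinite-dimensional. I handle it by working in \(V=\operatorname{span}_{\mathbb Q}\{(\operatorname{ad}x)^iy\}\). This space is finite-dimensional because \(\operatorname{ad}x\) is nilpotent, say \((\operatorname{ad}x)^c=0\). The map \(k\mapsto e^{k\operatorname{ad}x}y\) is a polynomial in \(k\) of degree less than \(c\) with coefficients in \(V\). Its linear coefficient is \(\operatorname{ad}x(y)\). Lagrange interpolation at \(k=0,\ldots,c-1\) expresses that coefficient as a \(\mathbb Q\)-linear combination of the values \(e^{k\operatorname{ad}x}y\), each of which lies in \(\mathfrak a\). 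Therefore \([x,y]\in\mathfrak a\). Equivalently, \(\operatorname{ad}x=\log(e^{\operatorname{ad}x})\) is a finite \(\mathbb Q\)-combination of powers of \(e^{\operatorname{ad}x}\), which preserve \(\mathfrak a\).
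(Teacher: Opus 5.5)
Your proof is correct and follows essentially the paper's route: from commuting you get \(e^{\operatorname{ad}x}y=y\) and hence \([x,y]=0\) (the paper inverts \((e^{\operatorname{ad}x}-1)/\operatorname{ad}x\) where you use the logarithm series). For the ideal property, the paper argues that \(e^{\operatorname{ad}x}\) preserves \(\mathfrak a\), so \(\operatorname{ad}x=\log(e^{\operatorname{ad}x})\) does too; this is exactly your closing remark and is equivalent to your Lagrange interpolation. The infinite-dimensionality you flag is not actually an obstacle, since \(\gamma_n(\mathfrak g)=0\) gives \((\operatorname{ad}x)^{n-1}=0\) on all of \(\mathfrak g\) and the polynomial argument is purely algebraic.
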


\begin{proof}
If \(x,y\in\log H\), then \(\exp x\) and \(\exp y\) commute.  Hence
\(e^{\operatorname{ad}x}y=y\).  Since \(\operatorname{ad}x\) is nilpotent,
the polynomial
\[
P(\operatorname{ad}x)
=\frac{e^{\operatorname{ad}x}-1}{\operatorname{ad}x}
=1+\frac{\operatorname{ad}x}{2!}
+\frac{(\operatorname{ad}x)^2}{3!}+\cdots
\]
is finite and invertible.  The equality
\(0=(e^{\operatorname{ad}x}-1)y
=P(\operatorname{ad}x)[x,y]\) therefore gives \([x,y]=0\).  Thus
\(\mathfrak a\) is abelian.

For \(x\in\log G\), normality of \(H\) gives

\[
e^{\operatorname{ad}x}(\log H)=\log H.
\]

Consequently \(e^{\operatorname{ad}x}\) preserves \(\mathfrak a\).  Since
\(\log(e^{\operatorname{ad}x})=\operatorname{ad}x\) is a finite polynomial
in \(e^{\operatorname{ad}x}-1\),
\(\operatorname{ad}x\) also preserves \(\mathfrak a\).  The set \(\log G\)
spans \(\mathfrak g\), so \(\mathfrak a\) is an ideal.
\end{proof}

Let \(U=U_{\mathbb Q}(\mathfrak g)\) be the \emph{universal enveloping
algebra}, the associative algebra generated by \(\mathfrak g\) subject to
\(xy-yx=[x,y]\).  Let \(\omega\) be the kernel of the augmentation
\(U\to\mathbb Q\) that vanishes on \(\mathfrak g\), and let \(J\) be the
two-sided ideal generated by all products
\(xy\), where \(x,y\in\mathfrak a\).  Put

\[
B=U/(\omega^n+J).
\]

\begin{lemma}
\label{lem:plotkin-pbw-injection}
The canonical map \(\mathfrak g\to B\) is injective.
\end{lemma}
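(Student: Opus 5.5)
The plan is to show that \(\mathfrak g\cap(\omega^n+J)=0\) inside \(U\), using a PBW basis adapted simultaneously to the lower central series of \(\mathfrak g\) and to the abelian ideal \(\mathfrak a\). Write \(\mathfrak g_i=\gamma_i(\mathfrak g)\). Since \(\mathfrak g\) has class \(n-1\), we have \(\mathfrak g_n=0\). The subspaces in the chain \(\mathfrak g_1\supseteq\cdots\supseteq\mathfrak g_n=0\), together with the single subspace \(\mathfrak a\), generate a distributive lattice. Hence there is a basis \(\{z_\lambda\}\) of \(\mathfrak g\) such that every \(\mathfrak g_i\) and every \(\mathfrak g_i\cap\mathfrak a\) is spanned by a subset of it. Concretely, for each \(i\) choose a basis of a complement of \(\mathfrak g_{i+1}\cap\mathfrak a\) in \(\mathfrak g_i\cap\mathfrak a\). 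Extend it by a basis of a complement of \(\mathfrak g_{i+1}+(\mathfrak g_i\cap\mathfrak a)\) in \(\mathfrak g_i\). This works in arbitrary dimension.

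Give \(z_\lambda\) the weight \(\operatorname{wt}(z_\lambda)=i\) if \(z_\lambda\in\mathfrak g_i\setminus\mathfrak g_{i+1}\), and the \(\mathfrak a\)-degree \(1\) or \(0\) according as \(z_\lambda\in\mathfrak a\) or not. Well-order the basis. By PBW, the ordered monomials \(z_{\lambda_1}\cdots z_{\lambda_k}\), with \(\lambda_1\le\cdots\le\lambda_k\), form a basis of \(U\). Give each monomial the total weight and total \(\mathfrak a\)-degree of its letters. Let \(U_{\ge n}\) be the span of ordered monomials of weight \(\ge n\), and let \(U^{\mathfrak a}_{\ge2}\) be the span of ordered monomials of \(\mathfrak a\)-degree \(\ge2\).

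The key step is to show that both \(U_{\ge n}\) and \(U^{\mathfrak a}_{\ge2}\) are two-sided ideals. It suffices to check that multiplying an ordered monomial on either side by a basis letter and straightening produces only ordered monomials whose weight and \(\mathfrak a\)-degree have not decreased. Straightening replaces \(z_\mu z_\lambda\) by \(z_\lambda z_\mu+[z_\mu,z_\lambda]\).

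For weight, \([\mathfrak g_i,\mathfrak g_j]\subseteq\mathfrak g_{i+j}\), so the bracket, expanded in the adapted basis, has letters of weight \(\ge i+j\). Total weight therefore does not drop.

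For \(\mathfrak a\)-degree there are three cases. A bracket of two \(\mathfrak a\)-letters vanishes, because \(\mathfrak a\) is abelian. A bracket of an \(\mathfrak a\)-letter with a non-\(\mathfrak a\)-letter lies in \(\mathfrak a\), because \(\mathfrak a\) is an ideal, so it again has \(\mathfrak a\)-degree \(1\). A bracket of two non-\(\mathfrak a\)-letters loses nothing, since those letters contributed \(0\).

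An induction on the number of inversions makes this rigorous. This bookkeeping is where I expect the main care to be needed, especially the simultaneous adaptedness of the basis, which is what makes the bracket of two \(\mathfrak a\)-letters expand purely in \(\mathfrak a\)-letters.

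Granting this, the ideal \(\omega\) is spanned by the ordered monomials of length \(\ge1\), hence of weight \(\ge1\). Consequently \(\omega^n\subseteq U_{\ge n}\). Each generator \(xy\) of \(J\), with \(x,y\in\mathfrak a\), is a combination of products of two \(\mathfrak a\)-letters. Straightening such a product gives an ordered monomial of \(\mathfrak a\)-degree \(2\), since \([\mathfrak a,\mathfrak a]=0\). Hence \(J\subseteq U^{\mathfrak a}_{\ge2}\). Thus \(\omega^n+J\) lies in the span of ordered monomials that have weight \(\ge n\) or \(\mathfrak a\)-degree \(\ge2\).

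On the other hand, \(\mathfrak g\subseteq U\) is spanned by the monomials of length one. Each such monomial has weight \(\le n-1\) and \(\mathfrak a\)-degree \(\le1\), so it lies outside that set of monomials. Since the ordered monomials are linearly independent, \(\mathfrak g\cap(\omega^n+J)=0\). This is the injectivity of the canonical map \(\mathfrak g\to B\).
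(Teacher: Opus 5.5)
Your proof is correct and is essentially the paper's argument: a PBW basis adapted to the lower central series and to \(\mathfrak a\), the weight filtration, the fact that monomials with at least two \(\mathfrak a\)-factors span a two-sided ideal, and linear independence of the one-letter monomials. The only difference is that you prove just the inclusions \(\omega^n\subseteq U_{\ge n}\) and \(J\subseteq U^{\mathfrak a}_{\ge2}\), which suffice. The paper proves the corresponding equalities, and uses its ordering with the \(\mathfrak a\)-letters first to get the reverse inclusion into \(J\).
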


\begin{proof}
The \emph{lower central series} of \(\mathfrak g\) is defined by
\(\gamma_1(\mathfrak g)=\mathfrak g\) and
\(\gamma_{i+1}(\mathfrak g)=[\gamma_i(\mathfrak g),\mathfrak g]\).
Choose a basis of \(\mathfrak g\) adapted both to the lower central series
and to its intersections with \(\mathfrak a\), and order the basis elements
from \(\mathfrak a\) first.  By the Poincar\'e--Birkhoff--Witt (PBW) theorem,
the ordered monomials in this basis form a basis of \(U\); call them the
\emph{PBW monomials}.  Give a basis element \(x\) the weight
\(\max\{i:x\in\gamma_i(\mathfrak g)\}\), and a PBW monomial the sum of the
weights of its factors.  The theorem gives

\[
\omega^r=
\operatorname{span}_{\mathbb Q}
\{\text{PBW monomials of weight at least }r\}.
\tag{7.1}
\]

Indeed, \(\gamma_i(\mathfrak g)\subseteq\omega^i\), so every PBW monomial of
weight at least \(r\) belongs to \(\omega^r\).  Conversely, \(\omega^r\) is
spanned by products of at least \(r\) elements of \(\mathfrak g\), and
reordering such a product into PBW form does not decrease its weight, since
\(yx=xy+[y,x]\) and
\([\gamma_i(\mathfrak g),\gamma_j(\mathfrak g)]
 \subseteq\gamma_{i+j}(\mathfrak g)\).  Since \(\mathfrak a\) is an abelian
ideal, the PBW monomials containing at least two factors from \(\mathfrak a\)
span a two-sided ideal containing the generators of \(J\).  Conversely, the
basis elements from \(\mathfrak a\) occur first in the chosen order, so every
such monomial contains a product of two elements of \(\mathfrak a\) as a
factor and belongs to \(J\).  Hence this ideal equals \(J\).

Now \(\gamma_n(\mathfrak g)=0\).  Hence no PBW monomial with one factor
belongs to \(\omega^n\) or \(J\).  PBW linear independence gives
\(\mathfrak g\cap(\omega^n+J)=0\).
\end{proof}

\begin{proof}[Proof of Answer~\ref{ans:plotkin-stable-representations}]
For \(z\in U\), write \(\bar z\) for its image in \(B\), and let
\(\bar\omega\) be the image of \(\omega\).  For \(g\in G\), define

\[
u_g=\exp(\overline{\log g})\in B^\times.
\]

The exponential is a finite sum because \(\bar\omega^n=0\), and the BCH
formula gives \(u_{gh}=u_gu_h\).  Let \(G\) act on the additive group of
\(B\) by right multiplication by \(u_g\); this is a representation over
\(\mathbb Q\).

This action is faithful.  Indeed, if right multiplication by \(u_g\) is the
identity, evaluating it at \(1\in B\) gives \(u_g=1\).  Applying the finite
logarithm and Lemma~\ref{lem:plotkin-pbw-injection} gives \(\log g=0\), hence
\(g=1\).

For every \(g\in G\), one has \(u_g-1\in\bar\omega\).  Therefore every
product of \(n\) operators \(u_g-1\) vanishes.  Since \(\Delta_G\) is spanned
by the elements \(g-1\), this is precisely \(B\Delta_G^n=0\), so the
representation lies in \(\mathfrak S^n\).  If \(h\in H\), then
\(\log h\in\mathfrak a\) and
\((\log h)^2\in J\).  Hence

\[
u_h=1+\overline{\log h}
\quad\text{and}\quad
(u_{h_1}-1)(u_{h_2}-1)=0
\]
for all \(h_1,h_2\in H\).  Thus the restriction to \(H\) is \(2\)-step
stable because \(\Delta_H\) is spanned by \(h-1\), \(h\in H\).
\end{proof}

\begin{aipractices}

We first supplied the model (ChatGPT 5.6 Sol) with the Plotkin--Vovsi book
\cite{PlotkinVovsi1983} and its OCR transcription, asking it to reconstruct
Problem~12.3.8, resolve terminological ambiguities, and restate the problem in
modern English.  We then requested a solution, checked it against the original
hypotheses, requested separate justifications for individual steps, and asked
for a literature search.
\end{aipractices}

\section{The Cayley--Hamilton analogue for free groups}
\label{sec:problem-17-32}

Let \(F_n\) be the free group of rank \(n\).  The Cayley--Hamilton theorem
expresses \(\alpha^n(v)\) as an integral linear combination of
\(v,\alpha(v),\ldots,\alpha^{n-1}(v)\), and therefore implies the following
statement for the free abelian group
\(\mathbb Z^n\): if \(v,\alpha(v),\ldots,\alpha^n(v)\) generate
\(\mathbb Z^n\), where \(\alpha\in\operatorname{GL}(n,\mathbb Z)\), then
\(v,\alpha(v),\ldots,\alpha^{n-1}(v)\) already generate
\(\mathbb Z^n\).  Problem~17.32 asks whether the analogous assertion holds
in a free group.

\begin{Question}[{\cite[Problem~17.32]{kourovka21}}]
Let \(w\in F_n\) and \(\varphi\in\operatorname{Aut}(F_n)\).  If
\[
\langle w,\varphi(w),\ldots,\varphi^n(w)\rangle=F_n,
\]
must
\[
\langle w,\varphi(w),\ldots,\varphi^{n-1}(w)\rangle=F_n?
\]
\end{Question}

\begin{Answer}
\label{ans:problem-17-32}
No.  The assertion fails for \(n=2\).
\end{Answer}

\subsection{Solution}

\begin{proof}[Proof of Answer~\ref{ans:problem-17-32}]
Let \(F_2=\langle x,y\rangle\), set \(w=xy\), and define

\[
\varphi(x)=yx,\qquad \varphi(y)=x.
\]

This is an automorphism because
\(\varphi^{-1}(x)=y\) and \(\varphi^{-1}(y)=xy^{-1}\).  Direct calculation
gives

\[
\varphi(w)=yx^2,
\qquad
\varphi^2(w)=xyxyx=w^2x.
\]

Hence \(x=w^{-2}\varphi^2(w)\) and \(y=x^{-1}w\), so
\(\langle w,\varphi(w),\varphi^2(w)\rangle=F_2\).

Now consider the epimorphism

\[
\pi\colon F_2\longrightarrow S_3,
\qquad
\pi(x)=(1\,2\,3),
\qquad
\pi(y)=(2\,3).
\]

One has \(\pi(w)=\pi(\varphi(w))=(1\,2)\).  Therefore
\(\pi(\langle w,\varphi(w)\rangle)\) has order \(2\), whereas
\(\pi(F_2)=S_3\); hence \(\langle w,\varphi(w)\rangle\) is proper in
\(F_2\).  This gives the required counterexample for \(n=2\).
\end{proof}

\begin{aipractices}

This solution was obtained after the authors asked the model (ChatGPT 5.6 Sol) to find one more
problem in the Kourovka Notebook \cite{kourovka21}, partly to bring the total
number of solved problems to eight, and left it to work overnight.  The
counterexample was found within about two hours.
\end{aipractices}


\begin{thebibliography}{99}
\small

\bibitem[B65]{Bachmuth1965}
S.~Bachmuth,
\emph{Automorphisms of free metabelian groups},
Trans. Amer. Math. Soc. \textbf{118} (1965), 93--104.
\url{https://doi.org/10.1090/S0002-9947-1965-0180597-3}

\bibitem[BEOH24]{SmallGrp}
H.~U. Besche, B.~Eick, E.~O'Brien, and M.~Horn,
\emph{SmallGrp: The GAP Small Groups Library}, Version~1.5.4,
GAP package, 2024.
\url{https://gap-packages.github.io/smallgrp/}

\bibitem[BGSVW15]{BardakovEtAl2015}
V.~G. Bardakov, K.~Gongopadhyay, M.~Singh, A.~Vesnin, and J.~Wu,
\emph{Some problems on knots, braids, and automorphism groups},
Siberian Electron. Math. Rep. \textbf{12} (2015), 394--405.
\url{https://doi.org/10.17377/semi.2015.12.033}

\bibitem[BE16]{BenEzra2016}
D.~El-Chai Ben-Ezra,
\emph{The congruence subgroup problem for the free metabelian group on two
generators},
Groups Geom. Dyn. \textbf{10} (2016), no.~2, 583--599.
\url{https://doi.org/10.4171/GGD/357}

\bibitem[CT17]{CornulierTessera2017}
Y.~Cornulier and R.~Tessera,
\emph{Geometric presentations of Lie groups and their Dehn functions},
Publ. Math. Inst. Hautes \`Etudes Sci. \textbf{125} (2017), 79--219.
\url{https://doi.org/10.1007/s10240-016-0087-3}

\bibitem[DS96]{DarkScoppola1996}
R.~Dark and C.~M. Scoppola,
\emph{On Camina groups of prime power order},
J. Algebra \textbf{181} (1996), no.~3, 787--802.
\url{https://doi.org/10.1006/jabr.1996.0146}

\bibitem[F53]{Fox1953}
R.~H. Fox,
\emph{Free differential calculus. I. Derivation in the free group ring},
Ann. of Math. (2) \textbf{57} (1953), 547--560.
\url{https://doi.org/10.2307/1969736}

\bibitem[GAP]{GAP4}
The GAP Group,
\emph{GAP---Groups, Algorithms, and Programming}, Version~4.15.1, 2025.
\url{https://www.gap-system.org/}

\bibitem[GT97]{GuptaTimoshenko1997}
C.~K. Gupta and E.~I. Timoshenko,
\emph{Automorphic and endomorphic reducibility and primitive endomorphisms of
free metabelian groups},
Comm. Algebra \textbf{25} (1997), no.~10, 3057--3070.
\url{https://doi.org/10.1080/00927879708826040}

\bibitem[HS62]{HulanickiSwierczkowski1962}
A.~Hulanicki and S.~\'{S}wierczkowski,
\emph{On group operations other than \(xy\) or \(yx\)},
Publ. Math. Debrecen \textbf{9} (1962), 142--148.
\url{https://doi.org/10.5486/PMD.1962.9.1-2.15}

\bibitem[KM26]{kourovka21}
E.~I. Khukhro and V.~D. Mazurov (eds.),
\emph{Unsolved Problems in Group Theory: The Kourovka Notebook},
No.~21, Novosibirsk, 2026.
\url{https://kourovkanotebook.org/}

\bibitem[L02]{Lee2002}
D.~Lee,
\emph{Primitivity preserving endomorphisms of free groups},
Comm. Algebra \textbf{30} (2002), no.~4, 1921--1947.
\url{https://doi.org/10.1081/AGB-120013224}

\bibitem[LNB25]{LintonNybergBrodda2025}
M.~Linton and C.-F. Nyberg-Brodda,
\emph{The theory of one-relator groups: history and recent progress},
arXiv:2501.18306, 2025.
\url{https://arxiv.org/abs/2501.18306}

\bibitem[LW22]{LouderWilton2022}
L.~Louder and H.~Wilton,
\emph{Negative immersions for one-relator groups},
Duke Math. J. \textbf{171} (2022), no.~3, 547--594.
\url{https://doi.org/10.1215/00127094-2021-0024}

\bibitem[LW24]{LouderWilton2024}
L.~Louder and H.~Wilton,
\emph{Uniform negative immersions and the coherence of one-relator groups},
Invent. Math. \textbf{236} (2024), no.~2, 673--712.
\url{https://doi.org/10.1007/s00222-024-01246-4}

\bibitem[M49]{Malcev1949}
A.~I. Mal'cev,
\emph{Nilpotent torsion-free groups},
Izv. Akad. Nauk SSSR Ser. Mat. \textbf{13} (1949), no.~3, 201--212
(in Russian).
\url{https://www.mathnet.ru/eng/im3184}

\bibitem[MG13]{MuktibodhGhate2013}
A.~S. Muktibodh and S.~H. Ghate,
\emph{On Camina group and its generalizations},
Matemati\v{c}ki Vesnik \textbf{65} (2013), no.~2, 250--260.
\url{https://www.vesnik.math.rs/vol/mv13212.pdf}

\bibitem[MKS76]{MKS1976}
W.~Magnus, A.~Karrass, and D.~Solitar,
\emph{Combinatorial Group Theory}, 2nd ed.,
Dover, New York, 1976.

\bibitem[N61]{Neumann1961}
H.~Neumann,
\emph{On a question of Kert\'esz},
Publ. Math. Debrecen \textbf{8} (1961), 75--78.
\url{https://doi.org/10.5486/PMD.1961.8.1-2.06}

\bibitem[P77]{Plotkin1977}
B.~I. Plotkin,
\emph{Varieties of group representations},
Russian Math. Surveys \textbf{32} (1977), no.~5, 1--72.
\url{https://doi.org/10.1070/RM1977v032n05ABEH003867}

\bibitem[PV83]{PlotkinVovsi1983}
B.~I. Plotkin and S.~M. Vovsi,
\emph{Varieties of Group Representations: General Theory, Connections and
Applications},
Zinatne, Riga, 1983, 338 pp. (in Russian).

\bibitem[ABEMN25]{SONATA}
E.~Aichinger, F.~Binder, J.~Ecker, P.~Mayr, and C.~N\"obauer,
\emph{SONATA: System of Nearrings and Their Applications}, Version~2.9.7,
GAP package, 2025.
\url{https://gap-packages.github.io/sonata/}

\bibitem[S26]{Semidetnov2026}
A.~Semidetnov,
\emph{The stable commutator length of a relator is not a one-relator group
invariant},
arXiv:2608.21465, 2026.
\url{https://arxiv.org/abs/2608.21465}

\bibitem[T13]{Timoshenko2013}
E.~I. Timoshenko,
\emph{Systems of elements preserving measure on varieties of groups},
Sb. Math. \textbf{204} (2013), no.~12, 1811--1818.
\url{https://doi.org/10.1070/SM2013v204n12ABEH004361}

\bibitem[T15]{Timoshenko2015}
E.~I. Timoshenko,
\emph{Endomorphisms of free solvable groups preserving primitivity of systems
of elements},
Algebra Logic \textbf{54} (2015), no.~4, 323--335.
\url{https://doi.org/10.1007/s10469-015-9352-7}

\bibitem[TW80]{ThomasWood1980}
A.~D. Thomas and G.~V. Wood,
\emph{Group Tables}, Shiva Publishing, Nantwich, 1980.

\bibitem[TS26]{TimoshenkoShpilrain2026}
E.~I. Timoshenko and V.~Shpilrain,
\emph{Problem (M0)}, in \emph{Metabelian Groups},
The World of Group Theory, online problem list (accessed August~28, 2026).
\url{https://shpilrain.ccny.cuny.edu/gworld/problems/probmet.html}

\bibitem[WdN95]{vanderWaallDeNijs1995}
R.~W. van der Waall and C.~H.~W.~M. de Nijs,
\emph{On the embedding of a finite group as Frattini subgroup},
Bull. Belg. Math. Soc. Simon Stevin \textbf{2} (1995), 519--527.
\url{https://eudml.org/doc/231211}

\bibitem[DJMM26]{vandoorn-et-al}
W.~van Doorn, E.~Judin, P.~Monticone, and D.~Morrison,
\emph{On Some Problems from the Kourovka Notebook},
arXiv:2607.17477, 2026.
\url{https://arxiv.org/abs/2607.17477}

\end{thebibliography}
\end{document}